\newtheorem{theorem}{Theorem}[section]
\theoremstyle{plain}
\newtheorem{axiom}{Axiom}
\newtheorem{conjecture}{Conjecture}
\newtheorem{corollary}{Corollary}[section]
\newtheorem{definition}{Definition}[section]
\newtheorem{example}{Example}[section]
\newtheorem{exercise}{Exercise}
\newtheorem{lemma}{Lemma}[section]
\newtheorem{proposition}{Proposition}[section]
\newtheorem{remark}{Remark}[section]
\numberwithin{equation}{section}
\providecommand{\BOXEDSPECIAL}[4]{\hbox to #2{\raise #3\hbox to #2{\null #1\hfil}}}
\chardef\@x10\chardef\@xv60
\def\tcitime{
\def\@time{%
  \@minute\time\@hour\@minute\divide\@hour\@xv
  \ifnum\@hour<\@x 0\fi\the\@hour:%
  \multiply\@hour\@xv\advance\@minute-\@hour
  \ifnum\@minute<\@x 0\fi\the\@minute
  }}%
\def\QCTOpt[#1]#2{%
  \def\QCTOptB{#1}
  \def\QCTOptA{#2}
}
\def\QCTNOpt#1{%
  \def\QCTOptA{#1}
  \let\QCTOptB\empty
}
\def\Qct{%
  \@ifnextchar[{%
    \QCTOpt}{\QCTNOpt}
}
\def\QCBOpt[#1]#2{%
  \def\QCBOptB{#1}
  \def\QCBOptA{#2}
}
\def\QCBNOpt#1{%
  \def\QCBOptA{#1}
  \let\QCBOptB\empty
}
\def\Qcb{%
  \@ifnextchar[{%
    \QCBOpt}{\QCBNOpt}
}
\def\PrepCapArgs{%
  \ifx\QCBOptA\empty
    \ifx\QCTOptA\empty
      {}%
    \else
      \ifx\QCTOptB\empty
        {\QCTOptA}%
      \else
        [\QCTOptB]{\QCTOptA}%
      \fi
    \fi
  \else
    \ifx\QCBOptA\empty
      {}%
    \else
      \ifx\QCBOptB\empty
        {\QCBOptA}%
      \else
        [\QCBOptB]{\QCBOptA}%
      \fi
    \fi
  \fi
}
\def\GRAPHICSPS#1{%
 \ifcase\GRAPHICSTYPE
   \special{ps: #1}%
 \or
   \special{language "PS", include "#1"}%
 \fi
}%
\def\graffile#1#2#3#4#5{%
    \bgroup
    \leavevmode
    \@ifundefined{bbl@deactivate}{\def~{\string~}}{\activesoff}
    \raise -#4 \BOXTHEFRAME{%
       \BOXEDSPECIAL{#1}{#2}{#3}{#5}}%
    \egroup
}%
\def\draftbox#1#2#3#4{%
 \leavevmode\raise -#4 \hbox{%
  \frame{\rlap{\protect\tiny #1}\hbox to #2%
   {\vrule height#3 width\z@ depth\z@\hfil}%
  }%
 }%
}%
\newif\ifwasdraft
\def\GRAPHIC#1#2#3#4#5{%
 \ifnum\draft=\@ne\draftbox{#2}{#3}{#4}{#5}%
  \else\graffile{#1}{#3}{#4}{#5}{#2}%
  \fi
 }%
\def\addtoLaTeXparams#1{%
    \edef\LaTeXparams{\LaTeXparams #1}}%
\newif\ifBoxFrame \BoxFramefalse
\newif\ifOverFrame \OverFramefalse
\newif\ifUnderFrame \UnderFramefalse
\def\BOXTHEFRAME#1{%
   \hbox{%
      \ifBoxFrame
         \frame{#1}%
      \else
         {#1}%
      \fi
   }%
}
\def\doFRAMEparams#1{\BoxFramefalse\OverFramefalse\UnderFramefalse\readFRAMEparams#1\end}%
\def\readFRAMEparams#1{%
 \ifx#1\end%
  \let\next=\relax
  \else
  \ifx#1i\dispkind=\z@\fi
  \ifx#1d\dispkind=\@ne\fi
  \ifx#1f\dispkind=\tw@\fi
  \ifx#1t\addtoLaTeXparams{t}\fi
  \ifx#1b\addtoLaTeXparams{b}\fi
  \ifx#1p\addtoLaTeXparams{p}\fi
  \ifx#1h\addtoLaTeXparams{h}\fi
  \ifx#1X\BoxFrametrue\fi
  \ifx#1O\OverFrametrue\fi
  \ifx#1U\UnderFrametrue\fi
  \ifx#1w
    \ifnum\draft=1\wasdrafttrue\else\wasdraftfalse\fi
    \draft=\@ne
  \fi
  \let\next=\readFRAMEparams
  \fi
 \next
 }%
\def\IFRAME#1#2#3#4#5#6{%
      \bgroup
      \let\QCTOptA\empty
      \let\QCTOptB\empty
      \let\QCBOptA\empty
      \let\QCBOptB\empty
      #6%
      \parindent=0pt%
      \leftskip=0pt
      \rightskip=0pt
      \setbox0 = \hbox{\QCBOptA}%
      \@tempdima = #1\relax
      \ifOverFrame
          \typeout{This is not implemented yet}%
          \show\HELP
      \else
         \ifdim\wd0>\@tempdima
            \advance\@tempdima by \@tempdima
            \ifdim\wd0 >\@tempdima
               \textwidth=\@tempdima
               \setbox1 =\vbox{%
                  \noindent\hbox to \@tempdima{\hfill\GRAPHIC{#5}{#4}{#1}{#2}{#3}\hfill}\\%
                  \noindent\hbox to \@tempdima{\parbox[b]{\@tempdima}{\QCBOptA}}%
               }%
               \wd1=\@tempdima
            \else
               \textwidth=\wd0
               \setbox1 =\vbox{%
                 \noindent\hbox to \wd0{\hfill\GRAPHIC{#5}{#4}{#1}{#2}{#3}\hfill}\\%
                 \noindent\hbox{\QCBOptA}%
               }%
               \wd1=\wd0
            \fi
         \else
            \ifdim\wd0>0pt
              \hsize=\@tempdima
              \setbox1 =\vbox{%
                \unskip\GRAPHIC{#5}{#4}{#1}{#2}{0pt}%
                \break
                \unskip\hbox to \@tempdima{\hfill \QCBOptA\hfill}%
              }%
              \wd1=\@tempdima
           \else
              \hsize=\@tempdima
              \setbox1 =\vbox{%
                \unskip\GRAPHIC{#5}{#4}{#1}{#2}{0pt}%
              }%
              \wd1=\@tempdima
           \fi
         \fi
         \@tempdimb=\ht1
         \advance\@tempdimb by \dp1
         \advance\@tempdimb by -#2%
         \advance\@tempdimb by #3%
         \leavevmode
         \raise -\@tempdimb \hbox{\box1}%
      \fi
      \egroup%
}%
\def\DFRAME#1#2#3#4#5{%
 \begin{center}
     \let\QCTOptA\empty
     \let\QCTOptB\empty
     \let\QCBOptA\empty
     \let\QCBOptB\empty
     \ifOverFrame 
        #5\QCTOptA\par
     \fi
     \GRAPHIC{#4}{#3}{#1}{#2}{\z@}
     \ifUnderFrame 
        \nobreak\par\nobreak#5\QCBOptA
     \fi
 \end{center}%
 }%
\def\FFRAME#1#2#3#4#5#6#7{%
 \begin{figure}[#1]%
  \let\QCTOptA\empty
  \let\QCTOptB\empty
  \let\QCBOptA\empty
  \let\QCBOptB\empty
  \ifOverFrame
    #4
    \ifx\QCTOptA\empty
    \else
      \ifx\QCTOptB\empty
        \caption{\QCTOptA}%
      \else
        \caption[\QCTOptB]{\QCTOptA}%
      \fi
    \fi
    \ifUnderFrame\else
      \label{#5}%
    \fi
  \else
    \UnderFrametrue%
  \fi
  \begin{center}\GRAPHIC{#7}{#6}{#2}{#3}{\z@}\end{center}%
  \ifUnderFrame
    #4
    \ifx\QCBOptA\empty
      \caption{}%
    \else
      \ifx\QCBOptB\empty
        \caption{\QCBOptA}%
      \else
        \caption[\QCBOptB]{\QCBOptA}%
      \fi
    \fi
    \label{#5}%
  \fi
  \end{figure}%
 }%
\def\makeactives{
  \catcode`\"=\active
  \catcode`\;=\active
  \catcode`\:=\active
  \catcode`\'=\active
  \catcode`\~=\active
}
   \gdef\activesoff{%
      \def"{\string"}
      \def;{\string;}
      \def:{\string:}
      \def'{\string'}
    }
\def\FRAME#1#2#3#4#5#6#7#8{%
 \bgroup
 \ifnum\draft=\@ne
   \wasdrafttrue
 \else
   \wasdraftfalse%
 \fi
 \def\LaTeXparams{}%
 \dispkind=\z@
 \def\LaTeXparams{}%
 \doFRAMEparams{#1}%
 \ifnum\dispkind=\z@\IFRAME{#2}{#3}{#4}{#7}{#8}{#5}\else
  \ifnum\dispkind=\@ne\DFRAME{#2}{#3}{#7}{#8}{#5}\else
   \ifnum\dispkind=\tw@
    \edef\@tempa{\noexpand\FFRAME{\LaTeXparams}}%
    \@tempa{#2}{#3}{#5}{#6}{#7}{#8}%
    \fi
   \fi
  \fi
  \ifwasdraft\draft=1\else\draft=0\fi{}%
  \egroup
 }%
\def\TEXUX#1{"texux"}
\long\def\QQQ#1#2{%
     \long\expandafter\def\csname#1\endcsname{#2}}%
\long\def\QQA#1#2{}%
\newcommand{\QTR}[2]{\csname text#1\endcsname{#2}}
\def\EXPAND#1[#2]#3{}%
\def\NOEXPAND#1[#2]#3{}%
\def\LaTeXparent#1{}%
\def\ChildStyles#1{}%
\def\ChildDefaults#1{}%
\def\QTagDef#1#2#3{}%
  \providecommand{\UNICODE}[2][]{}
\def\QQfnmark#1{\footnotemark}
 \def\abstract{%
  \if@twocolumn
   \section*{Abstract (Not appropriate in this style!)}%
   \else \small 
   \begin{center}{\bf Abstract\vspace{-.5em}\vspace{\z@}}\end{center}%
   \quotation 
   \fi
  }%
   \def\registered{\relax\ifmmode{}\r@gistered
                    \else$\m@th\r@gistered$\fi}%
 \def\r@gistered{^{\ooalign
  {\hfil\raise.07ex\hbox{$\scriptstyle\rm\text{R}$}\hfil\crcr
  \mathhexbox20D}}}}{}%
\newdimen\theight
\def\Column{%
 \vadjust{\setbox\z@=\hbox{\scriptsize\quad\quad tcol}%
  \theight=\ht\z@\advance\theight by \dp\z@\advance\theight by \lineskip
  \kern -\theight \vbox to \theight{%
   \rightline{\rlap{\box\z@}}%
   \vss
   }%
  }%
 }%
\def\qed{%
 \ifhmode\unskip\nobreak\fi\ifmmode\ifinner\else\hskip5\p@\fi\fi
 \hbox{\hskip5\p@\vrule width4\p@ height6\p@ depth1.5\p@\hskip\p@}%
 }%
\def\miss{\hbox{\vrule height2\p@ width 2\p@ depth\z@}}%
\def\tcol#1{{\baselineskip=6\p@ \vcenter{#1}} \Column}  %
\def\newfmtname{LaTeX2e}
  \DeclareOldFontCommand{\rm}{\normalfont\rmfamily}{\mathrm}
  \DeclareOldFontCommand{\sf}{\normalfont\sffamily}{\mathsf}
  \DeclareOldFontCommand{\tt}{\normalfont\ttfamily}{\mathtt}
  \DeclareOldFontCommand{\bf}{\normalfont\bfseries}{\mathbf}
  \DeclareOldFontCommand{\it}{\normalfont\itshape}{\mathit}
  \DeclareOldFontCommand{\sl}{\normalfont\slshape}{\@nomath\sl}
  \DeclareOldFontCommand{\sc}{\normalfont\scshape}{\@nomath\sc}
  \newcounter{equationnumber}  
  \def\mathletters{%
     \addtocounter{equation}{1}
     \edef\@currentlabel{\theequation}%
     \setcounter{equationnumber}{\c@equation}
     \setcounter{equation}{0}%
     \edef\theequation{\@currentlabel\noexpand\alph{equation}}%
  }
    \def\BibTeX{{\rm B\kern-.05em{\sc i\kern-.025em b}\kern-.08em
                 T\kern-.1667em\lower.7ex\hbox{E}\kern-.125emX}}}{}%
\def\AmS{{\protect\usefont{OMS}{cmsy}{m}{n}%
                A\kern-.1667em\lower.5ex\hbox{M}\kern-.125emS}}}{}%
\def\@@eqncr{\let\@tempa\relax
    \ifcase\@eqcnt \def\@tempa{& & &}\or \def\@tempa{& &}%
      \else \def\@tempa{&}\fi
     \@tempa
     \if@eqnsw
        \iftag@
           \@taggnum
        \else
           \@eqnnum\stepcounter{equation}%
        \fi
     \fi
     \global\tag@false
     \global\@eqnswtrue
     \global\@eqcnt\z@\cr}
\def\TCItag{\@ifnextchar*{\@TCItagstar}{\@TCItag}}
\def\@TCItag#1{%
    \global\tag@true
    \global\def\@taggnum{(#1)}}
\def\@TCItagstar*#1{%
    \global\tag@true
    \global\def\@taggnum{#1}}
\begin{document}
\title[]{On invertible nonnegative Hamiltonian operator matrices}
\author{Guohai Jin, Guolin Hou, Alatancang Chen, Deyu Wu}
\address{School of Mathematical Sciences\\ Inner Mongolia University\\ Hohhot\\ 010021\\ China}
\email{ghjin2006@gmail.com (G. Jin), smshgl@imu.edu.cn (G. Hou)}
\email{alatanca@imu.edu.cn (Alatancang), wudeyu2585@163.com (D. Wu)}
\keywords{Operator matrices: Hamiltonian, Pauli, nonnegative, invertible, differential}
\subjclass[2010]{47A05, 47B25, 47E05}
\thanks{Corresponding author: Guolin Hou}
\thanks{This paper is in final form and no version of it will be submitted
for publication elsewhere.}
\thanks{Fax:+86-471-4991650}

\begin{abstract}
Some new characterizations of nonnegative Hamiltonian operator matrices are given.
Several necessary and sufficient conditions for an unbounded nonnegative Hamiltonian operator to be invertible are obtained,
so that the main results in the previously published papers are corollaries of the new theorems.
Most of all we want to stress the method of proof.
It is based on the connections between Pauli operator matrices and nonnegative Hamiltonian matrices.
\end{abstract}
\maketitle

\section{Introduction}

A \emph{Hamiltonian operator matrix} is a block operator matrix
\begin{align}\label{eq1.1}
H=\left(
    \begin{array}{cc}
      A & B \\
      C & -A^* \\
    \end{array}
  \right)
\end{align}
acting on the product space $X\times X$ of some complex Hilbert space $X$ with closed densely defined operators $A, B, C$
such that $B, C$ are self-adjoint and $H$ is densely defined \cite{AJW}.
If, in addition, $B$ and $C$ are nonnegative, then $H$ is said to be a nonnegative Hamiltonian operator matrix \cite{Ku1}.

There are a number of very interesting ways that Hamiltonian operator matrices can arise.
We mention a few.
First, many linear boundary value problems in mathematical physics can be written as the Hamiltonian equation $\dot{u}=Hu+f$ where
$H$ is a Hamiltonian operator matrix,
so that the solvability of the original boundary value problem is reduced to spectral properties of the operator $H$,
see e.g. \cite{Atk, Krall2002} for ordinary differential equations,
\cite{CM, Olver, ZAZ} for partial differential equations, and \cite{LX,YZL,Zhong,ZZ} for applications of Hamiltonian operators in elasticity.
Second, Hamiltonian operator matrices also arise in theory of optimal control.
It is well known that the solutions $U$ of the Riccati equation
$$A^*U+UA+UBU-C=0$$
are in one-to-one correspondence with graph subspaces that are invariant under the operator matrix $H$ given by \eqref{eq1.1},
where $A, B, C$ are unbounded linear operators and $B, C$ are nonnegative,
see e.g. \cite{TW, Wyss2011} and the references therein.
There have been a lot of papers on spectral properties of Hamiltonian operator matrices,
see e.g. \cite{AHF,AD1,AKK,KZ,Ku1,KM,LRW,QC1,QC2,SS,WA}.
There are many papers \cite{Deni, Ku1, WA} devoted to the invertibility of a nonnegative Hamiltonian operator matrix
since it is sometimes important in the investigation of Hamiltonian equations \cite{Ku1} and, moreover,
for an invertible Hamiltonian operator matrix $H$ we have $JH=(JH)^*$
where $J=\left(
\begin{array}{cc}
0 & I \\
-I & 0 \\
\end{array}
\right)$
is the unit symplectic operator matrix \cite[p. 11]{YZL},
so that the spectral theorems hold \cite{AHF}.
Invertible Hamiltonian operator matrices also play an important role in spectral theory of periodic waves for infinite-dimensional Hamiltonian systems,
see e.g. \cite{HK} and references therein.

The purpose of this paper is to establish some necessary and sufficient conditions
for a nonnegative Hamiltonian operator matrix to be invertible.
Let us list and comment on some main results that are previously published.
Kurina \cite{Ku1} obtained the following fundamental theorem.
\begin{theorem}(\cite{Ku1})\label{th1.1}
Let $H$ be a nonnegative Hamiltonian operator matrix given by \eqref{eq1.1} with bounded off-diagonal entries $B$ and $C$
such that $0\in\rho(A)\cup(\rho(B)\cap\rho(C))$.
Then $0\in\rho(H)$.
\end{theorem}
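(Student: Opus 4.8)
The plan is to prove injectivity of $H$ in general and then to treat the two hypotheses $0\in\rho(A)$ and $0\in\rho(B)\cap\rho(C)$ by separate arguments. For injectivity, suppose $u=\binom{x}{y}\in D(H)$ satisfies $Hu=0$. Pairing $Hu$ with $\binom{y}{x}$ and using $\langle A^{*}y,x\rangle=\overline{\langle Ax,y\rangle}$ (legitimate since $x\in D(A)$), the $A$-terms cancel in the real part and one gets
\[
0=\operatorname{Re}\left\langle Hu,\begin{pmatrix}y\\x\end{pmatrix}\right\rangle=\langle By,y\rangle+\langle Cx,x\rangle .
\]
Since $B,C\ge 0$, this forces $B^{1/2}y=0=C^{1/2}x$, hence $By=Cx=0$, and then $Ax=-By=0$ and $A^{*}y=Cx=0$. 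If $0\in\rho(A)$, injectivity of $A$ and $A^{*}$ gives $x=y=0$; if $0\in\rho(B)\cap\rho(C)$, injectivity of $B$ and $C$ gives $x=y=0$. The same computation will be applied to $H^{*}$ below.

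Suppose first $0\in\rho(A)$, so that $A^{-1}$ is bounded. I would use the block factorisation
\[
H=\begin{pmatrix}I&0\\CA^{-1}&I\end{pmatrix}\begin{pmatrix}A&B\\0&S\end{pmatrix},\qquad S:=-A^{*}-CA^{-1}B,
\]
in which $CA^{-1}$ and $CA^{-1}B$ are bounded; one checks that both factors have domain $D(A)\times D(A^{*})=D(H)$, that the identity holds there, and that the left factor is boundedly invertible, so it remains to prove $0\in\rho(S)$. Write $S=-A^{*}(I+K)$ with $K:=(A^{*})^{-1}CA^{-1}B$, which is bounded and maps $X$ into $D(A^{*})$ (so the domains again match); thus it suffices to invert $I+K$. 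The crude estimate $\|K\|\le\|A^{-1}\|^{2}\|B\|\,\|C\|$ is useless when $B,C$ are large, so instead I would use nonnegativity: factoring $K=\bigl[(A^{*})^{-1}C^{1/2}\bigr]\bigl[C^{1/2}A^{-1}B\bigr]$ and using that $XY$ and $YX$ have the same nonzero spectrum,
\[
\sigma(K)\setminus\{0\}=\sigma\bigl(C^{1/2}A^{-1}B(A^{*})^{-1}C^{1/2}\bigr)\setminus\{0\}=\sigma(PP^{*})\setminus\{0\}\subseteq[0,\infty),\qquad P:=C^{1/2}A^{-1}B^{1/2}.
\]
Hence $\sigma(K)\subseteq[0,\infty)$, so $-1\in\rho(K)$ and $I+K$ is boundedly invertible; then $S=-A^{*}(I+K)$ is boundedly invertible, whence so is the upper-triangular factor, and therefore so is $H$.

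Suppose now $0\in\rho(B)\cap\rho(C)$, so $\langle By,y\rangle\ge\|B^{-1}\|^{-1}\|y\|^{2}$ and $\langle Cx,x\rangle\ge\|C^{-1}\|^{-1}\|x\|^{2}$. The identity of the first paragraph then gives, for every $u=\binom{x}{y}\in D(H)$,
\[
c\,\|u\|^{2}\le\langle By,y\rangle+\langle Cx,x\rangle=\operatorname{Re}\left\langle Hu,\begin{pmatrix}y\\x\end{pmatrix}\right\rangle\le\|Hu\|\,\|u\|,\qquad c:=\min\bigl(\|B^{-1}\|^{-1},\|C^{-1}\|^{-1}\bigr)>0,
\]
so $H$ is bounded below, hence injective with closed range. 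Since $B,C$ are bounded, $H^{*}=\begin{pmatrix}A^{*}&C\\B&-A\end{pmatrix}$ is again a nonnegative Hamiltonian operator matrix, with corner operator $A^{*}$ satisfying the same hypothesis $0\in\rho(A^{*})\cup(\rho(C)\cap\rho(B))$; hence $\ker H^{*}=\{0\}$ by the argument of the first paragraph, so $R(H)=(\ker H^{*})^{\perp}=X\times X$ and $0\in\rho(H)$.

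The main obstacle is the case $0\in\rho(A)$: boundedness of $B$ and $C$ does not make $S$ a small perturbation of $-A^{*}$, and the step that rescues the argument is the spectral identity $\sigma(K)\setminus\{0\}=\sigma(PP^{*})\setminus\{0\}$, which is precisely what converts the nonnegativity of $B$ and $C$ into $\sigma(K)\subseteq[0,\infty)$. Everything else — the two factorisations, the domain bookkeeping, and the closed-range argument — is routine.
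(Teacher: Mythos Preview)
Your proof is correct, and for the case $0\in\rho(B)\cap\rho(C)$ it coincides with the paper's argument: the identity $\operatorname{Re}\langle Hu,(y,x)^t\rangle=\langle By,y\rangle+\langle Cx,x\rangle$ is exactly what the paper uses (Lemma~\ref{lem2.1}(1) and Proposition~\ref{prop4.5}), and your passage from ``bounded below'' to ``invertible'' via $H^*=\begin{pmatrix}A^*&C\\B&-A\end{pmatrix}$ is just the statement $JH=(JH)^*$ unwound, which is the paper's Theorem~\ref{th3.1}(3)$\Rightarrow$(1).

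For the case $0\in\rho(A)$, however, your route is genuinely different. The paper does \emph{not} invert the Schur complement; instead it again shows that $H$ is bounded from below (Lemma~\ref{lem2.1}(2), citing \cite{WA,TW}) and then invokes the abstract equivalence ``$J$-self-adjoint $+$ bounded below $\Rightarrow$ invertible'' of Theorem~\ref{th3.1}. Your argument bypasses this entirely: you factor $H$ through $S=-A^*(I+K)$ and use the spectral identity $\sigma(XY)\setminus\{0\}=\sigma(YX)\setminus\{0\}$ to rewrite $\sigma(K)\setminus\{0\}$ as $\sigma(PP^*)\setminus\{0\}$ with $P=C^{1/2}A^{-1}B^{1/2}$, forcing $\sigma(K)\subset[0,\infty)$ and hence $-1\in\rho(K)$. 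This is an elegant and self-contained way to exploit the nonnegativity of $B$ and $C$; it yields an explicit formula for $H^{-1}$ and avoids any appeal to $J$-self-adjointness or maximal dissipativity. The paper's approach, on the other hand, is more uniform (both cases reduce to the single criterion of Theorem~\ref{th3.1}) and extends immediately to unbounded $B,C$ once $JH=(JH)^*$ is secured, whereas your Schur factorisation relies on $CA^{-1}B$ being bounded.
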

Note that a Hamiltonian operator matrix $H$ with bounded off-diagonal entries satisfies $JH=(JH)^*$.

Denisov \cite{Deni} extended the case $0\in\rho(B)\cap\rho(C)$ of Kurina's
to nonnegative Hamiltonian operator matrices with unbounded off-diagonal entries.
\begin{theorem}(\cite[Theorem 2]{Deni})\label{th1.2}
Let $H$ be a nonnegative Hamiltonian operator matrix given by \eqref{eq1.1} such that $0\in\rho(B)\cap\rho(C)$.
If $iH$ is maximal $\sigma_1$-dissipative where
$\sigma_1=\left(
            \begin{array}{cc}
              0 & I \\
              I & 0 \\
            \end{array}
          \right)$,
then $0\in\rho(H)$.
\end{theorem}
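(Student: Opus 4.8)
The plan is to use the Pauli matrix $\sigma_1$ to convert the Hamiltonian structure into an ordinary accretivity statement, after which the conclusion becomes a standard fact about m-accretive operators. Set $S:=\sigma_1H=\left(\begin{array}{cc} C & -A^{*} \\ A & B \end{array}\right)$; since $\sigma_1$ is a unitary involution, $D(S)=D(H)$ and $0\in\rho(H)$ if and only if $0\in\rho(S)$, so it suffices to prove $0\in\rho(S)$. The first step is to record, for $x=(x_1,x_2)\in D(H)$, the identity $\operatorname{Re}\langle Sx,x\rangle=\langle Cx_1,x_1\rangle+\langle Bx_2,x_2\rangle$, which follows because $\langle Ax_1,x_2\rangle-\langle A^{*}x_2,x_1\rangle$ is purely imaginary. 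In Krein-space language this is exactly the (automatic) $\sigma_1$-dissipativity of $iH$; the substance of the hypothesis is the word \emph{maximal}, which is equivalent to $S=\sigma_1H$ being m-accretive, because an extension $\widetilde H\supseteq H$ renders $i\widetilde H$ $\sigma_1$-dissipative precisely when it renders $\sigma_1\widetilde H\supseteq S$ accretive, so maximality transfers across $\sigma_1$.

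Next I would bring in $0\in\rho(B)\cap\rho(C)$. Since $B$ and $C$ are nonnegative and boundedly invertible, their spectra are bounded away from $0$, so $B\ge\varepsilon I$ and $C\ge\delta I$ for some $\varepsilon,\delta>0$; hence $\operatorname{Re}\langle Sx,x\rangle\ge\min(\varepsilon,\delta)\,\|x\|^{2}$ for every $x\in D(S)$, i.e.\ the closure of the numerical range of $S$ is contained in the half-plane $\{\,z:\operatorname{Re}z\ge\min(\varepsilon,\delta)\,\}$. Because $S$ is m-accretive, the open left half-plane lies in $\rho(S)$; and for a closed operator each connected component of the complement of the closure of the numerical range is either entirely in $\rho(S)$ or entirely in $\sigma(S)$ (residual). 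The component $\{\operatorname{Re}z<\min(\varepsilon,\delta)\}$ meets $\rho(S)$, hence lies in it, and it contains $0$. Therefore $0\in\rho(S)=\rho(\sigma_1H)$, and $0\in\rho(H)$.

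If one prefers to avoid quoting the numerical-range localization of the spectrum of an m-accretive operator, the same ingredients do the job directly. The lower bound gives $\|Hx\|=\|Sx\|\ge\min(\varepsilon,\delta)\,\|x\|$ by Cauchy--Schwarz, so $H$ is bounded below, and maximal dissipativity forces $H$ to be closed, whence $\operatorname{ran}H$ is closed. For density of the range, let $z\perp\operatorname{ran}H$, i.e.\ $z\in\ker H^{*}$. If $\sigma_1z\notin D(H)$, then extending $iH$ by the zero action on $\operatorname{span}\{\sigma_1z\}$ produces a proper extension that is still $\sigma_1$-dissipative, because the cross term vanishes ($\langle iHx,z\rangle=0$ for $x\in D(H)$); this contradicts maximality, so $\sigma_1z\in D(H)$. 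Then $\langle H\sigma_1z,z\rangle=0$ reads $\operatorname{Re}\langle S\sigma_1z,\sigma_1z\rangle=0$, which by the lower bound forces $\sigma_1z=0$, i.e.\ $z=0$. Thus $\operatorname{ran}H$ is dense and closed, so $H$ is bijective with bounded inverse.

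The only genuinely load-bearing use of the hypothesis is the word \emph{maximal}: every other step is a one-line computation with $\sigma_1$ together with the elementary fact that a nonnegative boundedly invertible operator is uniformly positive. In the first route maximality enters through ``m-accretive'', which is what lets the numerical range control the resolvent set; in the second, through the extension argument that turns a nontrivial cokernel of $H$ into a forbidden proper $\sigma_1$-dissipative extension. I expect the step most in need of care to be the identification of ``$iH$ maximal $\sigma_1$-dissipative'' with ``$\sigma_1H$ m-accretive'' — routine, but sensitive to the sign conventions used for $\sigma_1$-dissipativity.
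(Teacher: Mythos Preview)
Your argument is correct. Both routes you sketch go through: the identification ``$iH$ maximal $\sigma_1$-dissipative $\Leftrightarrow$ $\sigma_1 H$ maximal accretive'' is immediate from the one-line computation $\operatorname{Im}\langle\sigma_1(iH)x,x\rangle=\operatorname{Re}\langle\sigma_1 Hx,x\rangle$, and in a Hilbert space a densely defined maximal accretive operator is m-accretive (Phillips), so your worry in the last paragraph is unfounded. After that, $0\in\rho(B)\cap\rho(C)$ gives the uniform lower bound and the conclusion follows either by shifting ($S-c$ is m-accretive, hence $\{\operatorname{Re}z<c\}\subset\rho(S)$) or by your hands-on cokernel argument.

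The paper, however, does not argue this way. It derives Theorem~1.2 as a corollary of the structural equivalence Theorem~3.1: first Lemma~2.1 records (via exactly the same identity you use) that $0\in\rho(B)\cap\rho(C)$ forces $H$ to be bounded from below; then, rather than passing to m-accretivity of $\sigma_1 H$, the paper shows in Proposition~4.3 that ``$iH$ maximal $\sigma_1$-dissipative'' is equivalent to ``$JH$ self-adjoint'' (the proof there is a short but non-obvious piece of Pauli-matrix algebra using the $\sigma_2$-symmetry of $iH$ and the relation $\sigma_1\sigma_2=-\sigma_2\sigma_1$); finally, $JH$ self-adjoint together with $H$ bounded from below gives $0\in\rho(JH)=\rho(H)$ by the elementary fact that self-adjoint operators have no residual spectrum. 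So the two proofs share step~1 but diverge at the use of maximality: you translate it into semigroup language (m-accretive), the paper translates it into self-adjointness of $JH$. Your route is shorter and more self-contained for this single statement; the paper's detour is the point of the paper, since Proposition~4.3 feeds into the six-way equivalence of Theorem~3.1 and the many corollaries that follow.
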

Note that ``$iH$ is maximal $\sigma_1$-dissipative" can be replaced by ``$JH=(JH)^*$" in the statement of Theorem \ref{th1.2},
see Proposition \ref{prop4.3} in Section \ref{sec4}.

Wu and Alatancang \cite{WA} extended the main result of Kurina's
to nonnegative Hamiltonian operator matrices with unbounded off-diagonal entries.
\begin{theorem}(\cite[Theorem 3.1, Proposition 3.3]{WA})\label{th1.3}
Let $H$ be a nonnegative Hamiltonian operator matrix given by \eqref{eq1.1} with $\mathcal{D}(H)=\mathcal{D}(A)\times\mathcal{D}(A^*)$
such that $0\in\rho(A)\cup(\rho(B)\cap\rho(C))$.
If  $JH=(JH)^*$, then $0\in\rho(H)$.
\end{theorem}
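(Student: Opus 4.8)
The plan is to split on the alternative in the hypothesis, treating the cases $0\in\rho(B)\cap\rho(C)$ and $0\in\rho(A)$ separately. The first case needs nothing new: by the remark following Theorem~\ref{th1.2} (see Proposition~\ref{prop4.3}), for a nonnegative Hamiltonian operator matrix the condition $JH=(JH)^{*}$ is equivalent to ``$iH$ is maximal $\sigma_{1}$-dissipative'', so all hypotheses of Denisov's Theorem~\ref{th1.2} are in force and $0\in\rho(H)$. Hence the work is entirely in the case $0\in\rho(A)$, where --- in contrast to Kurina's Theorem~\ref{th1.1} --- the entries $B,C$ may be unbounded, and one has to exploit $\mathcal{D}(H)=\mathcal{D}(A)\times\mathcal{D}(A^{*})$.

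In that case I would first unwind the domain hypothesis: $\mathcal{D}(H)=\mathcal{D}(A)\times\mathcal{D}(A^{*})$ forces $\mathcal{D}(A)\subseteq\mathcal{D}(C)$ and $\mathcal{D}(A^{*})\subseteq\mathcal{D}(B)$, so by the closed graph theorem $P:=CA^{-1}$ and $Q:=B(A^{*})^{-1}$ are bounded, as are $C^{1/2}A^{-1}$, $B^{1/2}(A^{*})^{-1}$ and hence $(A^{*})^{-1}CA^{-1}=(C^{1/2}A^{-1})^{*}(C^{1/2}A^{-1})\ge 0$. Then I would use the Frobenius--Schur factorization
$$H=\begin{pmatrix}I&0\\ CA^{-1}&I\end{pmatrix}\begin{pmatrix}A&0\\ 0&-(A^{*}+CA^{-1}B)\end{pmatrix}\begin{pmatrix}I&\overline{A^{-1}B}\\ 0&I\end{pmatrix},$$
whose two outer factors are bounded and boundedly invertible; since $\mathcal{D}(H)=\mathcal{D}(A)\times\mathcal{D}(A^{*})$ the domains on both sides coincide exactly, so this is a genuine operator identity. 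Therefore $0\in\rho(H)$ iff $0\in\rho(S)$, where $S:=A^{*}+CA^{-1}B$ on $\mathcal{D}(A^{*})$, and since $S(A^{*})^{-1}=I+CA^{-1}B(A^{*})^{-1}=I+PQ$ is bounded while $A^{*}$ is boundedly invertible, this reduces to: $I+PQ$ is invertible on $X$.

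To finish, since $\sigma(PQ)\setminus\{0\}=\sigma(QP)\setminus\{0\}$ it suffices to show $-1\notin\sigma(QP)$, where $QP=B\,[(A^{*})^{-1}CA^{-1}]$ is the product of the nonnegative self-adjoint operator $B$ with the bounded nonnegative operator $(A^{*})^{-1}CA^{-1}$; the spectrum of such a product lies in $[0,\infty)$. I expect this last point to be the real obstacle. When both factors are bounded the inclusion is classical (conjugate by a square root of one factor), but here $B$ is unbounded and the range of $\bigl((A^{*})^{-1}CA^{-1}\bigr)^{1/2}$ need not lie in $\mathcal{D}(B)$, so the naive symmetrization fails; one must instead realize $QP$, up to a legitimate (possibly unbounded) similarity, as an honest self-adjoint nonnegative operator of the form $T^{*}T$ without altering the nonzero spectrum. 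This, together with the domain bookkeeping that makes the factorization above rigorous, is exactly where the structural lemmas on Pauli operator matrices --- and, through them, the full strength of $JH=(JH)^{*}$ --- are needed; I would develop those lemmas first and then feed them into the scheme above.
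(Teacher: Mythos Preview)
Your route in the case $0\in\rho(A)$ is genuinely different from the paper's, and the gap you flag is real and not filled by what you propose. The paper never touches the Schur complement or the spectrum of $QP$ here. Instead it separates the two issues cleanly: first, Lemma~\ref{lem2.1}(2) gives a direct estimate showing $H$ is bounded from below whenever $\mathcal D(H)=\mathcal D(A)\times\mathcal D(A^*)$ and $0\in\rho(A)$ (no $J$-self-adjointness needed for this step); second, the hypothesis $JH=(JH)^*$ is used only to upgrade ``bounded from below'' to ``invertible'', via the one-line observation that a self-adjoint operator $JH$ with $\|JHx\|=\|Hx\|\ge\delta\|x\|$ has closed dense range and is therefore bijective. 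That is the entire content of the implication $(3)\Rightarrow(1)$ in Theorem~\ref{th3.1}, and Theorem~\ref{th1.3} drops out immediately.

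By contrast, your factorization reduces the whole question to invertibility of $I+PQ$, which forces you to prove the spectral inclusion $-1\notin\sigma(QP)$ for the bounded operator $QP=B\cdot(A^*)^{-1}CA^{-1}$ with $B$ unbounded. You are right that the naive conjugation by $((A^*)^{-1}CA^{-1})^{1/2}$ fails, and your fallback---``realize $QP$ up to an unbounded similarity as $T^*T$ using structural lemmas on Pauli operator matrices''---is not a proof. The Pauli machinery actually developed in Section~\ref{sec4} (Propositions~\ref{prop4.1}--\ref{prop4.5}) concerns the equivalence of $J$-self-adjointness with maximal $\sigma_1$-dissipativity and the structure of $\mathcal N(H)$; none of it produces a similarity of $QP$ to a nonnegative self-adjoint operator or otherwise controls $\sigma(QP)$. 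So the place you hope to spend the hypothesis $JH=(JH)^*$ does not match where it is in fact spent. If you want to keep your Schur-complement scheme, the honest way to close it is to abandon the spectral claim and instead note that your factorization already gives $\mathcal N(H)=\{0\}$, then invoke $JH=(JH)^*$ exactly as the paper does to get surjectivity---at which point the factorization has bought you nothing over Lemma~\ref{lem2.1}.
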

We point out that there is a gap in the proof of \cite[Theorem 3.1]{WA}
for the equality $JH=(JH)^*$,
so that Theorem $3.1$ or Proposition $3.3$ in \cite{WA} holds under an additional hypothesis that $JH=(JH)^*$.

Wu and Alatancang \cite{WA} also obtained the following theorem.
\begin{theorem}\label{th1.4}(\cite[Theorem 3.2]{WA})
Let $H$ be a nonnegative Hamiltonian operator given by \eqref{eq1.1} with $\mathcal{D}(H)=\mathcal{D}(C)\times\mathcal{D}(B)$
such that $0\in\rho(A)$.
If the operators $A(C-\lambda)^{-1}, A^*(B-\lambda)^{-1}$ are compact for some negative number $\lambda$,
then $0\in\rho(H)$.
\end{theorem}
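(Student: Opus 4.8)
The plan is to use the connection, mediated by the Pauli matrix $\sigma_{1}=\left(\begin{array}{cc}0 & I\\ I & 0\end{array}\right)$, between $H$ and the \emph{accretive} operator matrix $\mathcal{T}:=\sigma_{1}H=\left(\begin{array}{cc}C & -A^{*}\\ A & B\end{array}\right)$ on $\mathcal{D}(C)\times\mathcal{D}(B)$; since $\sigma_{1}$ is a bounded involution on $X\times X$, $0\in\rho(H)$ if and only if $0\in\rho(\mathcal{T})$. First I would record what the hypotheses give: $\mathcal{D}(H)=\mathcal{D}(C)\times\mathcal{D}(B)$ forces $\mathcal{D}(C)\subseteq\mathcal{D}(A)$ and $\mathcal{D}(B)\subseteq\mathcal{D}(A^{*})$; $0\in\rho(A)$ entails $0\in\rho(A^{*})$; and, with $\mu:=-\lambda>0$, the operators $A(C+\mu)^{-1}$ and $A^{*}(B+\mu)^{-1}$ are everywhere defined and (by assumption) compact. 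Since $B,C\ge0$ we have $\operatorname{Re}\langle\mathcal{T}u,u\rangle=\langle Cx,x\rangle+\langle By,y\rangle\ge0$ for $u=(x,y)$, so $\mathcal{T}+\mu$ is bounded below by $\mu$ and in particular injective.

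Next I would prove that $\mathcal{T}+\mu$ is boundedly invertible, i.e.\ $-\mu\in\rho(\mathcal{T})$. The equation $(\mathcal{T}+\mu)(x,y)=(g,f)$ reads $(C+\mu)x-A^{*}y=g$, $Ax+(B+\mu)y=f$; writing $x=(C+\mu)^{-1}\tilde{x}$ and eliminating $y=(B+\mu)^{-1}(f-Ax)$ reduces it to the single equation
\[
\bigl(I+A^{*}(B+\mu)^{-1}A(C+\mu)^{-1}\bigr)\tilde{x}=g+A^{*}(B+\mu)^{-1}f\qquad\text{on }X .
\]
Because $A^{*}(B+\mu)^{-1}A(C+\mu)^{-1}$ is a product of two compact operators, $I+A^{*}(B+\mu)^{-1}A(C+\mu)^{-1}$ is Fredholm of index zero; it is injective, since a nonzero $\tilde x$ in its kernel yields, via $x=(C+\mu)^{-1}\tilde x$ and $y=-(B+\mu)^{-1}Ax$, a nonzero element of $\ker(\mathcal{T}+\mu)$. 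Hence it is boundedly invertible, and back-substitution produces a bounded, everywhere-defined inverse $(\mathcal{T}+\mu)^{-1}$ in explicit block form. I would then check that $(\mathcal{T}+\mu)^{-1}$ is \emph{compact}: each of its four blocks is a composition of bounded operators containing at least one of $A(C+\mu)^{-1}$, $A^{*}(B+\mu)^{-1}$, $(C+\mu)^{-1}=A^{-1}\bigl(A(C+\mu)^{-1}\bigr)$, or $(B+\mu)^{-1}=(A^{*})^{-1}\bigl(A^{*}(B+\mu)^{-1}\bigr)$, all of which are compact. Therefore $\mathcal{T}$ is closed with compact resolvent, so $\sigma(\mathcal{T})$ is a set of eigenvalues and $0\in\rho(\mathcal{T})$ as soon as $\mathcal{T}$ is injective.

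It remains to see that $\mathcal{T}$ (equivalently $H$) is injective. This is a short energy estimate: from $H(x,y)=0$, pairing $Ax+By=0$ with $y$ and $Cx-A^{*}y=0$ with $x$ gives $\langle Cx,x\rangle=\overline{\langle Ax,y\rangle}=-\overline{\langle By,y\rangle}=-\langle By,y\rangle$; since the left side is $\ge0$ and the right side is $\le0$, both vanish, whence $Cx=0$ and $By=0$ (using $B,C\ge0$), then $Ax=0$ and $A^{*}y=0$, and finally $x=y=0$ because $0\in\rho(A)$ and $0\in\rho(A^{*})$. Combined with the previous paragraph this gives $0\in\rho(\mathcal{T})$ and hence $0\in\rho(H)$.

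The hard part is the compactness of $(\mathcal{T}+\mu)^{-1}$, and this is exactly where the two hypotheses cooperate: the ``diagonal'' contributions to the inverse carry factors $(C+\mu)^{-1}$ and $(B+\mu)^{-1}$, which are not assumed compact and become so only after being routed through the compact operators $A(C+\mu)^{-1}$ and $A^{*}(B+\mu)^{-1}$ by means of $0\in\rho(A)$. A recurring technical nuisance is that $H$ lives on $\mathcal{D}(C)\times\mathcal{D}(B)$ rather than $\mathcal{D}(A)\times\mathcal{D}(A^{*})$, so at every step one must verify that the vectors constructed lie in the right domains — e.g.\ that the range of $(C+\mu)^{-1}$ sits inside $\mathcal{D}(C)\subseteq\mathcal{D}(A)$, which is what makes $A(C+\mu)^{-1}$ an everywhere-defined (and then compact) operator in the first place.
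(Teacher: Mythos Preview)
Your argument is correct. The paper, however, takes a different (more modular) path: it first deduces from the hypotheses that $(B-\lambda)^{-1}$ and $(C-\lambda)^{-1}$ are compact (via the same trick $(C-\lambda)^{-1}=A^{-1}\!\bigl(A(C-\lambda)^{-1}\bigr)$ that you use), that $A$ is $C$-compact and $A^{*}$ is $B$-compact, and that the kernel conditions $\mathcal{N}(A)\cap\mathcal{N}(C)=\mathcal{N}(B)\cap\mathcal{N}(A^{*})=\{0\}$ hold; it then applies Theorem~\ref{th3.7}, whose proof writes $H$ as a Fredholm ``main part'' plus a relatively compact perturbation (Kato's stability theorem) and combines this with the abstract characterization Theorem~\ref{th3.1} (invertible $\Leftrightarrow$ $J$-self-adjoint, closed range, trivial kernel). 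So the paper passes through $J$-self-adjointness and Fredholmness of $H$ itself, whereas you go through the accretive operator $\mathcal{T}=\sigma_{1}H$, build its resolvent at $-\mu$ explicitly by a Schur-complement/Fredholm-alternative computation, check that this resolvent is compact, and finish with the spectral theory of operators with compact resolvent. Both routes share the same two load-bearing observations --- the energy identity giving injectivity (the paper's Proposition~\ref{prop4.5}) and the factorisation $(C+\mu)^{-1}=A^{-1}\bigl(A(C+\mu)^{-1}\bigr)$ --- but your argument is more self-contained and avoids the $J$-self-adjointness machinery, while the paper's approach yields the result as one instance of a broader equivalence that applies uniformly to all the invertibility criteria in Section~\ref{sec3}.
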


We conclude that all the theorems listed above are corollaries of the new theorems in Section \ref{sec3}.
To do this, we shall give some new characterizations of nonnegative Hamiltonian operator matrices in Section \ref{sec4}.

\section{preliminaries}

Throughout the remainder of this paper $X, Y$ will denote complex Hilbert spaces.
\begin{definition}(\cite[p. 3]{GK})
Let $T:\mathcal{D}(T)\subset X\to X$ be a linear operator.
$T$ is said to be invertible if it has an everywhere defined bounded inverse.
\end{definition}
Obviously, an invertible linear operator is closed and, moreover,
if $H$ is closed, then $H$ is invertible if and only if $0\in\rho(H)$.

\begin{definition}(\cite[p. 28, p. 41]{Hal})
Let $T:\mathcal{D}(T)\subset X\to Y$ be a linear operator.
$T$ is said to be bounded from below if there exists a positive number $\delta$
such that $\|Tx\|\geq\delta\|x\|$ for all $x\in\mathcal{D}(T)$.
If $X=Y$, the approximate point spectrum of $T$ is defined as
$$\sigma_{app}(T):=\{\lambda\in\mathbb C~|~(T-\lambda)\mbox{~is not bounded from below}\}.$$
\end{definition}

The following two lemmas will paly a role in the proofs of some theorems in the next section.
\begin{lemma}\label{lem2.1}
Let $H=\left(
         \begin{array}{cc}
           A & B \\
           C & -A^* \\
         \end{array}
       \right)
$ be a nonnegative Hamiltonian operator matrix on $X\times X$.
Then $H$ is bounded from below if one of the following holds:
\begin{enumerate}
\item $0\in\rho(B)\cap\rho(C)$,
\item $\mathcal{D}(H)=\mathcal{D}(A)\times\mathcal{D}(A^*)$ and $0\in\rho(A)$.
\end{enumerate}
\end{lemma}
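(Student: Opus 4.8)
The plan is to prove the two cases separately, in each case producing an explicit lower bound $\|H(x,y)^{\mathsf T}\|\ge \delta\,\|(x,y)^{\mathsf T}\|$ for all $(x,y)^{\mathsf T}\in\mathcal D(H)$. Write $H(x,y)^{\mathsf T}=(Ax+By,\,Cx-A^*y)^{\mathsf T}$ and denote $u=Ax+By$, $v=Cx-A^*y$. The starting observation, valid for any nonnegative Hamiltonian operator matrix, is the pairing identity
\begin{align*}
\langle u,y\rangle-\langle x,v\rangle=\langle Ax,y\rangle+\langle By,y\rangle-\langle Cx,x\rangle+\langle x,A^*y\rangle=\langle By,y\rangle+\langle Cx,x\rangle,
\end{align*}
using $\langle Ax,y\rangle+\langle x,A^*y\rangle$... wait, more carefully $\langle Ax,y\rangle=\langle x,A^*y\rangle$, so the cross terms give $2\,\mathrm{Re}\langle Ax,y\rangle$; the cleaner route is to compute $\mathrm{Re}(\langle u,y\rangle-\langle x,v\rangle)=\langle By,y\rangle+\langle Cx,x\rangle\ge 0$, since $B,C\ge 0$. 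Thus $\langle By,y\rangle+\langle Cx,x\rangle\le \|u\|\,\|y\|+\|x\|\,\|v\|\le \|H(x,y)^{\mathsf T}\|\cdot\|(x,y)^{\mathsf T}\|$. This is the key inequality that converts the nonnegativity of $B,C$ into control of $x$ and $y$.

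For case (1), $0\in\rho(B)\cap\rho(C)$ means $B,C$ are boundedly invertible, so $\langle By,y\rangle\ge \|B^{-1}\|^{-1}\|y\|^2$ and $\langle Cx,x\rangle\ge \|C^{-1}\|^{-1}\|x\|^2$; setting $c=\min\{\|B^{-1}\|^{-1},\|C^{-1}\|^{-1}\}>0$ we get $c\,(\|x\|^2+\|y\|^2)\le \|H(x,y)^{\mathsf T}\|\cdot\|(x,y)^{\mathsf T}\|$, i.e. $c\,\|(x,y)^{\mathsf T}\|^2\le \|H(x,y)^{\mathsf T}\|\cdot\|(x,y)^{\mathsf T}\|$, whence $\|H(x,y)^{\mathsf T}\|\ge c\,\|(x,y)^{\mathsf T}\|$ and $H$ is bounded below with $\delta=c$. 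This case is essentially immediate from the pairing identity.

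For case (2), with $\mathcal D(H)=\mathcal D(A)\times\mathcal D(A^*)$ and $0\in\rho(A)$ (so also $0\in\rho(A^*)$, with $\|A^{-1}\|=\|(A^*)^{-1}\|=:m$), I expect to argue as follows. From $Ax=u-By$ and $A^*y=Cx-v$ I would like to bound $\|x\|$ and $\|y\|$, but $B,C$ need not be bounded, so I cannot directly use $\|x\|=\|A^{-1}Ax\|\le m\|u-By\|$. Instead combine the two: $\|x\|\le m\|u-By\|$ and $\|y\|\le m\|Cx-v\|$ are replaced by working with the quadratic-form inequality. The trick I would try: the pairing identity already gives $\langle By,y\rangle+\langle Cx,x\rangle\le \|H\xi\|\,\|\xi\|$ where $\xi=(x,y)^{\mathsf T}$; separately, $\|u\|\ge \|Ax\|-\|By\|$ is unhelpful, so instead estimate $\|Ax\|$ and $\|A^*y\|$ via $\|Ax\|^2=\langle Ax,u-By\rangle=\langle Ax,u\rangle-\langle Ax,By\rangle$. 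The term $\langle Ax,By\rangle$ is the obstacle — it mixes $Ax$ with $By$ — and the natural way to handle it is to symmetrize: add $\|Ax\|^2+\|A^*y\|^2$, note $\langle Ax,By\rangle+\langle Cx,A^*y\rangle$... the hoped-for cancellation is $-\langle Ax,By\rangle-\langle A^*y,Cx\rangle$ pairing against $\langle By,y\rangle+\langle Cx,x\rangle$? These do not obviously cancel, so the main obstacle is exactly this cross term. My fallback, and probably the intended argument, is a two-step reduction: first show $H$ has trivial kernel and dense range is not needed — rather, reduce case (2) to case (1) by a perturbation/rotation, or invoke that for nonnegative Hamiltonian matrices with $0\in\rho(A)$ one has the factorization-type estimate $\|u\|^2+\|v\|^2\gtrsim \|Ax\|^2+\|A^*y\|^2-2(\langle By,y\rangle+\langle Cx,x\rangle)$ which, combined with $\|Ax\|\ge m^{-1}\|x\|$, $\|A^*y\|\ge m^{-1}\|y\|$ and the already-established bound on $\langle By,y\rangle+\langle Cx,x\rangle$, closes the loop after absorbing a $\|H\xi\|\,\|\xi\|$ term via Young's inequality. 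So the structure is: (i) pairing identity $\Rightarrow$ form bound; (ii) expand $\|Ax\|^2+\|A^*y\|^2$ and use self-adjointness of $B,C$ to rewrite the cross terms; (iii) feed in $0\in\rho(A)$ and Young's inequality to obtain $\delta$ depending only on $m$. The delicate point requiring care is that all manipulations stay within $\mathcal D(A)\times\mathcal D(A^*)$, which is where the hypothesis $\mathcal D(H)=\mathcal D(A)\times\mathcal D(A^*)$ is used to ensure $Ax$ and $A^*y$ genuinely make sense together with $By$ and $Cx$.
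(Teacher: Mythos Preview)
Your treatment of case~(1) has the right idea and essentially coincides with the paper's approach via the identity $\mathrm{Im}\bigl(\sigma_1(iH)\xi,\xi\bigr)=(Cx,x)+(By,y)$, but your pairing identity carries a sign slip: with $u=Ax+By$, $v=Cx-A^*y$ one has
\[
\mathrm{Re}\bigl(\langle u,y\rangle+\langle x,v\rangle\bigr)=\langle By,y\rangle+\langle Cx,x\rangle,
\]
not $\mathrm{Re}(\langle u,y\rangle-\langle x,v\rangle)$; indeed $\langle u,y\rangle-\langle x,v\rangle=2\langle Ax,y\rangle+\langle By,y\rangle-\langle Cx,x\rangle$, and the cross term does not vanish under taking the real part. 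With the corrected sign your Cauchy--Schwarz step and the coercivity from $0\in\rho(B)\cap\rho(C)$ give the bound exactly as you wrote.

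For case~(2), however, what you have is not a proof but a plan with an acknowledged hole. The ``factorization-type estimate'' $\|u\|^2+\|v\|^2\gtrsim \|Ax\|^2+\|A^*y\|^2-2(\langle By,y\rangle+\langle Cx,x\rangle)$ is asserted, not derived, and in fact expanding $\|u\|^2+\|v\|^2$ produces the cross terms $2\,\mathrm{Re}\langle Ax,By\rangle-2\,\mathrm{Re}\langle Cx,A^*y\rangle$, which are neither bounded by $\langle By,y\rangle+\langle Cx,x\rangle$ nor do they cancel; your own remark that ``these do not obviously cancel'' is correct, and nothing after it closes the gap. Young's inequality on $\|H\xi\|\,\|\xi\|$ cannot absorb these cross terms because they involve $\|By\|$ and $\|Cx\|$, not just the quadratic forms. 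The paper itself does not supply a self-contained argument here either---it defers to \cite[Theorem~3.1]{WA} and \cite[Lemma~4.5]{TW}---but the point is that some genuinely additional input is needed beyond the pairing identity and $0\in\rho(A)$. One workable route uses that the domain hypothesis $\mathcal D(H)=\mathcal D(A)\times\mathcal D(A^*)$ forces $\mathcal D(A)\subset\mathcal D(C)$ and $\mathcal D(A^*)\subset\mathcal D(B)$, so by the closed graph theorem $C$ is $A$-bounded and $B$ is $A^*$-bounded; this relative boundedness, together with invertibility of $A$, is what the cited proofs exploit to close the estimate. Your sketch never invokes this, and without it the argument cannot be completed.
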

\begin{proof}
The first assertion follows from
${\rm Im}(\sigma_1(iH)x,x)=(Cx_1,x_1)+(Bx_2,x_2)$ for all $x=(x_1\ x_2)^t\in\mathcal{D}(H)$,
see the proof of \cite[Theorem 2]{Deni}.
For the second assertion see the proof of \cite[Theorem 3.1]{WA} or \cite[Lemma 4.5]{TW}.
\end{proof}

\begin{lemma}(\cite[Corollary 1]{HK1970})\label{lem2.2}
Let $T$ be a closed densely defined linear operator on a Hilbert space.
Suppose $S$ is a $T$-bounded operator such that $S^*$ is $T^*$-bounded, with both relative bounds $<1$.
Then $S+T$ is closed and $(S+T)^*=S^*+T^*$.
\end{lemma}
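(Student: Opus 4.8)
The plan is to establish, in order: (i) $S+T$ is closed on $\mathcal{D}(T)$; (ii) $S^{*}+T^{*}\subseteq(S+T)^{*}$; and (iii) $\mathcal{D}((S+T)^{*})\subseteq\mathcal{D}(T^{*})$, which together with (ii) gives $(S+T)^{*}=S^{*}+T^{*}$. Write the hypotheses as $\|Sx\|\le a\|x\|+b\|Tx\|$ on $\mathcal{D}(T)$ with $b<1$, and $\|S^{*}y\|\le a'\|y\|+b'\|T^{*}y\|$ on $\mathcal{D}(T^{*})$ with $b'<1$; in particular $\mathcal{D}(T)\subseteq\mathcal{D}(S)$ and $\mathcal{D}(T^{*})\subseteq\mathcal{D}(S^{*})$. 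For (i): on $\mathcal{D}(T)$ one has $(1-b)\|Tx\|\le\|(S+T)x\|+a\|x\|$, so the graph norm of $S+T$ is equivalent to that of $T$ and $S+T$ inherits closedness from $T$; concretely, if $x_{n}\to x$ and $(S+T)x_{n}\to z$, this inequality forces $(Tx_{n})$ to be Cauchy, so $Tx_{n}\to w$, closedness of $T$ gives $x\in\mathcal{D}(T)$, $Tx=w$, and then $\|S(x_{n}-x)\|\le a\|x_{n}-x\|+b\|T(x_{n}-x)\|\to0$, so $(S+T)x=z$. (Here $b<1$ is used.)

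For (ii): for $x\in\mathcal{D}(T)\subseteq\mathcal{D}(S)$ and $y\in\mathcal{D}(T^{*})\subseteq\mathcal{D}(S^{*})$,
\[
\langle (S+T)x,y\rangle=\langle Sx,y\rangle+\langle Tx,y\rangle=\langle x,S^{*}y\rangle+\langle x,T^{*}y\rangle=\langle x,(S^{*}+T^{*})y\rangle ,
\]
whence $y\in\mathcal{D}((S+T)^{*})$ and $(S+T)^{*}y=(S^{*}+T^{*})y$; thus $S^{*}+T^{*}\subseteq(S+T)^{*}$. In particular $\mathcal{D}(T^{*})\subseteq\mathcal{D}((S+T)^{*})$, the two operators agree on $\mathcal{D}(T^{*})$, and the bounds $\|(S+T)^{*}y\|\le(1+b')\|T^{*}y\|+a'\|y\|$ and $(1-b')\|T^{*}y\|\le\|(S+T)^{*}y\|+a'\|y\|$ on $\mathcal{D}(T^{*})$ show that $\mathcal{D}(T^{*})$ is a \emph{closed} subspace of $\mathcal{D}((S+T)^{*})$ in the graph norm of $(S+T)^{*}$. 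Hence it suffices to prove the reverse inclusion (iii) (equivalently: that $\mathcal{D}(T^{*})$ is dense in $\mathcal{D}((S+T)^{*})$ for that norm).

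For (iii) — the crux, and the only place where $b'<1$ is needed, since a one-sided relative bound $<1$ gives (i) but not the adjoint formula — fix $y\in\mathcal{D}((S+T)^{*})$ and put $g:=(S+T)^{*}y$. For $\lambda>0$ set $R_{\lambda}:=(I+\lambda T^{*}T)^{-1}$ and $P_{\lambda}:=(I+\lambda TT^{*})^{-1}$; these are self-adjoint positive contractions converging strongly to $I$ as $\lambda\downarrow0$, with $\operatorname{ran}R_{\lambda}\subseteq\mathcal{D}(T^{*}T)\subseteq\mathcal{D}(T)$, $\operatorname{ran}P_{\lambda}\subseteq\mathcal{D}(TT^{*})\subseteq\mathcal{D}(T^{*})$, the operators $TR_{\lambda}$ and $T^{*}P_{\lambda}$ bounded, and $(TR_{\lambda})^{*}=T^{*}P_{\lambda}$. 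Testing $\langle (S+T)x,y\rangle=\langle x,g\rangle$ with $x=R_{\lambda}z$, $z\in\mathcal{H}$ (legitimate since $R_{\lambda}z\in\mathcal{D}(T)$), gives for all $z$
\[
\langle z,(SR_{\lambda})^{*}y\rangle+\langle z,T^{*}P_{\lambda}y\rangle=\langle z,R_{\lambda}g\rangle ,\qquad\text{i.e.}\qquad T^{*}P_{\lambda}y=R_{\lambda}g-(SR_{\lambda})^{*}y .
\]
If $\sup_{0<\lambda\le1}\|T^{*}P_{\lambda}y\|<\infty$, the proof is finished: choosing $\lambda_{n}\downarrow0$ with $T^{*}P_{\lambda_{n}}y\rightharpoonup\eta$ and using $P_{\lambda_{n}}y\to y$ together with the weak closedness of the (closed) graph of $T^{*}$, one gets $y\in\mathcal{D}(T^{*})$.

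It remains to prove the uniform bound $\sup_{0<\lambda\le1}\|T^{*}P_{\lambda}y\|<\infty$, and this is the step I expect to be the main obstacle. Since $P_{\lambda}y\in\mathcal{D}(T^{*})\subseteq\mathcal{D}(S^{*})$, part (ii) gives $(S+T)^{*}P_{\lambda}y=T^{*}P_{\lambda}y+S^{*}P_{\lambda}y$, and $\|S^{*}P_{\lambda}y\|\le a'\|y\|+b'\|T^{*}P_{\lambda}y\|$, so
\[
(1-b')\|T^{*}P_{\lambda}y\|\le\|(S+T)^{*}P_{\lambda}y\|+a'\|y\| ;
\]
the task is then a $\lambda$-uniform bound for $\|(S+T)^{*}P_{\lambda}y\|$, extracted from the previous identity by an estimate that plays the bound $b<1$ against $b'<1$ (a Neumann-type argument, with the real work in the bookkeeping for $R_{\lambda}$, $P_{\lambda}$ and the commutator $P_{\lambda}S-SR_{\lambda}$). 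Alternatively, one may invoke \cite[Corollary 1]{HK1970} directly at this point.
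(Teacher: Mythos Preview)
The paper does not prove this lemma; it is stated as a citation of \cite[Corollary~1]{HK1970} with no argument given. So there is no ``paper's own proof'' to compare against---the paper simply imports the result.

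Your argument handles (i) and (ii) correctly (these are standard), and your reduction of (iii) to a uniform bound $\sup_{0<\lambda\le1}\|T^{*}P_{\lambda}y\|<\infty$ via weak compactness and the weak closedness of the graph of $T^{*}$ is sound. The gap is that you never establish that uniform bound. The identity $T^{*}P_{\lambda}y=R_{\lambda}g-(SR_{\lambda})^{*}y$ only gives $\|T^{*}P_{\lambda}y\|\le\|g\|+\|SR_{\lambda}\|\,\|y\|$, and since $\|SR_{\lambda}\|\le a+b\|TR_{\lambda}\|$ with $\|TR_{\lambda}\|=O(\lambda^{-1/2})$ as $\lambda\downarrow0$, the na\"ive estimate blows up. Your second route, bounding $\|(S+T)^{*}P_{\lambda}y\|$ uniformly, is not carried out either: you describe it as ``a Neumann-type argument'' involving the commutator $P_{\lambda}S-SR_{\lambda}$ but give no actual inequality. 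This missing estimate is exactly the non-trivial content of the Hess--Kato result, so your closing line ``Alternatively, one may invoke \cite[Corollary~1]{HK1970} directly at this point'' is circular---that corollary \emph{is} Lemma~\ref{lem2.2}. As it stands, the proposal is an outline that stops at the hard step, not a proof.
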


Finally, we give two definitions that are important to Theorem \ref{th3.1} in Section \ref{sec3}.
\begin{definition}(\cite[Section 2.2]{Azi})
Let $\mathcal{J}:X\to X$ be an unitary self-adjoint linear operator.
A linear operator $T$ is said to be $\mathcal{J}$-dissipative
if ${\rm Im}(\mathcal{J}Tx,x)\geq 0$ for all $x\in\mathcal{D}(T)$,
and to be maximal $\mathcal{J}$-dissipative if it is $\mathcal{J}$-dissipative and coincides with any
$\mathcal{J}$-dissipative extension of it.
\end{definition}

\begin{definition}
Let $\mathcal{J}:X\to X$ be a linear operator such that $\mathcal{J}$ or $i\mathcal{J}$ is unitary self-adjoint.
A densely defined linear operator $T$ is said to be $\mathcal{J}$-symmetric
if $\mathcal{J}T\subset(\mathcal{J}T)^*$,
and to be $\mathcal{J}$-self-adjoint if $\mathcal{J}T=(\mathcal{J}T)^*$.
\end{definition}
Obviously,
\begin{align*}
\mathcal{J}T\subset(\mathcal{J}T)^*\iff\mathcal{J}T\mathcal{J}\subset T^*\iff T\subset\mathcal{J}T^*\mathcal{J},\\
\mathcal{J}T=(\mathcal{J}T)^*\iff\mathcal{J}T\mathcal{J}=T^*\iff T=\mathcal{J}T^*\mathcal{J}.
\end{align*}

\section{main results}\label{sec3}

In this section $H=\left(
    \begin{array}{cc}
      A & B \\
      C & -A^* \\
    \end{array}
  \right)$
will denote a nonnegative Hamiltonian operator matrix on $X\times X$,
and moreover, $J:=\left(
                \begin{array}{cc}
                  0 & I \\
                  -I & 0 \\
                \end{array}
              \right), \sigma_1:=\left(
                                   \begin{array}{cc}
                                     0 & I \\
                                     I & 0 \\
                                   \end{array}
                                 \right)$.

First we give the following necessary and sufficient conditions for $H$ to be invertible;
the proof will be given in Section \ref{sec4}.
\begin{theorem}\label{th3.1}
The following statements are equivalent:
\begin{enumerate}
\item $H$ is invertible,
\item $\mathcal{R}(H)=X\times X$,
\item $H$ is $J$-self-adjoint and bounded from below,
\item $H$ is $J$-self-adjoint with closed range and
$$\mathcal{N}(A)\cap\mathcal{N}(C)=\mathcal{N}(B)\cap\mathcal{N}(A^*)=\{0\},$$
\item $iH$ is maximal $\sigma_1$-dissipative and bounded from below,
\item $iH$ is maximal $\sigma_1$-dissipative with closed range and
$$\mathcal{N}(A)\cap\mathcal{N}(C)=\mathcal{N}(B)\cap\mathcal{N}(A^*)=\{0\}.$$
\end{enumerate}
\end{theorem}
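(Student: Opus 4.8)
The plan is to establish the cycle of implications $(1)\Rightarrow(2)\Rightarrow\cdots$ by first reducing the $\sigma_1$-statements to the $J$-statements, then handling the $J$-statements directly. The key observation is that $J = i\sigma_1 \mathcal{P}$ where $\mathcal{P} = \mathrm{diag}(I,-I)$, or more usefully, that $\sigma_1 = -iJ\mathcal{P}$ with $\mathcal{P}$ unitary self-adjoint; from $\mathrm{Im}(\sigma_1(iH)x,x) = \mathrm{Im}(-iJ\mathcal{P}(iH)x,x)$ one should be able to relate maximal $\sigma_1$-dissipativity of $iH$ to $J$-self-adjointness of $H$. In fact I expect the cleanest route is: (i) a preliminary lemma (to be placed in Section~\ref{sec4}, but which I will invoke here) stating that $iH$ is maximal $\sigma_1$-dissipative $\iff H$ is $J$-self-adjoint, using that $H$ is a Hamiltonian operator matrix so $JH$ is automatically symmetric when suitably interpreted, and that maximal dissipativity is equivalent to the dissipative operator being a self-adjoint perturbation under the Cayley-type correspondence. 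Granting that lemma, $(3)\iff(5)$ and $(4)\iff(6)$ are immediate, so it suffices to prove $(1)\Rightarrow(2)\Rightarrow(3)\Rightarrow(4)\Rightarrow(1)$.

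\medskip

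\textbf{The chain $(1)\Rightarrow(2)\Rightarrow(3)$.} The implication $(1)\Rightarrow(2)$ is trivial: an invertible operator is surjective by definition. For $(2)\Rightarrow(3)$, suppose $\mathcal{R}(H)=X\times X$. Boundedness from below: if $H$ were not bounded from below there would be a sequence $x_n\in\mathcal{D}(H)$ with $\|x_n\|=1$ and $Hx_n\to 0$; writing $x_n=(u_n,v_n)^t$ and using nonnegativity, $\mathrm{Im}(\sigma_1(iH)x_n,x_n)=(Cu_n,u_n)+(Bv_n,v_n)\to 0$, forcing $B^{1/2}v_n\to0$ and $C^{1/2}u_n\to 0$. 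Combined with surjectivity and a closed-graph/open-mapping type argument on $H$ (which is closed since $JH$ will be shown symmetric hence closable, and a surjective closed operator that is injective is invertible) one derives a contradiction; the surjectivity forces injectivity here because a nonnegative Hamiltonian matrix with a nontrivial kernel cannot be onto — if $Hx_0=0$ with $x_0\neq 0$ then $(x_0, H^{-\mathrm{pre}}\text{-type argument})$ shows $x_0 \perp \mathcal{R}(H)$ via the symplectic pairing $(Hy, Jx_0)=-(y, JHx_0) = 0$, contradicting density of $\mathcal{R}(H)=X\times X$. This same symplectic identity $(Hy,Jx)=\overline{(Hx,Jy)}$ on $\mathcal{D}(H)$ is exactly $J$-symmetry $JH\subset(JH)^*$; to upgrade to $J$-self-adjointness one uses that $\mathcal{R}(H)=X\times X$ means $\mathcal{R}(JH)=X\times X$, and a densely defined symmetric (in the $J$-twisted sense, i.e. genuinely symmetric after multiplying by $J$) operator with full range is self-adjoint — this is the standard fact that a symmetric operator which is surjective is self-adjoint.

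\medskip

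\textbf{The chain $(3)\Rightarrow(4)\Rightarrow(1)$.} For $(3)\Rightarrow(4)$: bounded from below gives closed range and injectivity, and injectivity of $H$ immediately yields $\mathcal{N}(A)\cap\mathcal{N}(C)=\{0\}$ (if $u\in\mathcal{N}(A)\cap\mathcal{N}(C)$ then $(u,0)^t\in\mathcal{N}(H)$) and, applying the same to $JH$ or using $J$-self-adjointness to identify $\mathcal{N}(H)$ with a kernel of the ``transposed'' matrix, $\mathcal{N}(B)\cap\mathcal{N}(A^*)=\{0\}$. For the crucial $(4)\Rightarrow(1)$: $J$-self-adjointness gives $\mathcal{R}(H)=\mathcal{R}(JH)$ closed, and for a $J$-self-adjoint operator one has the orthogonal decomposition $X\times X=\overline{\mathcal{R}(H)}\oplus\mathcal{N}((JH)^*)=\mathcal{R}(H)\oplus J\mathcal{N}(H)$ (using $(JH)^*=JH$ and unitarity of $J$). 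So $H$ is invertible iff $\mathcal{N}(H)=\{0\}$. Now if $(u,v)^t\in\mathcal{N}(H)$ then $Au+Bv=0$ and $Cu-A^*v=0$; pairing the first with $u$ and the second with $v$ and subtracting (using self-adjointness of $B,C$ and the adjoint relation) gives $(Bv,v)+(Cu,u)=\mathrm{Re}[(Au,v)\text{-cancellation}]=0$, so $Bv=0$ and $Cu=0$; feeding back, $Au=0$ and $A^*v=0$, whence $u\in\mathcal{N}(A)\cap\mathcal{N}(C)=\{0\}$ and $v\in\mathcal{N}(B)\cap\mathcal{N}(A^*)=\{0\}$. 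Thus $\mathcal{N}(H)=\{0\}$ and $H$ is invertible.

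\medskip

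\textbf{The main obstacle.} The technically delicate point is establishing the equivalence between ``$iH$ maximal $\sigma_1$-dissipative'' and ``$H$ is $J$-self-adjoint'' (the promised Section~\ref{sec4} lemma), since maximality of a dissipative operator is a subtle extension-theoretic condition and one must verify that for Hamiltonian matrices specifically the defect spaces behave so that maximal $\sigma_1$-dissipativity forces the $\sigma_1$-symmetric operator $iH$ to actually be $\sigma_1$-self-adjoint — equivalently that there is no room for a proper dissipative extension precisely when $JH=(JH)^*$. I expect this to rely on Lemma~\ref{lem2.2} (to control adjoints of perturbations of the diagonal part) together with the structure of Hamiltonian matrices, and it is the part where the ``connection between Pauli operator matrices and nonnegative Hamiltonian matrices'' advertised in the abstract does the real work. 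The remaining subtlety is the open-mapping step in $(2)\Rightarrow(3)$: one needs $H$ to be \emph{closed} before concluding surjective $+$ injective $\Rightarrow$ invertible, and closedness itself comes from $J$-symmetry ($H\subset JH^*J$ with the right-hand side closed), so the logical ordering must be arranged so that $J$-symmetry is derived before the open-mapping argument is applied.
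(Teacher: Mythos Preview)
Your overall skeleton matches the paper's: reduce (5) and (6) to (3) and (4) via a lemma (the paper's Proposition~\ref{prop4.3}) stating that $iH$ is maximal $\sigma_1$-dissipative iff $H$ is $J$-self-adjoint; get $(3)\Leftrightarrow(4)$ from the kernel identity $\mathcal{N}(H)=(\mathcal{N}(A)\cap\mathcal{N}(C))\times(\mathcal{N}(B)\cap\mathcal{N}(A^*))$ (your argument for this is exactly the paper's Proposition~\ref{prop4.5}) together with ``closed and bounded from below $\Leftrightarrow$ injective with closed range''; and handle $(1)\Leftrightarrow(2)\Leftrightarrow(3)$ via ``symmetric $+$ surjective $\Rightarrow$ self-adjoint with $0$ in the resolvent''. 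So the architecture is right.

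Two corrections. First, your $(2)\Rightarrow(3)$ is much more complicated than necessary, and the sequence/$B^{1/2},C^{1/2}$ detour is a dead end. The paper does it in one line: $\mathcal{R}(H)=X\times X$ gives $\mathcal{R}(JH)=X\times X$; since $JH$ is symmetric (this holds for \emph{every} Hamiltonian matrix, no nonnegativity needed --- cf.\ the paper's Remark following the theorem), a symmetric surjective operator is automatically self-adjoint with $0\in\rho(JH)$. This simultaneously gives $J$-self-adjointness, closedness, and boundedness from below, so your worry about ``logical ordering'' for closedness evaporates. You should drop the nonnegativity-based argument here entirely.

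Second, your guess that the key lemma (Proposition~\ref{prop4.3}) will rest on Lemma~\ref{lem2.2} is wrong; Hess--Kato perturbation plays no role. The paper's proof uses the Krein-space machinery of Azizov--Iokhvidov: the $\sigma_1$-adjoint $T^c=\sigma_1 T^*\sigma_1$, the fact that $T$ is maximal $\sigma_1$-dissipative iff $-T^c$ is, and the Pauli identities $\sigma_2\sigma_1=-\sigma_1\sigma_2=i\sigma_3$ to pass between $\sigma_1$-dissipativity, $\sigma_2$-symmetry (equivalently $J$-symmetry), and the $\sigma_3$-conjugate. Concretely: from $iH\subset\sigma_2(iH)^*\sigma_2$ one gets $iH\subset\sigma_3(iH)^c\sigma_3$, and nonnegativity makes $\sigma_3(iH)^c\sigma_3$ itself $\sigma_1$-dissipative, so maximality forces equality. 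This Pauli-matrix juggling is exactly the ``connection'' advertised in the abstract; your instinct that this is where the real work lies is correct, but the mechanism is algebraic identities among the $\sigma_k$, not relative-boundedness perturbation theory.
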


\begin{remark}
We see from Lemma \ref{lem2.1} that Theorem \ref{th1.1}, Theorem \ref{th1.2}, and Theorem \ref{th1.3} are corollaries of Theorem \ref{th3.1}.
\end{remark}

\begin{remark}
The statements (1), (2), (3) of Theorem \ref{th3.1} are equivalent for general Hamiltonian operators (not necessarily nonnegative).
\end{remark}

\begin{remark}
The assumption about nonnegativity of the Hamiltonian operator matrix
is essential for the statement (4), (5), (6),
see Proposition \ref{prop4.1}, Proposition \ref{prop4.5} in Section \ref{sec4} and Example \ref{exam3.1} below.
\end{remark}

\begin{remark}
By Proposition \ref{prop4.4} in Section \ref{sec4}, ``$H$ is $J$-self-adjoint" (``$iH$ is maximal $\sigma_1$-dissipative", respectively)
can be replaced by ``~$\mathcal{R}(H+\sigma_1)=X\times X$".
\end{remark}

\begin{corollary}\label{cor3.1}
Suppose $H$ is $J$-self-adjoint.
Then $i\mathbb R\subset\rho(H)$ if one of the following statements holds:
\begin{enumerate}
\item $0\in\rho(B)\cap\rho(C)$,
\item $\mathcal{D}(H)=\mathcal{D}(A)\times\mathcal{D}(A^*)$ and $i\mathbb R\subset\rho(A)$.
\end{enumerate}
\end{corollary}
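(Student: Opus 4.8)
The plan is to deduce Corollary \ref{cor3.1} from Theorem \ref{th3.1} by a shift-and-rescale argument that reduces the spectral parameter $i\beta$ on the imaginary axis to the parameter $0$ for a modified Hamiltonian operator matrix. The key observation is that if $H$ is a Hamiltonian operator matrix and $\beta\in\mathbb R$, then $H-i\beta$ is again (essentially) a Hamiltonian operator matrix after a suitable unitary conjugation: precisely, with the unitary $U_\beta=\mathrm{diag}(I, -I)$ one has
\begin{align*}
U_\beta(H-i\beta)U_\beta^{-1}=\left(\begin{array}{cc} A-i\beta & B \\ C & -(A-i\beta)^* \end{array}\right),
\end{align*}
since $-(A-i\beta)^*=-A^*-i\beta$ and conjugating by $U_\beta$ flips the sign of the off-diagonal entries — wait, one must instead check that the operator $H-i\beta$ itself has the Hamiltonian form with $A$ replaced by $A-i\beta$: the $(1,1)$ entry becomes $A-i\beta$, the $(2,2)$ entry becomes $-A^*-i\beta=-(A-i\beta)^*$, while $B,C$ are unchanged. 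So $H-i\beta$ is literally a Hamiltonian operator matrix with the same self-adjoint, nonnegative off-diagonal entries $B,C$ and with $A$ replaced by $A_\beta:=A-i\beta$; in particular it is still a nonnegative Hamiltonian operator matrix, and $\mathcal{D}(H-i\beta)=\mathcal{D}(H)$, $\mathcal{D}(A_\beta)=\mathcal{D}(A)$, $\mathcal{D}(A_\beta^*)=\mathcal{D}(A^*)$.

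Next I would verify that the $J$-self-adjointness is preserved under this shift, i.e. $J(H-i\beta)=(J(H-i\beta))^*$. This follows because $J(H-i\beta)=JH-i\beta J$ and $\beta J$ is a bounded self-adjoint operator (indeed $J^*=-J$, so $(i\beta J)^*=-i\beta J^*=i\beta J$), hence by the standard perturbation fact that $(T+S)^*=T^*+S^*$ when $S$ is bounded and self-adjoint, $(J(H-i\beta))^*=(JH)^*-i\beta J=JH-i\beta J=J(H-i\beta)$. (Alternatively one invokes the equivalent formulation $H_\beta=\mathcal{J}H_\beta^*\mathcal{J}$ with $\mathcal{J}=J$, which is immediate from $A_\beta, -A_\beta^*, B, C$ having the same structural relations as $A,-A^*,B,C$.) Thus $H-i\beta$ satisfies the standing hypothesis ``$J$-self-adjoint'' of Theorem \ref{th3.1}.

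Then I would check the remaining hypothesis of Theorem \ref{th3.1}(3) — boundedness from below — for $H-i\beta$ under each of the two alternatives, using Lemma \ref{lem2.1}. Under alternative (1), $0\in\rho(B)\cap\rho(C)$ is a hypothesis about the off-diagonal entries, which are unchanged by the shift, so Lemma \ref{lem2.1}(1) applies directly to $H-i\beta$ and gives that $H-i\beta$ is bounded from below. Under alternative (2), $\mathcal{D}(H)=\mathcal{D}(A)\times\mathcal{D}(A^*)$ gives $\mathcal{D}(H-i\beta)=\mathcal{D}(A_\beta)\times\mathcal{D}(A_\beta^*)$, and $i\mathbb R\subset\rho(A)$ yields $i\beta\in\rho(A)$, i.e. $0\in\rho(A-i\beta)=\rho(A_\beta)$; hence Lemma \ref{lem2.1}(2) applies to $H-i\beta$ and again $H-i\beta$ is bounded from below. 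In either case Theorem \ref{th3.1} (equivalence of (1) and (3)) gives that $H-i\beta$ is invertible, i.e. $0\in\rho(H-i\beta)$, which is exactly $i\beta\in\rho(H)$. Since $\beta\in\mathbb R$ was arbitrary, $i\mathbb R\subset\rho(H)$.

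The only genuinely delicate point — and the step I expect to require the most care — is confirming that $H-i\beta$ is a Hamiltonian operator matrix \emph{in the precise sense of the paper's definition}, in particular that it is densely defined with closed densely defined diagonal entries $A_\beta, -A_\beta^*$; density and closedness of $A-i\beta$ follow from those of $A$ since $i\beta I$ is bounded, and $(A-i\beta)^*=A^*-i\beta$ by the same bounded-perturbation fact, so this is routine but should be stated. One should also note that the nonnegativity of $B,C$ and their self-adjointness are untouched, so $H-i\beta$ is a \emph{nonnegative} Hamiltonian operator matrix, which is what lets us invoke Theorem \ref{th3.1} rather than merely the general-Hamiltonian equivalence of (1),(2),(3). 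With these structural checks in place the corollary is immediate.
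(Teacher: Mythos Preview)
Your proposal is correct and follows essentially the same route as the paper: observe that $H-i\beta$ is again a nonnegative Hamiltonian operator matrix (with $A$ replaced by $A-i\beta$), verify it remains $J$-self-adjoint, then apply Lemma~\ref{lem2.1} and Theorem~\ref{th3.1}(3) to conclude $i\beta\in\rho(H)$. The only blemishes are the abandoned $U_\beta$-conjugation detour and the slip in writing ``$\beta J$ is \ldots\ self-adjoint'' when you mean $i\beta J$ (your parenthetical computation is for $i\beta J$ and is correct); neither affects the argument.
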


\begin{proof}
For each $\lambda\in\mathbb R$,
$$H-i\lambda=\left(
               \begin{array}{cc}
                 A-i\lambda & B \\
                 C & -(A-i\lambda)^* \\
               \end{array}
             \right)$$
is a nonnegative Hamiltonian operator matrix with the property $J(H-i\lambda)=(J(H-i\lambda))^*$.
Thus, by Theorem \ref{th3.1} and Lemma \ref{lem2.1}, $i\lambda\in\rho(H)$.
\end{proof}
Since conditions for $H$ to be invertible given by its entry operators $A, B, C$ are much more useful than those given by $H$ itself,
the rest of this section is devoted to the former.
We shall consider the two cases that $\mathcal{D}(H)=\mathcal{D}(A)\times\mathcal{D}(A^*)$ and $\mathcal{D}(H)=\mathcal{D}(C)\times\mathcal{D}(B)$.

First we consider the case $\mathcal{D}(H)=\mathcal{D}(A)\times\mathcal{D}(A^*)$.
The following is an improvement of Theorem \ref{th1.1}.
\begin{theorem}\label{th3.2}
Assume both $B$ and $C$ are bounded.
Then $H$ is invertible if and only if
both $\left(
      \begin{array}{c}
        A \\
        C \\
      \end{array}
    \right)$ and
$\left(
      \begin{array}{c}
        B \\
        -A^* \\
      \end{array}
    \right)$ are bounded from below.
In particular, if $A, B$, and $C$ are $n\times n$ matrices such that $B$ and $C$ are nonnegative Hermitian matrices,
then $H$ is invertible if and only if ${\rm rank~}(A^*\ \ C)={\rm rank~}(B\ \ -A)=n$.
\end{theorem}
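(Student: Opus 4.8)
The plan is to prove Theorem \ref{th3.2} by first establishing the general Hilbert-space criterion and then specializing to matrices. For the general statement, note that $H$ is $J$-self-adjoint here: since $B$ and $C$ are bounded and self-adjoint, $H$ is a bounded perturbation of $\left(\begin{smallmatrix} A & 0 \\ 0 & -A^* \end{smallmatrix}\right)$, and one checks directly from $\mathcal{D}(H)=\mathcal{D}(A)\times\mathcal{D}(A^*)$ and Lemma \ref{lem2.2} (or a direct computation) that $JH=(JH)^*$. So by Theorem \ref{th3.1}, $H$ is invertible if and only if $H$ is bounded from below. The task is then to show that $H$ being bounded from below is equivalent to both columns $\binom{A}{C}$ and $\binom{B}{-A^*}$ being bounded from below. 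One direction is easy: if $H$ is bounded from below, then for $x_2=0$ we get $\|Ax_1\|^2+\|Cx_1\|^2\geq\delta^2\|x_1\|^2$, and symmetrically for $x_1=0$, so both columns are bounded from below.

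For the converse, suppose there exist $\delta_1,\delta_2>0$ with $\|Ax_1\|^2+\|Cx_1\|^2\geq\delta_1^2\|x_1\|^2$ and $\|Bx_2\|^2+\|A^*x_2\|^2\geq\delta_2^2\|x_2\|^2$. The key computation is to estimate $\|Hx\|^2 = \|Ax_1+Bx_2\|^2 + \|Cx_1-A^*x_2\|^2$ from below. Expanding, $\|Hx\|^2 = \|Ax_1\|^2+\|Cx_1\|^2 + \|Bx_2\|^2+\|A^*x_2\|^2 + 2\,\mathrm{Re}\big[(Ax_1,Bx_2)-(Cx_1,A^*x_2)\big]$. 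Here is where nonnegativity enters and is the crux of the argument: the cross term is $2\,\mathrm{Re}\big[(Ax_1,Bx_2)-(A x_1, C x_1)\big]$ — wait, more carefully, $(Cx_1,A^*x_2) = (ACx_1,x_2)$ only if $Cx_1\in\mathcal{D}(A)$, which need not hold, so one must instead write $-(Cx_1,A^*x_2)$ and combine with $(Ax_1,Bx_2)$ using that $J H$ is symmetric: $(JHx,x)$ is real, i.e. $(Cx_1,x_2)-(A^*x_2,x_2)' $... The honest route is to use $\mathrm{Im}(\sigma_1(iH)x,x) = (Cx_1,x_1)+(Bx_2,x_2)\geq 0$ (from the proof of Lemma \ref{lem2.1}) together with $\|Hx\|\cdot\|\sigma_1 x\| \geq |(Hx,\sigma_1 x)| = |(\sigma_1 Hx, x)| \geq \mathrm{Im}(\sigma_1 Hx, x) = (Cx_1,x_1)+(Bx_2,x_2) \geq 0$, and separately estimate using the two column bounds. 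I expect the main obstacle to be packaging these estimates cleanly: one wants to show that if $\|Hx\|$ is small, then both $(Cx_1,x_1)+(Bx_2,x_2)$ is small (hence $C^{1/2}x_1$, $B^{1/2}x_2$ small) and the ``$A$-parts'' $\|Ax_1-$ something$\|$ are controlled, and then feed this back into the column lower bounds; a Cauchy--Schwarz/Young-inequality argument splitting the cross terms with the nonnegativity term absorbing the indefinite part should close it, possibly after passing to a minimizing sequence and deriving a contradiction rather than a direct inequality.

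Finally, for the matrix case: here $A,B,C$ are $n\times n$ with $B,C$ nonnegative Hermitian, $\mathcal{D}(H)=\mathbb{C}^n\times\mathbb{C}^n$, all operators bounded, and in finite dimensions ``bounded from below'' is the same as ``injective,'' which for the column $\binom{A}{C}:\mathbb{C}^n\to\mathbb{C}^{2n}$ is the same as full column rank $n$, i.e. $\mathrm{rank}\binom{A}{C}=n$. Since $\mathrm{rank}\binom{A}{C} = \mathrm{rank}\binom{A}{C}^{*} = \mathrm{rank}(A^*\ \ C^*) = \mathrm{rank}(A^*\ \ C)$ using $C=C^*$, and similarly $\mathrm{rank}\binom{B}{-A^*}=\mathrm{rank}(B^*\ \ -A)=\mathrm{rank}(B\ \ -A)$ using $B=B^*$, the general criterion specializes exactly to $\mathrm{rank}(A^*\ \ C)=\mathrm{rank}(B\ \ -A)=n$. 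The only point to verify is that the hypothesis of Theorem \ref{th3.1}/\ref{th3.2} — that $H$ be a nonnegative Hamiltonian operator matrix — holds, which is immediate since $B,C\succeq 0$ and everything is bounded and everywhere defined.
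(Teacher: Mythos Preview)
Your overall plan matches the paper's proof: reduce to ``$H$ bounded from below'' via Theorem~\ref{th3.1} (since $B,C$ bounded makes $H$ automatically $J$-self-adjoint), dispose of the \emph{only if} direction by setting $x_2=0$ or $x_1=0$, and for the \emph{if} direction use a contradiction/sequence argument driven by the nonnegativity identity $(Cx_1,x_1)+(Bx_2,x_2)=\mathrm{Im}(\sigma_1(iH)x,x)$. The matrix case is handled exactly as in the paper.

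The one place where your write-up is still groping is the heart of the \emph{if} direction. Drop the direct-inequality attempt and the Cauchy--Schwarz/Young detour; the paper's argument is cleaner and you have already identified its ingredients. Take $x^{(n)}=(x_1^{(n)},x_2^{(n)})^t$ with $\|x^{(n)}\|=1$ and $Hx^{(n)}\to 0$. Pairing the two rows of $Hx^{(n)}$ with $x_2^{(n)}$ and $x_1^{(n)}$ and subtracting gives $(Cx_1^{(n)},x_1^{(n)})+(Bx_2^{(n)},x_2^{(n)})\to 0$, hence $C^{1/2}x_1^{(n)}\to 0$ and $B^{1/2}x_2^{(n)}\to 0$. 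Here is the step you did not make explicit and where boundedness of $B,C$ is actually used: $B^{1/2}$ and $C^{1/2}$ are bounded, so $Cx_1^{(n)}=C^{1/2}(C^{1/2}x_1^{(n)})\to 0$ and $Bx_2^{(n)}\to 0$. Now simply read off $Ax_1^{(n)}\to 0$ and $A^*x_2^{(n)}\to 0$ from the two rows of $Hx^{(n)}\to 0$, contradicting the column lower bounds. No further estimate is needed; the ``$A$-parts'' are controlled by direct substitution, not by splitting cross terms. This step is also exactly why the theorem fails for unbounded $B,C$ (Remark~\ref{rmk3.5}, Example~\ref{exam3.2}): without boundedness of $B^{1/2},C^{1/2}$ you cannot pass from $C^{1/2}x_1^{(n)}\to 0$ to $Cx_1^{(n)}\to 0$.
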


\begin{proof}
Obviously $H$ is $J$-self-adjoint.
By Theorem \ref{th3.1}, it is sufficient to show $H$ is bounded from below if and only if
both $\left(
      \begin{array}{c}
        A \\
        C \\
      \end{array}
    \right)$ and
$\left(
      \begin{array}{c}
        B \\
        -A^* \\
      \end{array}
    \right)$ are bounded from below.
The \emph{only if} part is trivial.
We start to prove the \emph{if} part.
If $H$ were not bounded from below,
then there exist $x^{(n)}=(x_1^{(n)}\ x_2^{(n)})^t\in\mathcal{D}(A)\times\mathcal{D}(A^*), \|x^{(n)}\|=1, n=1, 2, ...$, such that
$Hx^{(n)}\to 0$, i.e.,
\begin{align}\label{eq3.1}
Ax_1^{(n)}+Bx_2^{(n)}\to 0,\\
\nonumber Cx_1^{(n)}-A^*x_2^{(n)}\to 0,
\end{align}
and so
\begin{align*}
(Ax_1^{(n)},x_2^{(n)})+(Bx_2^{(n)},x_2^{(n)})\to 0,\\
\nonumber (x_1^{(n)},Cx_1^{(n)})-(x_1^{(n)},A^*x_2^{(n)})\to 0,
\end{align*}
consequently
$$(x_1^{(n)},Cx_1^{(n)})+(Bx_2^{(n)},x_2^{(n)})\to 0.$$
Since $B, C$ are nonnegative self-adjoint operator, we get
$$(x_1^{(n)},Cx_1^{(n)})\to 0, (Bx_2^{(n)},x_2^{(n)})\to 0,$$
i.e.,
\begin{equation}\label{eq3.2}
C^{\frac{1}{2}}x_1^{(n)}\to 0, B^{\frac{1}{2}}x_2^{(n)}\to 0.
\end{equation}
But $B^{\frac{1}{2}}, C^{\frac{1}{2}}$ are bounded since $B, C$ are bounded,
and therefore
$$Cx_1^{(n)}\to 0, Bx_2^{(n)}\to 0.$$
By \eqref{eq3.1},
$$Ax_1^{(n)}\to 0, Cx_1^{(n)}\to 0, Bx_2^{(n)}\to 0, -A^*x_2^{(n)}\to 0,$$
i.e.,
\begin{equation}\label{eq3.3}
\left(
    \begin{array}{c}
      A \\
      C \\
    \end{array}
  \right)x_1^{(n)}\to 0, \left(
    \begin{array}{c}
      B \\
      -A^* \\
    \end{array}
  \right)x_2^{(n)}\to 0.
\end{equation}
On the other hand, by the assumption there exists a positive number $\delta$ such that
\begin{align*}
\|\left(
    \begin{array}{c}
      A \\
      C \\
    \end{array}
  \right)x_1^{(n)}\|^2
+\|\left(
    \begin{array}{c}
      B \\
      -A^* \\
    \end{array}
  \right)x_2^{(n)}\|^2
\geq\delta(\|x_1^{(n)}\|^2+\|x_2^{(n)}\|^2)
=\delta,
\end{align*}
this contradicts \eqref{eq3.3}.
Hence $H$ is bounded from below.
\end{proof}

\begin{remark}\label{rmk3.5}
The assertion of Theorem \ref{th3.2} does not hold for nonnegative Hamiltonian operators with unbounded off-diagonal entries,
see Example \ref{exam3.2}.
\end{remark}
\begin{remark}\label{rmk3.6}
The assumption about nonnegativity of the Hamiltonian operator matrix
is essential, see Example \ref{exam3.1} below.
\end{remark}

We extend Theorem \ref{th1.1} to the following case.
\begin{proposition}\label{prop3.1}
Suppose that $C$ is $A$-bounded and $B$ is $A^*$-bounded with both relative bounds less than $1$,
and that $0\in\rho(A)\cup(\rho(B)\cap\rho(C))$.
Then $H$ is invertible.
\end{proposition}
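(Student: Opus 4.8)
The plan is to verify the two hypotheses of Theorem \ref{th3.1}(3), namely that $H$ is $J$-self-adjoint and that $H$ is bounded from below, since together these give invertibility. Since $C$ is $A$-bounded and $B$ is $A^*$-bounded, the operator $H$ can be written as $H = H_0 + S$, where
$$H_0 = \begin{pmatrix} A & 0 \\ 0 & -A^* \end{pmatrix}, \qquad S = \begin{pmatrix} 0 & B \\ C & 0 \end{pmatrix}.$$
The diagonal operator $H_0$ is closed and densely defined on $\mathcal{D}(A)\times\mathcal{D}(A^*)$, with adjoint $H_0^* = \begin{pmatrix} A^* & 0 \\ 0 & -A \end{pmatrix}$. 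The relative boundedness assumptions say precisely that $S$ is $H_0$-bounded with relative bound $<1$ (the bound for $S$ relative to $H_0$ is the maximum of the two relative bounds), and a short computation shows $S^* = \begin{pmatrix} 0 & C \\ B & 0 \end{pmatrix}$, which is $H_0^*$-bounded with the same relative bounds $<1$. Hence by Lemma \ref{lem2.2}, $H = H_0 + S$ is closed and $H^* = H_0^* + S^*$. A direct check then gives
$$JH^* J = J\begin{pmatrix} A^* & C \\ B & -A \end{pmatrix} J = \begin{pmatrix} A & B \\ C & -A^* \end{pmatrix} = H,$$
so $H = J H^* J$, i.e.\ $H$ is $J$-self-adjoint.

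It remains to show $H$ is bounded from below. The natural route is to reduce to Lemma \ref{lem2.1}: I would first observe that the relative-bound $<1$ hypothesis on $C$ relative to $A$ implies $\mathcal{D}(A)\subset\mathcal{D}(C)$, and likewise $\mathcal{D}(A^*)\subset\mathcal{D}(B)$, so that $\mathcal{D}(H) = \mathcal{D}(A)\times\mathcal{D}(A^*)$. Then in the case $0\in\rho(A)$, Lemma \ref{lem2.1}(2) applies directly and $H$ is bounded from below, completing the argument via Theorem \ref{th3.1}. In the remaining case $0\in\rho(B)\cap\rho(C)$, Lemma \ref{lem2.1}(1) applies directly as well, with no need for the domain identity at all; thus in either case $H$ is bounded from below.

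The step I expect to need the most care is the verification that $S^*$ equals $\begin{pmatrix} 0 & C \\ B & 0 \end{pmatrix}$ and is $H_0^*$-bounded with relative bound $<1$, so that Lemma \ref{lem2.2} genuinely applies; one must check that the off-diagonal block operator $S$ acting on $\mathcal{D}(A)\times\mathcal{D}(A^*)$ has the claimed adjoint — here the self-adjointness of $B$ and $C$ (part of the standing definition of a Hamiltonian operator matrix) is what pins down $S^*$, and the relative bound of $S^*$ with respect to $H_0^*$ is computed block by block, namely $\|Cx_2\|\le a_1\|A^*x_2\| + b_1\|x_2\|$ for $x_2\in\mathcal{D}(A^*)$... wait, that is not what is given; rather, we have $\|Bx\|\le a_1\|A^*x\|+b_1\|x\|$ and $\|Cx\|\le a_2\|Ax\|+b_2\|x\|$, and reading these off the block structure of $H_0^*$ shows $S^*$ is $H_0^*$-bounded with relative bound $\max\{a_1,a_2\}<1$. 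Once this bookkeeping is in place, the rest is routine, and the two hypotheses of Theorem \ref{th3.1}(3) are met.
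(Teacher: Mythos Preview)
Your proposal is correct and follows essentially the same route as the paper: both verify the hypotheses of Theorem~\ref{th3.1}(3) by using Lemma~\ref{lem2.2} (applied to the decomposition $H=H_0+S$ with diagonal $H_0$ and off-diagonal $S$) to get $J$-self-adjointness, and Lemma~\ref{lem2.1} to get boundedness from below. The paper's proof is terser (it cites \cite{WA2} for the $J$-self-adjointness step), but the underlying argument is identical to yours.
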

\begin{proof}
It is sufficient to show $H$ is $J$-self-adjoint and bounded from below.
Since $C$ is $A$-bounded and $B$ is $A^*$-bounded with both relative bounds less than $1$,
one see easily from Lemma \ref{lem2.2} that $JH=(JH)^*$ (see also \cite{WA2}).
Moreover, $H$ is bounded from below by Lemma \ref{lem2.1}.
\end{proof}

We also extend Theorem \ref{th1.1} to another case.
\begin{theorem}\label{th3.3}
Suppose that $\mathcal{D}(H)=\mathcal{D}(A)\times\mathcal{D}(A^*)$, and that $\rho(A)\neq\emptyset$.
If $0\in\rho(A)\cup(\rho(B)\cap\rho(C))$, then the following statements are equivalent:
\begin{enumerate}
\item $H$ is invertible,
\item $A^*+\lambda+C(A-\lambda)^{-1}B=(A+\overline{\lambda}+B(A^*-\overline{\lambda})^{-1}C)^*$
for some (and hence for all) $\lambda\in\rho(A)$,
\item $A+\overline{\lambda}+B(A^*-\overline{\lambda})^{-1}C=(A^*+\lambda+C(A-\lambda)^{-1}B)^*$
for some (and hence for all) $\lambda\in\rho(A)$.
\end{enumerate}
\end{theorem}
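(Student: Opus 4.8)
\emph{Overview.} The plan is to reduce invertibility of $H$ to the single domain equality $\mathcal{D}(H^{*})=\mathcal{D}(A^{*})\times\mathcal{D}(A)$, and then, by means of a Frobenius--Schur factorization of $H-\lambda$, to recognize that equality as each of (2) and (3). Concretely: under the hypotheses Lemma \ref{lem2.1} applies (its first alternative if $0\in\rho(B)\cap\rho(C)$, its second if $0\in\rho(A)$, using $\mathcal{D}(H)=\mathcal{D}(A)\times\mathcal{D}(A^{*})$), so $H$ is bounded from below; hence by the equivalence (1)$\Leftrightarrow$(3) of Theorem \ref{th3.1}, $H$ is invertible iff it is $J$-self-adjoint, i.e. $JH=(JH)^{*}$. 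A routine inner-product computation, using only self-adjointness of $B,C$ and the definition of $A^{*}$, shows $H$ is always $J$-symmetric here (i.e. $JH\subseteq(JH)^{*}$); since $J$ is bounded with $J^{*}=-J$, so that $(JH)^{*}=-H^{*}J$, the condition $JH=(JH)^{*}$ is equivalent to $\mathcal{D}(H^{*})=\mathcal{D}(A^{*})\times\mathcal{D}(A)$. It thus suffices to prove that this equality is equivalent to (2) and to (3).

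\emph{The factorization.} Fix $\lambda\in\rho(A)$. From $\mathcal{D}(H)=\mathcal{D}(A)\times\mathcal{D}(A^{*})$ one reads off $\mathcal{D}(A)\subseteq\mathcal{D}(C)$ and $\mathcal{D}(A^{*})\subseteq\mathcal{D}(B)$, so the closed graph theorem makes $C(A-\lambda)^{-1}$ bounded and everywhere defined. With $S_{\lambda}:=-\bigl(A^{*}+\lambda+C(A-\lambda)^{-1}B\bigr)$, which has $\mathcal{D}(S_{\lambda})=\mathcal{D}(A^{*})$, one checks the operator identity on $\mathcal{D}(H)$
\[
H-\lambda=\left(\begin{array}{cc} I & 0\\ C(A-\lambda)^{-1} & I\end{array}\right)\left(\begin{array}{cc} A-\lambda & B\\ 0 & S_{\lambda}\end{array}\right)=:L_{\lambda}U_{\lambda},
\]
with $L_{\lambda}$ bounded and boundedly invertible and $\mathcal{D}(U_{\lambda})=\mathcal{D}(A)\times\mathcal{D}(A^{*})$. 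Hence $\mathcal{D}(H^{*})=\mathcal{D}((H-\lambda)^{*})=(L_{\lambda}^{*})^{-1}\mathcal{D}(U_{\lambda}^{*})$. Writing out $U_{\lambda}^{*}$ from the definition of the adjoint and using $\mathcal{D}(A^{*})\subseteq\mathcal{D}(B)$ together with self-adjointness of $B$ (which lets the $(1,2)$-entry be moved onto the first coordinate), one gets $\mathcal{D}(U_{\lambda}^{*})=\mathcal{D}(A^{*})\times\mathcal{D}(S_{\lambda}^{*})$. Since $(C(A-\lambda)^{-1})^{*}$ agrees with $(A^{*}-\overline{\lambda})^{-1}C$ on $\mathcal{D}(C)\supseteq\mathcal{D}(A)$ and hence carries $\mathcal{D}(A)$ into $\mathcal{D}(A^{*})$, an elementary computation with the bounded upper-triangular factor $(L_{\lambda}^{*})^{-1}$ shows that $\mathcal{D}(H^{*})=\mathcal{D}(A^{*})\times\mathcal{D}(A)$ holds if and only if $\mathcal{D}(S_{\lambda}^{*})=\mathcal{D}(A)$.

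\emph{Identification with (2) and (3).} Put $T_{1}:=A+\overline{\lambda}+B(A^{*}-\overline{\lambda})^{-1}C$ and $T_{2}:=A^{*}+\lambda+C(A-\lambda)^{-1}B$, with domains $\mathcal{D}(A)$ and $\mathcal{D}(A^{*})$ and $S_{\lambda}=-T_{2}$. Using self-adjointness of $B,C$ and $((A-\lambda)^{-1})^{*}=(A^{*}-\overline{\lambda})^{-1}$ one obtains $(T_{1}x,y)=(x,T_{2}y)$ for $x\in\mathcal{D}(A)$, $y\in\mathcal{D}(A^{*})$, hence $T_{1}\subseteq T_{2}^{*}$; in particular $\mathcal{D}(S_{\lambda}^{*})=\mathcal{D}(T_{2}^{*})\supseteq\mathcal{D}(A)$ always, and $\mathcal{D}(S_{\lambda}^{*})=\mathcal{D}(A)$ is equivalent to $T_{2}^{*}=T_{1}$, which is precisely (3); so (1)$\Leftrightarrow$(3). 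For (1)$\Leftrightarrow$(2) I would apply the equivalence just proved to $\sigma_{1}H\sigma_{1}$, which is the nonnegative Hamiltonian operator matrix obtained from $H$ by replacing $A,B,C$ by $-A^{*},C,B$; it has domain $\mathcal{D}(A^{*})\times\mathcal{D}(A)$, its diagonal entry $-A^{*}$ has nonempty resolvent set with $0\in\rho(-A^{*})\Leftrightarrow 0\in\rho(A)$, so it satisfies all the hypotheses, and it is invertible if and only if $H$ is; the substitution $\mu=-\overline{\lambda}$ turns statement (3) for $\sigma_{1}H\sigma_{1}$ into statement (2) for $H$. Finally the clause ``for some (and hence for all) $\lambda$'' is automatic, since (2) and (3), at each fixed $\lambda\in\rho(A)$, have been shown equivalent to the $\lambda$-free statement (1).

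The step I expect to be most delicate is the domain bookkeeping around the factorization: one must confirm that the displayed identity is valid as an equality of genuinely unbounded operators on $\mathcal{D}(H)$, and that passing to adjoints through the bounded factor $L_{\lambda}$ transports $\mathcal{D}(H^{*})=\mathcal{D}(A^{*})\times\mathcal{D}(A)$ exactly onto $\mathcal{D}(S_{\lambda}^{*})=\mathcal{D}(A)$ --- all with $B$ and $C$ only $A^{*}$-, resp. $A$-bounded and no relative-bound-$<1$ assumption at hand. The remaining ingredients are either cited (Lemma \ref{lem2.1}, Theorem \ref{th3.1}) or are routine inner-product identities.
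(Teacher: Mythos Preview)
Your argument is correct. The reduction to $J$-self-adjointness via Lemma~\ref{lem2.1} and Theorem~\ref{th3.1} is exactly what the paper does; where you diverge is in the second step. The paper simply invokes an external result, \cite[Theorem~3.1]{AJW}, which asserts directly that $JH=(JH)^{*}$ is equivalent to each of (2) and (3). You instead supply a self-contained proof of that external result: a Frobenius--Schur factorization $H-\lambda=L_{\lambda}U_{\lambda}$ with $L_{\lambda}$ bounded and invertible, followed by a computation of $\mathcal{D}(U_{\lambda}^{*})$ that pins down $\mathcal{D}(H^{*})$ in terms of $\mathcal{D}(S_{\lambda}^{*})$, and finally the symmetry trick with $\sigma_{1}H\sigma_{1}$ to swap the roles of the two Schur complements. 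The domain bookkeeping you flag as delicate does go through: the key points are that $C(A-\lambda)^{-1}$ is bounded (closed graph, from $\mathcal{D}(A)\subseteq\mathcal{D}(C)$), that $(C(A-\lambda)^{-1})^{*}$ restricted to $\mathcal{D}(A)$ equals $(A^{*}-\overline{\lambda})^{-1}C$ and lands in $\mathcal{D}(A^{*})$, and that the upper-triangular form of $(L_{\lambda}^{*})^{-1}$ then makes the equivalence $\mathcal{D}(H^{*})=\mathcal{D}(A^{*})\times\mathcal{D}(A)\Leftrightarrow\mathcal{D}(S_{\lambda}^{*})=\mathcal{D}(A)$ a straightforward check in both directions.

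What each approach buys: the paper's proof is a two-line citation, appropriate since \cite{AJW} is a companion paper by the same authors devoted precisely to characterizing $J$-self-adjointness. Your approach is longer but self-contained, and it makes transparent \emph{why} the Schur-complement identities (2) and (3) encode exactly the missing domain equality; it is essentially a reconstruction of the cited theorem from \cite{AJW}.
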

\begin{proof}
By Lemma \ref{lem2.1}, we need to show that each one of the last two statements is equivalent to $JH=(JH)^*$,
but this follows from \cite[Theorem 3.1]{AJW}.
\end{proof}

There are three corollaries follow from Theorem \ref{th3.3} and the corresponding corollaries of \cite[Theorem 3.1]{AJW}.
\begin{corollary}
Suppose that $\mathcal{D}(H)=\mathcal{D}(A)\times\mathcal{D}(A^*)$, and that $\rho(A)\neq\emptyset$.
If $0\in\rho(A)\cup(\rho(B)\cap\rho(C))$,
then $H$ is invertible if one of the following holds:
\begin{enumerate}
\item $C$ is $A$-bounded with relative bound $0$,
\item $B$ is $A^*$-bounded with relative bound $0$.
\end{enumerate}
\end{corollary}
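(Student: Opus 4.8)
The plan is to deduce the Corollary from Theorem~\ref{th3.3} by a relative-boundedness argument, as announced in the sentence preceding the statement. Under the standing hypotheses $\mathcal{D}(H)=\mathcal{D}(A)\times\mathcal{D}(A^{*})$, $\rho(A)\neq\emptyset$ and $0\in\rho(A)\cup(\rho(B)\cap\rho(C))$, Theorem~\ref{th3.3} already reduces ``$H$ is invertible'' to the Schur-complement adjoint identity (Lemma~\ref{lem2.1} is built into that theorem, so ``bounded from below'' needs no further attention). It therefore suffices to verify, for some fixed $\lambda\in\rho(A)$, statement~(3) of Theorem~\ref{th3.3}, namely
\[
A+\overline{\lambda}+B(A^{*}-\overline{\lambda})^{-1}C=\bigl(A^{*}+\lambda+C(A-\lambda)^{-1}B\bigr)^{*}.
\]

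I would carry out case~(1), where $C$ is $A$-bounded with relative bound $0$; case~(2) follows by the symmetric argument (interchange $A$ and $A^{*}$ and $B$ and $C$, and use statement~(2) of Theorem~\ref{th3.3} instead). Fix $\lambda\in\rho(A)$. Because $\mathcal{D}(H)=\mathcal{D}(A)\times\mathcal{D}(A^{*})$ one has $\mathcal{D}(A)\subset\mathcal{D}(C)$ and $\mathcal{D}(A^{*})\subset\mathcal{D}(B)$, so the closed graph theorem makes $C$ $A$-bounded and $B$ $A^{*}$-bounded; consequently $C(A-\lambda)^{-1}$ and $B(A^{*}-\overline{\lambda})^{-1}$ are bounded, and $(A-\lambda)^{-1}B$, $(A^{*}-\overline{\lambda})^{-1}C$ extend to bounded operators on $X$. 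Put $T:=A^{*}+\lambda$ and $Q:=C(A-\lambda)^{-1}B$; then $\mathcal{D}(T+Q)=\mathcal{D}(A^{*})$ (since $\mathcal{D}(A^{*})\subset\mathcal{D}(B)$ and $(A-\lambda)^{-1}$ maps $\mathcal{D}(B)$ into $\mathcal{D}(A)\subset\mathcal{D}(C)$), the right-hand side of the displayed identity is $(T+Q)^{*}$, and the goal is to apply Lemma~\ref{lem2.2} to $T$ and $Q$.

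The crux is to show that $Q$ has relative bound $0$ with respect to $A^{*}$ (hence with respect to $T$) and that $Q^{*}=B(A^{*}-\overline{\lambda})^{-1}C$ has relative bound $0$ with respect to $A$ (hence with respect to $T^{*}$). The second claim is immediate, since $C$ has relative bound $0$ with respect to $A$ and $B(A^{*}-\overline{\lambda})^{-1}$ is bounded; the identification $Q^{*}=B(A^{*}-\overline{\lambda})^{-1}C$ is the routine computation $(UV)^{*}=V^{*}U^{*}$ for bounded $V$. For the first claim, given $y\in\mathcal{D}(A^{*})$ set $z:=(A-\lambda)^{-1}By\in\mathcal{D}(A)$, note $\|Az\|\le\|By\|+|\lambda|\,\|(A-\lambda)^{-1}B\|\,\|y\|$, and use the relative bound $0$ of $C$: for every $\varepsilon>0$,
\[
\|Qy\|=\|Cz\|\le\varepsilon\|Az\|+c_{\varepsilon}\|z\|\le\varepsilon\|By\|+M_{\varepsilon}\|y\|\le\varepsilon\,a\,\|A^{*}y\|+(\varepsilon b+M_{\varepsilon})\|y\|,
\]
where $\|By\|\le a\|A^{*}y\|+b\|y\|$; since $\varepsilon a$ is arbitrarily small, $Q$ has relative bound $0$ with respect to $A^{*}$.

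With both relative bounds equal to $0<1$, Lemma~\ref{lem2.2} gives that $T+Q$ is closed and $(T+Q)^{*}=T^{*}+Q^{*}=A+\overline{\lambda}+B(A^{*}-\overline{\lambda})^{-1}C$; this is precisely statement~(3) of Theorem~\ref{th3.3}, so $H$ is invertible. The step I expect to be the real obstacle --- and the only one that genuinely uses ``relative bound $0$'' rather than merely ``$<1$'' --- is the propagation of the relative bound through the resolvent displayed above: the small parameter $\varepsilon$ must end up in front of $\|Az\|$, which is dominated by $\|By\|$, while the large constant $c_{\varepsilon}$ multiplies only $\|z\|\lesssim\|y\|$. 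The remaining manipulations are the domain bookkeeping already contained in the relevant corollary of \cite[Theorem~3.1]{AJW}.
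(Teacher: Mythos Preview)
Your argument is correct and follows exactly the route the paper intends: the paper simply states that the corollary ``follows from Theorem~\ref{th3.3} and the corresponding corollaries of \cite[Theorem~3.1]{AJW}'', and what you have written is precisely the relative-boundedness verification (via the Hess--Kato Lemma~\ref{lem2.2}) that those corollaries in \cite{AJW} supply. One minor imprecision worth tightening: strictly speaking $Q^{*}$ may properly extend $B(A^{*}-\overline{\lambda})^{-1}C$, but since you only need $\mathcal{D}(A)\subset\mathcal{D}(Q^{*})$ and the action of $Q^{*}$ on $\mathcal{D}(A)$ (where it does agree with $B(A^{*}-\overline{\lambda})^{-1}C$), this does not affect the application of Lemma~\ref{lem2.2}.
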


\begin{corollary}
Suppose that $\mathcal{D}(H)=\mathcal{D}(A)\times\mathcal{D}(A^*)$,
and that $A$ or $-A$ is maximal accretive.
If $0\in\rho(A)\cup(\rho(B)\cap\rho(C))$,
then $H$ is invertible if one of the following holds:
\begin{enumerate}
\item $C$ is $A$-bounded with relative bound $< 1$ and $B$ is $A^*$-bounded with relative bound $\leq 1$,
\item $C$ is $A$-bounded with relative bound $\leq 1$ and $B$ is $A^*$-bounded with relative bound $< 1$.
\end{enumerate}
\end{corollary}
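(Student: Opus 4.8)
The plan is to reduce the statement, exactly as the previous corollary was reduced, to a $J$-self-adjointness assertion that is supplied by the appropriate corollary of \cite[Theorem 3.1]{AJW}. First I would note that if $A$ or $-A$ is maximal accretive, then an open half-plane is contained in $\rho(A)$; in particular $\rho(A)\neq\emptyset$, so together with $\mathcal{D}(H)=\mathcal{D}(A)\times\mathcal{D}(A^*)$ and $0\in\rho(A)\cup(\rho(B)\cap\rho(C))$ we are under the hypotheses of Theorem \ref{th3.3}. Hence $H$ is invertible if and only if statement (2) (equivalently (3)) of Theorem \ref{th3.3} holds, and, as in the proof of that theorem, statement (2)/(3) is equivalent to $JH=(JH)^*$ by \cite[Theorem 3.1]{AJW}. (Alternatively one may bypass Theorem \ref{th3.3}: by Lemma \ref{lem2.1} the hypothesis $0\in\rho(A)\cup(\rho(B)\cap\rho(C))$ makes $H$ bounded from below, so by Theorem \ref{th3.1} invertibility of $H$ is equivalent to $H$ being $J$-self-adjoint.) So the whole matter reduces to showing that hypothesis (1) or (2) forces $JH=(JH)^*$.

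For that last point I would invoke the Kato--Rellich/W\"ust-type corollary of \cite[Theorem 3.1]{AJW}. The mechanism I expect is the following: fix $\lambda$ in the half-plane on which $A$ (or $-A$) is boundedly invertible, so that $(A-\lambda)^{-1}$ and $(A^{*}-\overline{\lambda})^{-1}$ exist with $\|(A-\lambda)^{-1}\|\le 1/|{\rm Re}\,\lambda|$. Then $C(A-\lambda)^{-1}$ and $B(A^{*}-\overline{\lambda})^{-1}$ are bounded, and the two Schur-type operators in conditions (2) and (3) of Theorem \ref{th3.3}, namely $A^{*}+\lambda+C(A-\lambda)^{-1}B$ and $A+\overline{\lambda}+B(A^{*}-\overline{\lambda})^{-1}C$, are perturbations of $A^{*}$ and $A$ whose relative bounds are governed by the $A$-relative bound of $C$ and the $A^{*}$-relative bound of $B$. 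When one of these is $<1$ and the other is $\le 1$, the accretivity of $A$ (or $-A$) supplies exactly the extra room needed to conclude, by a perturbation argument in the spirit of the Kato--Rellich and W\"ust theorems (the W\"ust part, for relative bound $1$, being available because $A$ is accretive), that each of the two perturbed operators is closed and is the adjoint of the other. This is condition (2)/(3) of Theorem \ref{th3.3}, and hence $H$ is invertible.

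I expect the only real obstacle to be bookkeeping rather than mathematics: one must check that the relative-bound hypotheses (1)--(2) of the corollary are precisely those under which the relevant corollary of \cite[Theorem 3.1]{AJW} yields $JH=(JH)^*$, and that ``$A$ or $-A$ maximal accretive'' is the form of the accretivity assumption used there. Granting that, the proof is the short chain: maximal accretivity $\Rightarrow\rho(A)\neq\emptyset\Rightarrow$ Theorem \ref{th3.3} applies $\Rightarrow$ invertibility is equivalent to $JH=(JH)^*$; and the cited corollary of \cite[Theorem 3.1]{AJW} $\Rightarrow JH=(JH)^*$ under (1) or (2). One should also record that the role of $0\in\rho(A)\cup(\rho(B)\cap\rho(C))$ is solely to invoke Lemma \ref{lem2.1} for boundedness from below, consistent with how Theorem \ref{th3.1} is phrased.
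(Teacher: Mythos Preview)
Your proposal is correct and follows essentially the same route as the paper: the paper states that this corollary (together with the two neighbouring ones) follows from Theorem~\ref{th3.3} and the corresponding corollary of \cite[Theorem~3.1]{AJW}, which is exactly the chain you describe. Your extra paragraph sketching the Kato--Rellich/W\"ust mechanism behind the cited corollary is additional detail that the paper does not give, but it is consistent with the intended argument.
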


\begin{corollary}
Suppose that $\mathcal{D}(H)=\mathcal{D}(A)\times\mathcal{D}(A^*)$, and that $A$ is self-adjoint.
If $0\in\rho(A)\cup(\rho(B)\cap\rho(C))$,
then $H$ is invertible if one of the following holds:
\begin{enumerate}
\item $C$ is $A$-bounded with relative bound $< 1$ and $B$ is $A^*$-bounded with relative bound $\leq 1$,
\item $C$ is $A$-bounded with relative bound $\leq 1$ and $B$ is $A^*$-bounded with relative bound $< 1$.
\end{enumerate}
\end{corollary}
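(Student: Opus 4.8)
The plan is to obtain this corollary by combining Theorem~\ref{th3.3} with the known sufficient conditions for $J$-self-adjointness of $H$. First I would check that Theorem~\ref{th3.3} applies: since $A$ is self-adjoint, $\sigma(A)\subset\mathbb R$, so $\rho(A)\supset\mathbb C\setminus\mathbb R\neq\emptyset$, while the remaining hypotheses $\mathcal D(H)=\mathcal D(A)\times\mathcal D(A^*)$ and $0\in\rho(A)\cup(\rho(B)\cap\rho(C))$ are assumed in the statement. Hence, by Theorem~\ref{th3.3}, $H$ is invertible if and only if statement~(2) of that theorem holds, and, as shown in the proof of Theorem~\ref{th3.3} using \cite[Theorem 3.1]{AJW}, that statement is in turn equivalent to $JH=(JH)^*$.

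It therefore remains, in each of the two cases, to verify $JH=(JH)^*$. Since $A=A^*$ and $C$, $B$ are bounded relative to $A$ (so that $\mathcal D(C)\supset\mathcal D(A)$ and $\mathcal D(B)\supset\mathcal D(A^*)=\mathcal D(A)$), the operator $JH=\left(\begin{array}{cc} C & -A \\ -A & -B\end{array}\right)$ is a formally self-adjoint block operator matrix on $\mathcal D(A)\times\mathcal D(A)$. That it is in fact self-adjoint under the hypothesis ``$C$ is $A$-bounded with relative bound $<1$ and $B$ is $A^*$-bounded with relative bound $\le1$'' -- and, symmetrically, under the hypothesis with the roles of $B$ and $C$ interchanged -- is precisely the corollary of \cite[Theorem 3.1]{AJW} that treats self-adjoint $A$. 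Applying it in case~(1) and in case~(2) respectively, and combining with the first paragraph, completes the proof.

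The delicate point, were one to seek a self-contained argument instead of citing \cite{AJW}, is exactly this self-adjointness assertion at the borderline, where one of the two relative bounds equals $1$: the Kato--Rellich-type statement recorded as Lemma~\ref{lem2.2} requires both relative bounds to be strictly less than $1$, so it does not apply directly. One would decompose $JH=\left(\begin{array}{cc} 0 & -A \\ -A & 0\end{array}\right)+\left(\begin{array}{cc} C & 0 \\ 0 & -B\end{array}\right)$, with the first summand self-adjoint on $\mathcal D(A)\times\mathcal D(A)$, and then argue -- via a W\"ust-type essential-self-adjointness argument, using the strict inequality in the remaining relative bound to pin down the domain -- that the sum stays self-adjoint on $\mathcal D(A)\times\mathcal D(A)$. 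This is the step I expect to be the main obstacle.
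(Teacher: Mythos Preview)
Your proposal is correct and matches the paper's approach: the corollary is stated as following from Theorem~\ref{th3.3} together with the corresponding corollary of \cite[Theorem~3.1]{AJW}, which is exactly the route you take---reduce invertibility to $JH=(JH)^*$ via Theorem~\ref{th3.3}, then invoke the self-adjoint-$A$ corollary from \cite{AJW}. Your final paragraph on the borderline relative-bound case is an accurate diagnosis of why Lemma~\ref{lem2.2} alone is insufficient and the citation to \cite{AJW} is genuinely needed.
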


In the following two theorems,
we connect invertibility of $H$ to Fredholmness of its entry operators.
\begin{theorem}\label{th3.4}
$H$ is invertible if
\begin{enumerate}
\item $\rho(A)\cap i\mathbb R\neq\emptyset$ and $(A-i\lambda)^{-1}$ is a compact operator for some $i\lambda\in\rho(A)\cap i\mathbb R$,
\item $C$ is $A$-compact and $B$ is $A^*$-compact,
\item $\mathcal{N}(A)\cap\mathcal{N}(C)=\mathcal{N}(B)\cap\mathcal{N}(A^*)=\{0\}$.
\end{enumerate}
\end{theorem}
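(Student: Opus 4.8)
The plan is to deduce the statement from Theorem~\ref{th3.1}: I will show that $H$ is $J$-self-adjoint and bounded from below, and then invoke the implication $(3)\Rightarrow(1)$ of that theorem (hypothesis~(3) of the present statement supplies the kernel conditions needed along the way). Write $H=H_{0}+S$, where $H_{0}=\mathrm{diag}(A,-A^{*})$ and $S$ is the off-diagonal operator matrix with entry $B$ in the upper-right corner and $C$ in the lower-left corner. Fix $i\lambda\in\rho(A)\cap i\mathbb{R}$ with $(A-i\lambda)^{-1}$ compact, as in hypotheses~(1)--(2); then $i\lambda\in\rho(H_{0})$ and $-i\lambda\in\rho(H_{0}^{*})$, and $(H_{0}-i\lambda)^{-1}$, $(H_{0}^{*}+i\lambda)^{-1}$ are block-diagonal with compact diagonal entries. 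Because $C$ is $A$-compact and $B$ is $A^{*}$-compact, the operators $S(H_{0}-i\lambda)^{-1}$ and $S^{*}(H_{0}^{*}+i\lambda)^{-1}$ (whose only nonzero blocks are, up to sign, $C(A-i\lambda)^{-1}$ and $B(A^{*}+i\lambda)^{-1}$) are compact. Hence $S$ is $H_{0}$-bounded and $S^{*}$ is $H_{0}^{*}$-bounded, both with relative bound $0$; in particular $\mathcal{D}(H)=\mathcal{D}(A)\times\mathcal{D}(A^{*})$. Lemma~\ref{lem2.2} now gives that $H$ is closed with $H^{*}=S^{*}+H_{0}^{*}=\left(\begin{smallmatrix}A^{*}&C\\ B&-A\end{smallmatrix}\right)$, and a direct computation of $JH$ and $(JH)^{*}=H^{*}J^{*}$ shows $JH=(JH)^{*}$.

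It remains to prove that $H$ is bounded from below, and this is the crux. Suppose not; then there are $x^{(n)}=(x_{1}^{(n)},x_{2}^{(n)})^{t}\in\mathcal{D}(A)\times\mathcal{D}(A^{*})$ with $\|x^{(n)}\|=1$ and $Hx^{(n)}\to0$, i.e. $Ax_{1}^{(n)}+Bx_{2}^{(n)}\to0$ and $Cx_{1}^{(n)}-A^{*}x_{2}^{(n)}\to0$. Taking inner products exactly as in the proof of Theorem~\ref{th3.2} and using the nonnegativity of $B$ and $C$ gives $(Cx_{1}^{(n)},x_{1}^{(n)})\to0$ and $(Bx_{2}^{(n)},x_{2}^{(n)})\to0$. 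Now comes the step that replaces the boundedness of $B,C$ available in Theorem~\ref{th3.2}: since $C$ has $A$-bound $0$ and $B$ has $A^{*}$-bound $0$, estimating $\|Ax_{1}^{(n)}\|$ against $\|Bx_{2}^{(n)}\|$ (via the first component) and $\|A^{*}x_{2}^{(n)}\|$ against $\|Cx_{1}^{(n)}\|$ (via the second), then substituting one into the other, yields a bound of the form $(1-\varepsilon^{2})\|Ax_{1}^{(n)}\|\le \mathrm{const}+o(1)$ for a fixed $\varepsilon<1$, so that $\{Ax_{1}^{(n)}\}$ and likewise $\{A^{*}x_{2}^{(n)}\}$ are bounded. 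Consequently $\{(A-i\lambda)x_{1}^{(n)}\}$ and $\{(A^{*}+i\lambda)x_{2}^{(n)}\}$ are bounded, so compactness of $(A-i\lambda)^{-1}$, $C(A-i\lambda)^{-1}$, $(A^{*}+i\lambda)^{-1}$, $B(A^{*}+i\lambda)^{-1}$ lets us pass to a subsequence along which $x_{1}^{(n)}\to x_{1}$, $x_{2}^{(n)}\to x_{2}$, and $Cx_{1}^{(n)}$, $Bx_{2}^{(n)}$ converge. Closedness of $C$ together with $(Cx_{1}^{(n)},x_{1}^{(n)})\to0$ (and the fact that $(Cx_{1},x_{1})=0$ forces $Cx_{1}=0$ for $C\ge0$) yields $Cx_{1}=0$ and $Cx_{1}^{(n)}\to0$, and similarly $Bx_{2}=0$, $Bx_{2}^{(n)}\to0$; feeding this back into $Hx^{(n)}\to0$ gives $Ax_{1}^{(n)}\to0$, $A^{*}x_{2}^{(n)}\to0$, whence $Ax_{1}=0$, $A^{*}x_{2}=0$ by closedness of $A$, $A^{*}$. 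Thus $x_{1}\in\mathcal{N}(A)\cap\mathcal{N}(C)$ and $x_{2}\in\mathcal{N}(B)\cap\mathcal{N}(A^{*})$, so $x_{1}=x_{2}=0$ by hypothesis~(3), contradicting $1=\|x^{(n)}\|^{2}\to\|x_{1}\|^{2}+\|x_{2}\|^{2}$.

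Therefore $H$ is $J$-self-adjoint and bounded from below, and Theorem~\ref{th3.1} gives $0\in\rho(H)$, i.e. $H$ is invertible. The only genuinely delicate point is the bootstrap estimate in the second paragraph, which is what converts the relative-compactness hypotheses into the compactness needed to extract a convergent subsequence; everything else is bookkeeping with closed operators and the nonnegativity identities already exploited in Theorem~\ref{th3.2}. (Alternatively one could argue via Theorem~\ref{th3.1}(4): the factorization $H-i\lambda=(I+S(H_{0}-i\lambda)^{-1})(H_{0}-i\lambda)$ exhibits $H-i\lambda$ as Fredholm of index $0$, and since $A$ has compact resolvent the Fredholm essential spectrum of $H$ is empty, giving closed range of $H$; but the direct sequence argument above is more self-contained.)
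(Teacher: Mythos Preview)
Your argument is correct, but it takes a different route from the paper's. The paper invokes condition~(4) of Theorem~\ref{th3.1} rather than condition~(3): after establishing $JH=(JH)^{*}$ exactly as you do, it writes
\[
H=\left(\begin{array}{cc}A-i\lambda & 0\\ 0 & -A^{*}-i\lambda\end{array}\right)+\left(\begin{array}{cc}i\lambda & B\\ C & i\lambda\end{array}\right)=:T_{\lambda}+S_{\lambda},
\]
observes that $S_{\lambda}T_{\lambda}^{-1}$ is compact (its four blocks being $i\lambda(A-i\lambda)^{-1}$, $-B(A^{*}+i\lambda)^{-1}$, $C(A-i\lambda)^{-1}$, $-i\lambda((A-i\lambda)^{-1})^{*}$), and then cites Kato's stability theorem for Fredholm operators under relatively compact perturbations to conclude that $H$ is Fredholm, hence has closed range. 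Your parenthetical ``alternatively'' remark is essentially this argument. What you do instead is prove boundedness from below directly: the bootstrap estimate using the zero relative bounds to get $\{Ax_{1}^{(n)}\}$, $\{A^{*}x_{2}^{(n)}\}$ bounded, followed by the compactness extraction and the closedness arguments, is a genuine replacement for the Fredholm machinery. Your approach is more self-contained (no appeal to \cite[Theorem~IV~5.26]{Ka1980}) and makes explicit how the compact-resolvent hypothesis is actually used to manufacture the limit point; the paper's approach is shorter and isolates the structural reason (Fredholmness) more cleanly, which is why it transfers verbatim to Theorem~\ref{th3.5}.
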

\begin{proof}
It is sufficient to show $H$ is a $J$-self-adjoint operator with closed range.
We see from (2) that $C$ is $A$-bounded and $B$ is $A^*$-bounded with both relative bounds $0$,
so that $JH=(JH)^*$. Next we prove $H$ has a closed range.
Writing
\begin{equation*}
H=\left(
     \begin{array}{cc}
       A-i\lambda & 0 \\
       0 & -A^*-i\lambda \\
     \end{array}
   \right)+\left(
             \begin{array}{cc}
               i\lambda & B \\
               C & i\lambda \\
             \end{array}
           \right)
=:T_{\lambda}+S_{\lambda}.
\end{equation*}
It follows from the first two assumptions that $S_{\lambda}$ is $T_{\lambda}$-compact since
$$S_{\lambda}T_{\lambda}^{-1}=\left(
                                \begin{array}{cc}
                                  i\lambda(A-i\lambda)^{-1} & -B(A^*+i\lambda)^{-1} \\
                                  C(A-i\lambda)^{-1} & -i\lambda((A-i\lambda)^{-1})^* \\
                                \end{array}
                              \right)$$
is a compact operator, so that $H$ is Fredholm since $T_{\lambda}$ is Fredholm (see \cite[Theorem IV 5.26]{Ka1980}).
Hence $H$ has a closed range.
\end{proof}

\begin{theorem}\label{th3.5}
$H$ is invertible if
\begin{enumerate}
\item $A$ is Fredholm,
\item $C$ is $A$-compact and $B$ is $A^*$-compact,
\item $\mathcal{N}(A)\cap\mathcal{N}(C)=\mathcal{N}(B)\cap\mathcal{N}(A^*)=\{0\}$.
\end{enumerate}
\end{theorem}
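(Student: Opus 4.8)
The plan is to verify statement (4) of Theorem~\ref{th3.1}: I will show that $H$ is $J$-self-adjoint with closed range, so that hypothesis (3) supplies the remaining requirement $\mathcal{N}(A)\cap\mathcal{N}(C)=\mathcal{N}(B)\cap\mathcal{N}(A^*)=\{0\}$ and Theorem~\ref{th3.1} forces $H$ to be invertible. As in the proof of Theorem~\ref{th3.4}, the decomposition to use is $H=T+S$ with
\begin{equation*}
T=\left(\begin{array}{cc} A & 0 \\ 0 & -A^* \end{array}\right),\qquad S=\left(\begin{array}{cc} 0 & B \\ C & 0 \end{array}\right);
\end{equation*}
the difference from Theorem~\ref{th3.4} is that now $T$ need not be boundedly invertible, so the trick of exhibiting a compact operator $S_{\lambda}T_{\lambda}^{-1}$ is unavailable and one must argue directly from the sequential form of relative compactness.

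First I would record that, by (2), $C$ is $A$-compact and $B$ is $A^*$-compact, whence $\mathcal{D}(A)\subset\mathcal{D}(C)$ and $\mathcal{D}(A^*)\subset\mathcal{D}(B)$; consequently $\mathcal{D}(H)=\mathcal{D}(A)\times\mathcal{D}(A^*)=\mathcal{D}(T)\subset\mathcal{D}(S)$, so the splitting $H=T+S$ really is defined on $\mathcal{D}(H)$. Then I would check that $S$ is $T$-compact: if $(x^{(n)})=((x_1^{(n)}\ x_2^{(n)})^t)$ is bounded in the graph norm of $T$, then $(x_1^{(n)})$ and $(Ax_1^{(n)})$ are bounded and $(x_2^{(n)})$ and $(A^*x_2^{(n)})$ are bounded, so by $A$-compactness of $C$ and $A^*$-compactness of $B$ the sequences $(Cx_1^{(n)})$ and $(Bx_2^{(n)})$ have convergent subsequences, and hence so does $(Sx^{(n)})$. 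In particular $S$ is $T$-bounded with relative bound $0$; and running the same argument for $S^*$ and $T^*$ (using that $B$ and $C$ are self-adjoint, so that $S^*$ is again off-diagonal with entries $C$ and $B$, and $T^*=\mathrm{diag}(A^*,-A)$) shows that $S^*$ is $T^*$-bounded with relative bound $0$. Lemma~\ref{lem2.2} then gives that $H=T+S$ is closed with $H^*=T^*+S^*$, and a short computation using $J^*=-J$ and $J^2=-I$ yields $(JH)^*=-H^*J=JH$, i.e.\ $H$ is $J$-self-adjoint. (Alternatively, $JT$ and $JS$ are each self-adjoint and $JS$ is $JT$-bounded with relative bound $0$, so $JH=JT+JS$ is self-adjoint by the Kato--Rellich theorem.)

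Next I would invoke (1): since $A$ is Fredholm, so is $A^*$, and therefore $T=\mathrm{diag}(A,-A^*)$ is Fredholm. Because $S$ is $T$-compact, $H=T+S$ is Fredholm by \cite[Theorem IV 5.26]{Ka1980}, so in particular $\mathcal{R}(H)$ is closed. Together with the previous paragraph this shows $H$ is $J$-self-adjoint with closed range; combining with (3) we are exactly in case (4) of Theorem~\ref{th3.1}, hence $H$ is invertible.

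The one step I expect to need care is the verification that $S$ is $T$-compact when $T$ is only Fredholm rather than invertible: one must use the sequential definition of $A$-compactness and confirm it transfers to the block operators, and one must check at the outset that $\mathcal{D}(H)=\mathcal{D}(A)\times\mathcal{D}(A^*)$ so that the decomposition $H=T+S$ is legitimate. Once that is in place the proof is a direct adaptation of the argument for Theorem~\ref{th3.4}.
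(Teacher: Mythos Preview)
Your proposal is correct and follows essentially the same route as the paper: decompose $H=T+S$ with $T=\mathrm{diag}(A,-A^*)$ and $S$ off-diagonal, note that $T$ is Fredholm and $S$ is $T$-compact, and conclude via \cite[Theorem IV 5.26]{Ka1980} that $H$ is Fredholm with closed range, then apply Theorem~\ref{th3.1}(4). The paper's proof is terser---it begins ``We need to show $H$ has a closed range,'' tacitly carrying over the $J$-self-adjointness argument from the proof of Theorem~\ref{th3.4}---whereas you spell out both the sequential verification of $T$-compactness and the Lemma~\ref{lem2.2} step for $(JH)^*=JH$, which is entirely in keeping with the paper's method.
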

\begin{proof}
We need to show $H$ has a closed range.
Writing
\begin{equation*}
H=\left(
     \begin{array}{cc}
       A & 0 \\
       0 & -A^* \\
     \end{array}
   \right)+\left(
             \begin{array}{cc}
               0 & B \\
               C & 0 \\
             \end{array}
           \right)
=:T+S.
\end{equation*}
Then, $T$ is Fredholm by (1)
and $S$ is $T$-compact by (2).
Thus, $H$ is Fredholm (see \cite[Theorem IV 5.26]{Ka1980}),
so that it has a closed range.
\end{proof}

Now we consider the case $\mathcal{D}(H)=\mathcal{D}(C)\times\mathcal{D}(B)$.
The following is an analogue of Theorem \ref{th3.3}.
\begin{theorem}\label{th3.6}
Suppose that $\mathcal{D}(H)=\mathcal{D}(C)\times\mathcal{D}(B)$, and that $0\in\rho(B)\cap\rho(C)$.
Then the following statements are equivalent:
\begin{enumerate}
\item $H$ is invertible,
\item $C+\lambda+A^*(B-\lambda)^{-1}A=(C+\overline{\lambda}+A^*(B-\overline{\lambda})^{-1}A)^*$
for some (and hence for all) $\lambda\in\rho(B)$,
\item $B+\lambda+A(C-\lambda)^{-1}A^*=(B+\overline{\lambda}+A(C-\overline{\lambda})^{-1}A^*)^*$
for some (and hence for all) $\lambda\in\rho(C)$.
\end{enumerate}
\end{theorem}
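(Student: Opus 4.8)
The plan is to imitate the proof of Theorem \ref{th3.3}, with the roles of the diagonal entry $A$ and the off--diagonal entries $B,C$ interchanged (here it is $B$ and $C$, rather than $A$, that are boundedly invertible). Since $0\in\rho(B)\cap\rho(C)$, the first assertion of Lemma \ref{lem2.1} shows that $H$ is bounded from below, so by the equivalence $(1)\iff(3)$ in Theorem \ref{th3.1} the operator $H$ is invertible if and only if $H$ is $J$-self-adjoint, i.e. $JH=(JH)^{*}$; since $JHJ=\left(\begin{array}{cc}A^{*}&C\\ B&-A\end{array}\right)=:H^{[*]}$ (the formal adjoint of $H$), this is in turn the identity $H^{*}=H^{[*]}$. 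Thus it is enough to show that, for each admissible $\lambda$, statement (2) (resp. (3)) is equivalent to $H^{*}=H^{[*]}$; the ``for some, hence for all'' clause is then automatic because the latter equality does not involve $\lambda$.

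For (2), fix $\lambda\in\rho(B)$ and consider the operator matrix
$$H-\lambda J=\left(\begin{array}{cc}A&B-\lambda\\ C+\lambda&-A^{*}\end{array}\right),$$
whose $(1,2)$-entry $B-\lambda$ is boundedly invertible. Performing the Frobenius--Schur reduction of $H-\lambda J$ with $B-\lambda$ as pivot --- eliminating the second component using the first row and substituting into the second --- identifies $H-\lambda J$, up to bounded boundedly invertible block-triangular factors, with the Schur complement $G_{\lambda}:=C+\lambda+A^{*}(B-\lambda)^{-1}A$ on $\mathcal{D}(C)$; here $A^{*}(B-\lambda)^{-1}$ is everywhere defined and closed, hence bounded, and $\mathcal{D}(C)\subset\mathcal{D}(A)$, $\mathcal{D}(B)\subset\mathcal{D}(A^{*})$ because $\mathcal{D}(H)=\mathcal{D}(C)\times\mathcal{D}(B)$. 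Taking (formal and actual) adjoints commutes with this reduction, so $(H-\lambda J)^{*}$ equals the formal adjoint $(H-\lambda J)^{[*]}$ if and only if $G_{\lambda}=(G_{\lambda}^{[*]})^{*}$, where the formal adjoint of $G_{\lambda}$ is $G_{\lambda}^{[*]}=C+\overline{\lambda}+A^{*}(B-\overline{\lambda})^{-1}A$ --- that is, if and only if the identity in statement (2) holds. On the other hand $J$ is bounded with $J^{*}=-J=J^{[*]}$, so $(H-\lambda J)^{*}=H^{*}+\overline{\lambda}J$ and $(H-\lambda J)^{[*]}=H^{[*]}+\overline{\lambda}J$, whence $(H-\lambda J)^{*}=(H-\lambda J)^{[*]}$ is the same as $H^{*}=H^{[*]}$. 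This proves $(1)\iff(2)$. Statement (3) is obtained by the mirror argument applied to $H+\lambda J=\left(\begin{array}{cc}A&B+\lambda\\ C-\lambda&-A^{*}\end{array}\right)$ with $\lambda\in\rho(C)$, pivoting on the invertible $(2,1)$-entry $C-\lambda$; the Schur complement is then $B+\lambda+A(C-\lambda)^{-1}A^{*}$, and again $(H+\lambda J)^{*}=(H+\lambda J)^{[*]}$ reduces to $H^{*}=H^{[*]}$.

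The step I expect to be the main obstacle is the domain bookkeeping in this Frobenius--Schur reduction, since $B$, $C$ (and possibly $A$) are unbounded: although the outer block-triangular factors are bounded with bounded inverses, the coordinate change eliminating the second component involves the merely densely defined operator $(B-\lambda)^{-1}A$, so one must verify carefully that $G_{\lambda}$ is densely defined, that the reduction genuinely identifies $\mathcal{D}((H-\lambda J)^{*})$ with the set built from $\mathcal{D}(G_{\lambda}^{*})$, and that passing between ``$(H-\lambda J)^{*}=(H-\lambda J)^{[*]}$'' and the operator identity displayed in (2) is legitimate (this uses closedness of $G_{\lambda}$, and is precisely why statement (2), like statements (2) and (3) of Theorem \ref{th3.3}, is written with an adjoint on one side). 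The tidiest way to handle this uniformly is to establish, or invoke, the analogue of the Frobenius--Schur characterization \cite[Theorem 3.1]{AJW} used in the proof of Theorem \ref{th3.3}, now for operator matrices in which an off-diagonal entry is boundedly invertible, and to apply it to $H-\lambda J$ and $H+\lambda J$.
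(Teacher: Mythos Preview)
Your approach is correct and coincides with the paper's: reduce invertibility to $J$-self-adjointness via Lemma \ref{lem2.1}(1) and Theorem \ref{th3.1}(1)$\Leftrightarrow$(3), then identify (2) and (3) with $JH=(JH)^{*}$ by a Frobenius--Schur argument. The paper dispatches this last step in one line by citing \cite[Theorem 3.2]{AJW}; the ``analogue of \cite[Theorem 3.1]{AJW} for operator matrices in which an off-diagonal entry is boundedly invertible'' that you propose to establish or invoke is precisely that theorem, and your $H\mp\lambda J$ shift together with the Schur complement $G_{\lambda}$ is exactly how such a result is proved.
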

\begin{proof}
By Lemma \ref{lem2.1}, we need to show that each one of the last two statements is equivalent to $JH=(JH)^*$,
but this follows from \cite[Theorem 3.2]{AJW}.
\end{proof}
The following corollary follows from Theorem \ref{th3.6} and the corresponding corollary of \cite[Theorem 3.2]{AJW}.
\begin{corollary}\label{cor3.5}
Suppose that $\mathcal{D}(H)=\mathcal{D}(C)\times\mathcal{D}(B)$, and that $0\in\rho(B)\cap\rho(C)$.
Then $H$ is invertible if one of the following holds:
\begin{enumerate}
\item $A$ is $C$-bounded with relative bound $< 1$ and $A^*$ is $B$-bounded with relative bound $\leq 1$,
\item $A$ is $C$-bounded with relative bound $\leq 1$ and $A^*$ is $B$-bounded with relative bound $< 1$.
\end{enumerate}
\end{corollary}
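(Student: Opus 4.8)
The plan is to invoke Theorem~\ref{th3.6} to reduce the claim to a statement about $J$-self-adjointness, and then to obtain that statement from the relative-boundedness hypotheses by quoting the appropriate corollary of \cite[Theorem 3.2]{AJW}. Note first that, since $0\in\rho(B)\cap\rho(C)$, the first case of Lemma~\ref{lem2.1} shows that $H$ is automatically bounded from below; so by the equivalence of (1) and (3) in Theorem~\ref{th3.1} --- equivalently, by the argument in the proof of Theorem~\ref{th3.6}, which runs through Lemma~\ref{lem2.1} and Theorem~\ref{th3.1} --- the operator $H$ is invertible if and only if $JH=(JH)^*$. Hence it suffices to show that each of the hypotheses (1), (2) forces $JH=(JH)^*$ (equivalently, forces statement (2), and hence also (3), of Theorem~\ref{th3.6}).

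To see this I would use the decomposition, valid on $\mathcal{D}(C)\times\mathcal{D}(B)$,
\[
JH=\left(\begin{array}{cc} C & 0\\ 0 & -B\end{array}\right)+\left(\begin{array}{cc} 0 & -A^*\\ -A & 0\end{array}\right),
\]
which is meaningful because the $C$-boundedness of $A$ and the $B$-boundedness of $A^*$ give $\mathcal{D}(C)\subset\mathcal{D}(A)$ and $\mathcal{D}(B)\subset\mathcal{D}(A^*)$. Here the diagonal summand is self-adjoint, since $B$ and $C$ are self-adjoint; and hypotheses (1), (2) say exactly that the off-diagonal summand is relatively bounded with respect to it, with the two relative bounds (one $<1$, one $\leq 1$) being those of $A$ with respect to $C$ and of $A^*$ with respect to $B$. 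These are precisely the hypotheses of the corollary of \cite[Theorem 3.2]{AJW} which, in the configuration $\mathcal{D}(H)=\mathcal{D}(C)\times\mathcal{D}(B)$, plays the role played --- in the configuration $\mathcal{D}(H)=\mathcal{D}(A)\times\mathcal{D}(A^*)$ --- by the ``self-adjoint $A$'' corollary stated after Theorem~\ref{th3.3}; the point is that no extra hypothesis on $A$ is needed here, because the self-adjointness of the diagonal part is automatic for a Hamiltonian operator matrix. Quoting that corollary gives $JH=(JH)^*$, and Theorem~\ref{th3.6} then yields that $H$ is invertible.

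The only delicate point --- and the reason a bare application of Lemma~\ref{lem2.2} does not suffice --- is the borderline relative bound: in each of (1), (2) one of the two bounds is merely $\leq 1$. If both were $<1$, the off-diagonal perturbation above would have relative bound $<1$ with respect to the self-adjoint diagonal part, and Lemma~\ref{lem2.2} --- applied either to the displayed decomposition of $JH$, or equivalently to $H=\left(\begin{array}{cc}A&0\\0&-A^*\end{array}\right)+\left(\begin{array}{cc}0&B\\C&0\end{array}\right)$ --- would at once give $JH=(JH)^*$. The borderline value $\leq 1$ is handled inside \cite[Theorem 3.2]{AJW} by a W\"ust/KLMN-type perturbation argument that exploits the self-adjointness of $\left(\begin{array}{cc}C&0\\0&-B\end{array}\right)$, which is available for free precisely because $B$ and $C$ are self-adjoint. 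So the genuine content of the proof is just checking that hypotheses (1) and (2) match the hypotheses of that corollary; once that is done, Theorem~\ref{th3.6} closes the argument.
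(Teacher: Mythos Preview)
Your proposal is correct and follows essentially the same route as the paper: the paper's proof is the single sentence ``follows from Theorem~\ref{th3.6} and the corresponding corollary of \cite[Theorem 3.2]{AJW},'' and you have unpacked exactly this, reducing invertibility to $JH=(JH)^*$ via Lemma~\ref{lem2.1} and Theorem~\ref{th3.1} (which is the content of Theorem~\ref{th3.6}'s proof) and then quoting the cited corollary. Your additional commentary on the decomposition of $JH$ and on why the borderline relative bound $\leq 1$ requires more than Lemma~\ref{lem2.2} is helpful context but goes beyond what the paper records; the one small slip is the parenthetical ``or equivalently to $H=\left(\begin{smallmatrix}A&0\\0&-A^*\end{smallmatrix}\right)+\left(\begin{smallmatrix}0&B\\C&0\end{smallmatrix}\right)$,'' which is not the natural decomposition here since $\mathcal{D}(H)=\mathcal{D}(C)\times\mathcal{D}(B)$ and Lemma~\ref{lem2.2} would then also require $S^*$ to be $T^*$-bounded, i.e.\ $A^*$ to be $C$-bounded and $A$ to be $B$-bounded, which is not assumed.
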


\begin{remark}\label{rmk3.7}
``~$0\in\rho(B)\cap\rho(C)$" in the statement of Theorem \ref{th3.6} (Corollary \ref{cor3.5}, respectively) cannot be replaced by ``~$0\in\rho(A)\cup(\rho(B)\cap\rho(C))$",
see Example \ref{exam3.2}.
\end{remark}

The following analogues of Theorem \ref{th3.4} and Theorem \ref{th3.5} are proved in completely analogous ways.
\begin{theorem}\label{th3.7}
$H$ is invertible if
\begin{enumerate}
\item $(B-\lambda)^{-1}, (C-\lambda)^{-1}$ are compact operators for some $\lambda\in\rho(B)\cap\rho(C)$,
\item $A$ is $C$-compact and $A^*$ is $B$-compact,
\item $\mathcal{N}(A)\cap\mathcal{N}(C)=\mathcal{N}(B)\cap\mathcal{N}(A^*)=\{0\}$.
\end{enumerate}
\end{theorem}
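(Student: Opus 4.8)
The plan is to deduce the statement from the equivalence (1)$\Leftrightarrow$(4) of Theorem \ref{th3.1}: I will check that $H$ is $J$-self-adjoint, that $\mathcal{R}(H)$ is closed, and that $\mathcal{N}(A)\cap\mathcal{N}(C)=\mathcal{N}(B)\cap\mathcal{N}(A^*)=\{0\}$. The last property is exactly hypothesis (3), so everything reduces to the first two. Note first that, since $A$ is $C$-compact and $A^*$ is $B$-compact, $\mathcal{D}(C)\subset\mathcal{D}(A)$ and $\mathcal{D}(B)\subset\mathcal{D}(A^*)$, so $\mathcal{D}(H)=\mathcal{D}(C)\times\mathcal{D}(B)$.

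For $J$-self-adjointness I would run the argument behind Theorem \ref{th3.4} with the diagonal and anti-diagonal blocks of $JH$ exchanged. Write
\[
JH=\left(\begin{array}{cc} C & 0 \\ 0 & -B \end{array}\right)+\left(\begin{array}{cc} 0 & -A^* \\ -A & 0 \end{array}\right)=:T+S .
\]
Then $T$ is self-adjoint on $\mathcal{D}(C)\times\mathcal{D}(B)=\mathcal{D}(H)$, and $S$ is symmetric with $\mathcal{D}(S)=\mathcal{D}(A)\times\mathcal{D}(A^*)\supset\mathcal{D}(T)$. Since a relatively compact operator is relatively bounded with relative bound $0$, hypothesis (2) makes $S$ a $T$-bounded operator with relative bound $0$, and (because $S^*=S$ and $T^*=T$) makes $S^*$ a $T^*$-bounded operator with relative bound $0$; Lemma \ref{lem2.2} then gives $(JH)^*=(S+T)^*=S^*+T^*=S+T=JH$.

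For closedness of the range I would dualize the Fredholm argument of Theorem \ref{th3.4}. Pick $\lambda\in\rho(B)\cap\rho(C)$ as in (1) and split
\[
H=\left(\begin{array}{cc} 0 & B-\lambda \\ C-\lambda & 0 \end{array}\right)+\left(\begin{array}{cc} A & \lambda \\ \lambda & -A^* \end{array}\right)=:T_\lambda+S_\lambda .
\]
Here $T_\lambda$ is boundedly invertible, hence Fredholm, with inverse $\left(\begin{smallmatrix} 0 & (C-\lambda)^{-1} \\ (B-\lambda)^{-1} & 0\end{smallmatrix}\right)$, and
\[
S_\lambda T_\lambda^{-1}=\left(\begin{array}{cc} \lambda(B-\lambda)^{-1} & A(C-\lambda)^{-1} \\ -A^*(B-\lambda)^{-1} & \lambda(C-\lambda)^{-1}\end{array}\right)
\]
is compact: the diagonal blocks are compact by (1), and $A(C-\lambda)^{-1}$, $A^*(B-\lambda)^{-1}$ are compact because $A$ is $C$-compact and $A^*$ is $B$-compact. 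Thus $S_\lambda$ is $T_\lambda$-compact, so $H=T_\lambda+S_\lambda$ is Fredholm by \cite[Theorem IV 5.26]{Ka1980} and $\mathcal{R}(H)$ is closed. Together with (3), the equivalence (1)$\Leftrightarrow$(4) of Theorem \ref{th3.1} then yields that $H$ is invertible.

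I do not anticipate a real obstacle: once Theorem \ref{th3.1}, Lemma \ref{lem2.2}, and Kato's stability theorem for Fredholm operators are available, the proof is essentially a mirror image of that of Theorem \ref{th3.4}. The only points needing a little care are verifying that the block-operator identities $JH=T+S$ and $H=T_\lambda+S_\lambda$ hold on the correct domains (which is where $\mathcal{D}(H)=\mathcal{D}(C)\times\mathcal{D}(B)$ enters) and that the resolvent description of relative compactness is legitimate — it is, since (1) supplies closed operators $B$, $C$ with $\rho(B)\cap\rho(C)\neq\emptyset$.
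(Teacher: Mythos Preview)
Your proposal is correct and is exactly the ``completely analogous'' argument the paper has in mind: you verify condition (4) of Theorem \ref{th3.1} by dualizing the two steps of the proof of Theorem \ref{th3.4}, using Lemma \ref{lem2.2} for $J$-self-adjointness and the same Kato perturbation argument (with the roles of the diagonal and anti-diagonal blocks swapped) for the Fredholm property. Your claim $S^*=S$ is in fact correct, since $A$ is closed and densely defined so that $\left(\begin{smallmatrix}0 & -A^*\\ -A & 0\end{smallmatrix}\right)$ is self-adjoint on $\mathcal{D}(A)\times\mathcal{D}(A^*)$.
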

\begin{remark}
Theorem \ref{th1.4} is a corollary of Theorem \ref{th3.7} since
the assumptions of the former imply those of the latter.
\end{remark}

\begin{theorem}
$H$ is invertible if
\begin{enumerate}
\item $B, C$ are Fredholm,
\item $A$ is $C$-compact and $A^*$ is $B$-compact,
\item $\mathcal{N}(A)\cap\mathcal{N}(C)=\mathcal{N}(B)\cap\mathcal{N}(A^*)=\{0\}$.
\end{enumerate}
\end{theorem}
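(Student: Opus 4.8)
The plan is to derive the statement from Theorem~\ref{th3.1}: by the equivalence of (1) and (4) there it suffices to show that $H$ is $J$-self-adjoint with closed range, because hypothesis~(3) already guarantees $\mathcal{N}(A)\cap\mathcal{N}(C)=\mathcal{N}(B)\cap\mathcal{N}(A^*)=\{0\}$. Since we are in the case $\mathcal{D}(H)=\mathcal{D}(C)\times\mathcal{D}(B)$ and hypothesis~(2) forces $\mathcal{D}(C)\subseteq\mathcal{D}(A)$, $\mathcal{D}(B)\subseteq\mathcal{D}(A^*)$, I would use the splitting
$$H=\left(\begin{array}{cc} 0 & B \\ C & 0 \end{array}\right)+\left(\begin{array}{cc} A & 0 \\ 0 & -A^* \end{array}\right)=:T+S,\qquad \mathcal{D}(T)=\mathcal{D}(C)\times\mathcal{D}(B)\subseteq\mathcal{D}(S).$$
This is the decomposition from the proof of Theorem~\ref{th3.5} with the diagonal and off-diagonal parts interchanged so as to match the present domain hypothesis; here $T$ is closed and densely defined because $B$ and $C$ are.

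For the closed range I would argue as follows. Writing $P$ for the bounded invertible flip $(x_1,x_2)\mapsto(x_2,x_1)$ on $X\times X$, one has $T=P\,\mathrm{diag}(C,B)$, so $T$ is Fredholm by hypothesis~(1). Hypothesis~(2) then makes $S$ $T$-compact: if a sequence in $\mathcal{D}(T)$ is bounded in the graph norm of $T$, its first component is $C$-bounded and its second component is $B$-bounded, whence $A$ applied to the first and $A^*$ to the second produce relatively compact sequences, i.e. the image under $S$ is relatively compact. A $T$-compact operator is $T$-bounded with relative bound $0$, so $T+S$ is closed and, by \cite[Theorem IV~5.26]{Ka1980}, $H=T+S$ is Fredholm; in particular $\mathcal{R}(H)$ is closed.

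For $J$-self-adjointness I would imitate Proposition~\ref{prop3.1} and the proof of Theorem~\ref{th3.4} and appeal to Lemma~\ref{lem2.2}. From the previous step, $S$ is $T$-bounded with relative bound $0<1$; symmetrically $S^*=\mathrm{diag}(A^*,-A)$ and $T^*=\left(\begin{array}{cc} 0 & C \\ B & 0 \end{array}\right)$, and hypothesis~(2), read now as ``$A^*$ is $B$-compact and $A$ is $C$-compact'', shows $S^*$ is $T^*$-compact, hence $T^*$-bounded with relative bound $0<1$. Lemma~\ref{lem2.2} gives that $H=S+T$ is closed and
$$H^*=S^*+T^*=\left(\begin{array}{cc} A^* & C \\ B & -A \end{array}\right)=JHJ,$$
the two operators having the common domain $\mathcal{D}(B)\times\mathcal{D}(C)$ because $\mathcal{D}(B)\subseteq\mathcal{D}(A^*)$ and $\mathcal{D}(C)\subseteq\mathcal{D}(A)$. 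By the equivalences recorded just after the definition of a $\mathcal{J}$-self-adjoint operator, $H^*=JHJ$ is the same as $JH=(JH)^*$, so $H$ is $J$-self-adjoint. With the closed range established above and hypothesis~(3), Theorem~\ref{th3.1} now yields that $H$ is invertible.

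I do not expect a serious obstacle here: the proof is, as announced in the text, ``completely analogous'' to that of Theorem~\ref{th3.5}, with the additional closedness/adjoint bookkeeping borrowed from Proposition~\ref{prop3.1}. The only places that deserve care are (i) converting graph-norm boundedness of a sequence under $T$ (respectively $T^*$) into $C$- and $B$-boundedness of the appropriate components, so that the relative compactness in hypothesis~(2) can be invoked, and (ii) verifying that $\mathcal{D}(H^*)=\mathcal{D}(JHJ)$; both are immediate consequences of the domain inclusions contained in hypothesis~(2).
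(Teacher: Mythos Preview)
Your proposal is correct and follows precisely the ``completely analogous'' route the paper intends: you interchange the roles of the diagonal and off-diagonal blocks in the decomposition $H=T+S$ from Theorem~\ref{th3.5}, use Fredholmness of $B,C$ and relative compactness of $A,A^*$ to get $H$ Fredholm (hence closed range) via \cite[Theorem~IV~5.26]{Ka1980}, and obtain $J$-self-adjointness from Lemma~\ref{lem2.2} exactly as in Proposition~\ref{prop3.1}/Theorem~\ref{th3.4}. The care you took with the domains of $T^*$, $S^*$, and $JHJ$ is appropriate and the identifications are correct.
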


The following simple example shows that the assumption about nonnegativity of the Hamiltonian operator matrix
is essential for the main results in this section, e.g., Theorem \ref{th3.1} and Theorem \ref{th3.2}.
\begin{example}\label{exam3.1}
Consider the $2\times 2$ Hamiltonian matrix
$$H=\left(
      \begin{array}{cc}
        1 & 1 \\
        -1 & -1 \\
      \end{array}
    \right)$$
on the Hilbert space $\mathbb C\times\mathbb C$.
Obviously, $H$ is $J$-self-adjoint with closed range and
$$\mathcal{N}(A)\cap\mathcal{N}(C)=\mathcal{N}(B)\cap\mathcal{N}(A^*)=\{0\}.$$
Moreover, $\left(
      \begin{array}{c}
        A \\
        C \\
      \end{array}
    \right),
\left(
      \begin{array}{c}
        B \\
        -A^* \\
      \end{array}
    \right)$ are bounded from below.
But $0\in\sigma(H)$ since $det H=0$.
\end{example}

The following example indicates that the assumptions of many theorems cannot be relaxed, see Remark \ref{rmk3.5} and Remark \ref{rmk3.7}.
See also \cite{Deni} for another example.
\begin{example}\label{exam3.2}
Let $C$ be an unbounded self-adjoint operator on $X$ such that $(C-\gamma)$ is nonnegative for some
positive number $\gamma$. Noting that $0\in\rho(C)$ and $0\in\sigma_c(C^{-1})$.
For the nonnegative Hamiltonian operator matrix
$$H:=\left(
      \begin{array}{cc}
        I & C^{-1} \\
        C & -I \\
      \end{array}
    \right),
$$
we have
\begin{enumerate}
\item $JH=(JH)^*$,
\item $0\in\sigma_{app}(H)$ or, equivalently, $H$ is not bounded from below.
\end{enumerate}
In fact, the first assertion obviously holds since the diagonal entries of $H$ are bounded.
To prove the second assertion, we see from $\mathcal{R}(I|_{\mathcal{D}(C)}\  \  C^{-1})\subset\mathcal{D}(C)$
that $\mathcal{R}(H)\neq X\times X$, and so by Theorem \ref{th3.1} we conclude that $H$ is not bounded from below.
\end{example}
The following is an application to theory of elasticity.
For the definitions and properties of differential operators not given here,
see \cite{Nai1968, Weid1987}.
\begin{example}
Consider the rectangular thin plate bending problem with two opposite edges simply supported.
The governing equation in terms of displacement is
\begin{align}\label{eq3.4}
D(\frac{\partial^2}{\partial x^2}+\frac{\partial^2}{\partial y^2})^2w=0,\mbox{~for $0<x<h$ and $0<y<1$},
\end{align}
where $D>0$ is a constant,
the boundary conditions for simply supported edges are
\begin{align}\label{eq3.5}
w=0,\ \frac{\partial^2w}{\partial y^2}=0, \mbox{~for $y=0$ or $y=1$},
\end{align}
and the boundary conditions for the other two edges are
\begin{align}\label{eq3.6}
w,\frac{\partial w}{\partial y}=\mbox{given functions, for $x = 0$ or $x = h$},
\end{align}
see \cite[Section 8.1]{YZL}.
To obtain the analytical solution of the above boundary value problem,
the key step is rewriting \eqref{eq3.4} and \eqref{eq3.5} into an operator equation, see \cite{Zhong,ZZ}.
Specifically, we introduce the rotation $\theta$, the Lagrange parametric function $q$, and the moment $m$ as follows \cite{ZZ}.
\begin{align*}
\theta:=\frac{\partial w}{\partial x},\
q:=D(\frac{\partial^3w}{\partial x^3}+\frac{\partial^3w}{\partial x\partial y^2}),\
m:=-D(\frac{\partial\theta}{\partial x}+\frac{\partial^2w}{\partial y^2}).
\end{align*}
Then \eqref{eq3.4} and \eqref{eq3.5} becomes \cite{ZZ}
\begin{align}\label{eq3.7}
&\frac{\partial}{\partial x}\left(
                             \begin{array}{c}
                               w \\
                               \theta \\
                               q \\
                               m \\
                             \end{array}
                           \right)=
                           \left(
                             \begin{array}{cccc}
                               0 & 1 & 0 & 0 \\
                               -\frac{\partial^2}{\partial y^2} & 0 & 0 & -\frac{1}{D} \\
                               0 & 0 & 0 & \frac{\partial^2}{\partial y^2} \\
                               0 & 0 & -1 & 0 \\
                             \end{array}
                           \right)
                           \left(
                             \begin{array}{c}
                               w \\
                               \theta \\
                               q \\
                               m \\
                             \end{array}
                           \right),\\
\nonumber &w=m=0 \mbox{~for $y=0$ or $y=1$}.
\end{align}
Next we write \eqref{eq3.7} as an operator equation in a Hilbert space.
Let
\begin{align*}
\mathcal{D}(T):=&\{f\in L^2(0,\ 1)~|~f, f'\in AC[0,\ 1], \ f''\in L^2(0,\ 1),\ f(0)=f(1)=0\}, \\
Tf:=&-f'' \mbox{~for $f\in\mathcal{D}(T)$}.
\end{align*}
Then $T$ is a self-adjoint linear operator on the Hilbert space $L^2(0,\ 1)$ such that $(T-\pi^2)$ is nonnegative.
Let
$$A:=\left(
     \begin{array}{cc}
       0 & I \\
       T & 0 \\
     \end{array}
   \right), B:=\left(
                \begin{array}{cc}
                  0 & 0 \\
                  0 & -\frac{1}{D}I \\
                \end{array}
              \right).$$
Then \eqref{eq3.7} becomes $\dot{u}=Hu$, where
$$H:=\left(
     \begin{array}{cc}
       A & B \\
       0 & -A^* \\
     \end{array}
   \right)$$
and $u:=(w\  \theta \ q\  m)^t$, so that the spectral properties of the operator $H$ are essential for us to
get the analytical solution of the boundary value problem \eqref{eq3.4}, \eqref{eq3.5}, and \eqref{eq3.6}.
We claim that $i\mathbb R\subset\rho(H)$.
It is sufficient to show $i\mathbb R\subset\rho(-H)$.
Obviously, $H$ is a $J$-self-adjoint Hamiltonian operator matrix on the Hilbert space $(L^2(0,\ 1))^2\times(L^2(0,\ 1))^2$.
Note that $-H$ is a nonnegative Hamiltonian operator matrix,
by Corollary \ref{cor3.1} it is sufficient to show $i\mathbb R\subset\rho(-A)$ or, equivalently, $i\mathbb R\subset\rho(A)$.
Let $\lambda\in\mathbb R$. It follows from
$$A^2=\left(
        \begin{array}{cc}
          T & 0 \\
          0 & T \\
        \end{array}
      \right)$$
that $-\lambda^2\in\rho(A^2)$, so that
$(A-i\lambda)$ is a bijective since
$$A^2+\lambda^2=(A-i\lambda)(A+i\lambda)=(A+i\lambda)(A-i\lambda).$$
Hence $i\lambda\in\rho(A)$.
\end{example}

\section{proof of Theorem \ref{th3.1}}\label{sec4}

We define Pauli operator matrices \cite[p. 3]{Tha} on $X\times X$ as follows.
$$\sigma_0:=\left(
        \begin{array}{cc}
          I & 0 \\
          0 & I \\
        \end{array}
      \right), \sigma_1:=\left(
        \begin{array}{cc}
          0 & I \\
          I & 0 \\
        \end{array}
      \right), \sigma_2:=\left(
                     \begin{array}{cc}
                       0 & -iI \\
                       iI & 0 \\
                     \end{array}
                   \right), \sigma_3:=\left(
                                  \begin{array}{cc}
                                    I & 0 \\
                                    0 & -I \\
                                  \end{array}
                                \right).
$$
The Pauli operator matrices have the following properties.
\begin{align*}
&J=i\sigma_2,\\
&\sigma_k^*=\sigma_k^{-1}=\sigma_k, k=0,1,2,3,\\
&\sigma_1\sigma_2=i\sigma_3=-\sigma_2\sigma_1, \sigma_2\sigma_3=i\sigma_1=-\sigma_3\sigma_2, \sigma_3\sigma_1=i\sigma_2=-\sigma_1\sigma_3.
\end{align*}
Throughout of this section,
$$H:=\left(
       \begin{array}{cc}
         A & B \\
         C & -A^* \\
       \end{array}
     \right)$$
will denote a Hamiltonian operator matrix.

\begin{proposition}\label{prop4.1}
$H$ is nonnegative if and only if
$\sigma_k(i\varepsilon_{1k}H)\sigma_k^*$ is $\sigma_1$-dissipative for some $k=0,1,2,3$, where
\begin{align*}
\varepsilon_{1k}:=\left\{
                  \begin{array}{cc}
                      1, & \mbox{if $\sigma_1\sigma_k=\sigma_k\sigma_1$,} \\
                     -1, & \mbox{if $\sigma_1\sigma_k=-\sigma_k\sigma_1$.}
                  \end{array}
                  \right.
\end{align*}
\end{proposition}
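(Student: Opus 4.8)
The plan is to reduce the statement to a single fact: a Hamiltonian operator matrix $G=\bigl(\begin{smallmatrix}\mathcal A&\mathcal B\\ \mathcal C&-\mathcal A^*\end{smallmatrix}\bigr)$ is nonnegative if and only if $iG$ is $\sigma_1$-dissipative. To prove this I would reuse the computation behind the proof of Lemma~\ref{lem2.1}: for $x=(x_1\ x_2)^t\in\mathcal D(G)$ one gets ${\rm Im}(\sigma_1(iG)x,x)=(\mathcal Cx_1,x_1)+(\mathcal Bx_2,x_2)$, the only inputs being that $\mathcal B,\mathcal C$ are self-adjoint and that $(\mathcal Ax_1,x_2)-(\mathcal A^*x_2,x_1)$ is purely imaginary. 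Hence $iG$ is $\sigma_1$-dissipative iff $(\mathcal Cx_1,x_1)+(\mathcal Bx_2,x_2)\ge 0$ for all $x\in\mathcal D(G)$, and, $\mathcal D(G)$ being a product domain, one may set $x_2=0$ and then $x_1=0$; together with the self-adjointness of $\mathcal B$ and $\mathcal C$ this is equivalent to $\mathcal B\ge 0$ and $\mathcal C\ge 0$, i.e.\ to $G$ being nonnegative.

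Next I would check that Pauli conjugation carries $H$ to another Hamiltonian operator matrix whose off-diagonal entries are again $B$ and $C$, possibly interchanged, once the sign $\varepsilon_{1k}$ is taken into account. Using $\sigma_k^*=\sigma_k$, set $G_k:=\varepsilon_{1k}\sigma_kH\sigma_k$, so that $\sigma_k(i\varepsilon_{1k}H)\sigma_k^*=iG_k$. A direct block multiplication gives $G_0=H$, $G_1=\bigl(\begin{smallmatrix}-A^*&C\\ B&A\end{smallmatrix}\bigr)$, $G_2=\bigl(\begin{smallmatrix}A^*&C\\ B&-A\end{smallmatrix}\bigr)$ and $G_3=\bigl(\begin{smallmatrix}-A&B\\ C&A^*\end{smallmatrix}\bigr)$. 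In each case the bottom-right block is minus the adjoint of the top-left block, the two off-diagonal blocks are precisely $B$ and $C$ entering with a plus sign, and $\mathcal D(G_k)=\sigma_k\mathcal D(H)$ is dense because $\sigma_k$ is unitary; thus each $G_k$ is a Hamiltonian operator matrix, and it is nonnegative exactly when $B\ge 0$ and $C\ge 0$, that is, exactly when $H$ is nonnegative. The sole role of $\varepsilon_{1k}$ is that for $k=2,3$, where $\sigma_1\sigma_k=-\sigma_k\sigma_1$, the bare conjugation $\sigma_kH\sigma_k$ would leave the off-diagonal blocks with a minus sign, which $\varepsilon_{1k}=-1$ cancels.

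Combining the two steps completes the argument. If $H$ is nonnegative, then by the second step each $G_k$ is nonnegative, hence by the first step each $iG_k=\sigma_k(i\varepsilon_{1k}H)\sigma_k^*$ is $\sigma_1$-dissipative, so the required condition holds for some (indeed every) $k$. Conversely, if $\sigma_{k_0}(i\varepsilon_{1k_0}H)\sigma_{k_0}^*$ is $\sigma_1$-dissipative for some index $k_0$, the first step shows $G_{k_0}$ is nonnegative, so $B\ge 0$ and $C\ge 0$, and therefore $H$ is nonnegative.

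I expect the only genuinely delicate point to sit inside the first step: passing from the quadratic-form inequality $(\mathcal Cx_1,x_1)+(\mathcal Bx_2,x_2)\ge 0$ on $\mathcal D(G)$ to the operator inequalities $\mathcal B\ge 0$ and $\mathcal C\ge 0$ requires $\mathcal D(G)$ to split as a product of two subspaces, each of which is a core for the relevant off-diagonal operator, so that each summand can be isolated and the inequality then extended by closure; this is implicit in the standing conventions for Hamiltonian operator matrices but deserves to be spelled out. The Pauli computation in the second step is routine bookkeeping, the only hazard being to track the two commutation signs correctly and not to mismatch which block of $G_k$ plays the role of $\mathcal B$ and which the role of $\mathcal C$.
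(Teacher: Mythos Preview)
Your proposal is correct and follows essentially the same route as the paper: both rest on the identity ${\rm Im}(\sigma_1(iG)x,x)=(\mathcal Cx_1,x_1)+(\mathcal Bx_2,x_2)$ for a Hamiltonian block matrix $G$, the paper simply recording this identity for each of the four Pauli conjugates directly rather than first abstracting it as a lemma and then reducing via the explicit forms of $G_k$ as you do. Your extra caution about passing from the quadratic-form inequality on the product domain back to $B\ge 0$, $C\ge 0$ in the converse direction is well placed---the paper leaves this step implicit.
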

\begin{proof}
The assertions are immediate from
\begin{align*}
{\rm Im}(\sigma_1(iH)x,x)&=(Cx_1,x_1)+(Bx_2,x_2)\\
                         &={\rm Im}(\sigma_1\sigma_3(-iH)\sigma_3^*x,x) \mbox{~for~} x=(x_1\ x_2)^t\in\mathcal{D}(H),\\
{\rm Im}(\sigma_1\sigma_1(iH)\sigma_1^*x,x)&=(Cx_2,x_2)+(Bx_1,x_1)\\
                                          &={\rm Im}(\sigma_1\sigma_2(-iH)\sigma_2^*x,x) \mbox{~for~} x=(x_2\ x_1)^t\in\mathcal{D}(H).
\end{align*}
\end{proof}
\begin{remark}
It seems that the case $k=0$ of Proposition \ref{prop4.1} was first proved by Denisov \cite[Theorem 2]{Deni},
see also \cite[Lemma 4.8]{TW}.
\end{remark}

\begin{proposition}\label{prop4.2}
$\sigma_k(iH)\sigma_k^*$ is $\sigma_2$-symmetric for $k=0,1,2,3$.
\end{proposition}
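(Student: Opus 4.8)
The plan is to deduce the statement for all $k\in\{0,1,2,3\}$ from the single structural fact that $JH$ is symmetric, using the identity $J=i\sigma_2$ and the (anti)commutation relations of the Pauli operator matrices.

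First I would collect two routine facts. Since each $\sigma_k$ is bounded, everywhere defined, self-adjoint and invertible ($\sigma_k^*=\sigma_k^{-1}=\sigma_k$), the operator $T_k:=\sigma_k(iH)\sigma_k^*=\sigma_k(iH)\sigma_k$ is densely defined, $T_k^*=\sigma_k(iH)^*\sigma_k$, and composition on either side with $\sigma_k$ preserves operator inclusions. From $\sigma_1\sigma_2=-\sigma_2\sigma_1$, $\sigma_2\sigma_3=-\sigma_3\sigma_2$ and $\sigma_k^2=\sigma_0=I$ one obtains $\sigma_2\sigma_k=\varepsilon_k\sigma_k\sigma_2$ with $\varepsilon_k=1$ for $k=0,2$ and $\varepsilon_k=-1$ for $k=1,3$.

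Next I would settle the case $k=0$, i.e. show $iH$ is $\sigma_2$-symmetric. Because $J=i\sigma_2$ we have $\sigma_2(iH)=i\sigma_2H=JH$, and $JH\subset(JH)^*$ for every Hamiltonian operator matrix (for $x=(x_1\ x_2)^t,\,y=(y_1\ y_2)^t\in\mathcal{D}(H)$ one has $x_1,y_1\in\mathcal{D}(A)\cap\mathcal{D}(C)$, $x_2,y_2\in\mathcal{D}(B)\cap\mathcal{D}(A^*)$, and self-adjointness of $B,C$ yields $(JHx,y)=(x,JHy)$). Since $(JH)^*=(\sigma_2(iH))^*=(iH)^*\sigma_2$, multiplying $\sigma_2(iH)\subset(iH)^*\sigma_2$ on the right by $\sigma_2$ gives $\sigma_2(iH)\sigma_2\subset(iH)^*$, which by the equivalence $\mathcal{J}T\mathcal{J}\subset T^*\iff\mathcal{J}T\subset(\mathcal{J}T)^*$ recorded in the excerpt (with $\mathcal{J}=\sigma_2$, a unitary self-adjoint operator) says precisely that $iH$ is $\sigma_2$-symmetric.

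Finally, for general $k$ I would compute, using $\sigma_2\sigma_k=\varepsilon_k\sigma_k\sigma_2$ twice so that $\varepsilon_k^2=1$, and then the previous step together with the adjoint formula,
$$\sigma_2T_k\sigma_2=\sigma_2\sigma_k(iH)\sigma_k\sigma_2=\varepsilon_k^2\,\sigma_k[\sigma_2(iH)\sigma_2]\sigma_k\subset\sigma_k(iH)^*\sigma_k=T_k^*,$$
and then invoke once more the equivalence $\mathcal{J}T\mathcal{J}\subset T^*\iff\mathcal{J}T\subset(\mathcal{J}T)^*$ with $\mathcal{J}=\sigma_2$ and $T=T_k$ to conclude $\sigma_2T_k\subset(\sigma_2T_k)^*$, i.e. $T_k=\sigma_k(iH)\sigma_k^*$ is $\sigma_2$-symmetric. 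There is no genuinely hard step here; the only point requiring attention is the bookkeeping with adjoints of products involving the unbounded operator $iH$ — in particular verifying $(\sigma_k(iH)\sigma_k)^*=\sigma_k(iH)^*\sigma_k$ and keeping track of the $\pm1$ (anti)commutation signs when conjugating by $\sigma_k$.
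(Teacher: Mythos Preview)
Your proof is correct and follows essentially the same route as the paper: establish the $k=0$ case by verifying that $\sigma_2(iH)=JH$ is symmetric, and then deduce the remaining cases from this via the (anti)commutation relations of the Pauli matrices. The paper compresses this to two sentences, checking the quadratic form identity $(\sigma_2(iH)x,x)=(x,\sigma_2(iH)x)$ and declaring that ``other cases follow from this''; you have simply filled in the details of that last clause, including the adjoint bookkeeping for the conjugation by~$\sigma_k$.
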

\begin{proof}
One readily checks that
$(\sigma_2(iH)x,x)=(x,\sigma_2(iH)x)$ for all $x\in\mathcal{D}(H)$,
this proved the case $k=0$ and other cases follow from this.
\end{proof}
\begin{remark}
The case $k=0$ of Proposition \ref{prop4.2} is well-known.
\end{remark}

\begin{lemma}\label{lem4.1}
The following assertions hold.
\begin{enumerate}
\item Every $\mathcal{J}$-dissipative operator can be extended into a maximal one.
\item If $T$ is a maximal $\mathcal{J}$-dissipative operator with a dense domain, then $T$ is closed.
\item A densely defined operator $T$ is maximal $\mathcal{J}$-dissipative if and only if
$-T^c$ is maximal $\mathcal{J}$-dissipative, where $T^c=\mathcal{J}T^*\mathcal{J}$ is the $\mathcal{J}$-adjoint of the operator $T$.
\end{enumerate}
\end{lemma}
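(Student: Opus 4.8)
For (1) the plan is a routine Zorn's lemma argument. Fix a $\mathcal{J}$-dissipative operator and order the set of all its $\mathcal{J}$-dissipative extensions by graph inclusion. The defining inequality $\mathrm{Im}(\mathcal{J}Tx,x)\ge 0$ is checked pointwise on $\mathcal{D}(T)$, so the graph-union of any chain is again single-valued and $\mathcal{J}$-dissipative, hence an upper bound; a maximal element furnished by Zorn is then maximal $\mathcal{J}$-dissipative. For (2), let $T$ be maximal $\mathcal{J}$-dissipative with dense domain. I would show that the graph closure $\overline{\mathcal{G}(T)}$ is itself the graph of a ($\mathcal{J}$-dissipative) operator, so that maximality forces $\overline{\mathcal{G}(T)}=\mathcal{G}(T)$, i.e., $T$ is closed. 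Taking limits in $\mathrm{Im}(\mathcal{J}Tx_{n},x_{n})\ge 0$ shows $\mathrm{Im}(\mathcal{J}y,x)\ge 0$ for every $(x,y)\in\overline{\mathcal{G}(T)}$. If $(0,y)\in\overline{\mathcal{G}(T)}$, then $(z,Tz+ty)\in\overline{\mathcal{G}(T)}$ for every $z\in\mathcal{D}(T)$ and every $t\in\mathbb{R}$, so $\mathrm{Im}(\mathcal{J}Tz,z)+t\,\mathrm{Im}(\mathcal{J}y,z)\ge 0$ for all real $t$, forcing $\mathrm{Im}(\mathcal{J}y,z)=0$ on the dense set $\mathcal{D}(T)$, hence on $X$; replacing $z$ by $iz$ gives $\mathrm{Re}(\mathcal{J}y,z)=0$ as well, so $\mathcal{J}y=0$ and $y=0$. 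This proves the closure is single-valued.

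\textbf{Part (3).} The plan is to reduce to the classical duality for ordinary dissipative operators via the substitution $S:=\mathcal{J}T$. Since $\mathcal{J}$ is bounded and $\mathcal{D}(\mathcal{J}T)=\mathcal{D}(T)$, one has $\mathrm{Im}(\mathcal{J}Tx,x)=\mathrm{Im}((\mathcal{J}T)x,x)$, so $T$ is $\mathcal{J}$-dissipative iff $\mathcal{J}T$ is dissipative; moreover $T\mapsto\mathcal{J}T$ is an inclusion-preserving involution (its own inverse, since $\mathcal{J}^{2}=I$), hence it carries the maximal $\mathcal{J}$-dissipative operators precisely onto the maximal dissipative ones. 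Using $\mathcal{J}^{*}=\mathcal{J}$ and $\mathcal{J}^{2}=I$,
\begin{equation*}
\mathcal{J}(-T^{c})=-\mathcal{J}\mathcal{J}T^{*}\mathcal{J}=-T^{*}\mathcal{J}=-(\mathcal{J}T)^{*},
\end{equation*}
so by the same correspondence $-T^{c}$ is maximal $\mathcal{J}$-dissipative iff $-(\mathcal{J}T)^{*}$ is maximal dissipative. Thus, with $S=\mathcal{J}T$ (densely defined because $T$ is), the assertion is precisely the classical fact: \emph{a densely defined $S$ is maximal dissipative if and only if $-S^{*}$ is maximal dissipative}. I would prove this from the resolvent picture: a dissipative $S$ is bounded below on the open lower half-plane, and (Phillips) it is maximal dissipative exactly when $\mathcal{R}(S-\lambda)=X$ for one, hence all, $\lambda$ with $\mathrm{Im}\,\lambda<0$; then $(S-\lambda)^{-1}$ is bounded and everywhere defined, so $((S-\lambda)^{-1})^{*}=(S^{*}-\overline{\lambda})^{-1}$ is too, which --- together with part (2) for closedness --- yields the equivalence between $S$ and $-S^{*}$.

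\textbf{Where the difficulty lies.} Parts (1) and (2) are routine. The content is in (3), and the delicate step is the domain bookkeeping concealed in the Phillips criterion and in the passage among $T$, $\overline{T}$, $T^{*}$, $T^{**}$: one must verify that surjectivity of $S-\lambda$ for a single spectral parameter in the lower half-plane propagates to the whole half-plane, and that ``$-T^{c}$ maximal $\mathcal{J}$-dissipative'' is genuinely compatible with the standing hypothesis that $T$ is densely defined (this is where part (2), closedness of a densely defined maximal $\mathcal{J}$-dissipative operator, enters). I expect the construction of a proper dissipative extension when $\mathcal{R}(S-\lambda)\neq X$ to be the technical heart of the matter.
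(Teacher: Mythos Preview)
Your proposal is correct. The paper does not actually prove Lemma~\ref{lem4.1}; its entire proof reads ``See \cite[Section 2.2]{Azi}'', i.e., the three assertions are quoted as standard facts from Azizov--Iokhvidov's monograph on operators in indefinite inner product spaces.

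Your arguments for (1) and (2) are the expected ones and are sound; the only point worth tightening in (2) is to note explicitly that once the closure $\overline{T}$ is shown to be a well-defined operator, the limit inequality $\mathrm{Im}(\mathcal{J}\overline{T}x,x)\ge 0$ makes it a $\mathcal{J}$-dissipative extension of $T$, and then maximality gives $T=\overline{T}$. Your reduction in (3) via $S=\mathcal{J}T$ is exactly the right move: since $\mathcal{J}$ is a unitary involution, $T\mapsto \mathcal{J}T$ is an order-isomorphism carrying the $\mathcal{J}$-dissipative world onto the ordinary dissipative one, and your computation $\mathcal{J}(-T^{c})=-(\mathcal{J}T)^{*}$ reduces the claim to the classical duality ``$S$ maximal dissipative $\iff -S^{*}$ maximal dissipative''. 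This is precisely how the result is organized in \cite[Section 2.2]{Azi} (the surjectivity criterion $\mathcal{R}(\mathcal{J}T+i)=X$ that the paper itself invokes in Proposition~\ref{prop4.4} is exactly the Phillips characterization you sketch). The one subtlety you should make explicit in the converse direction is that, from $-S^{*}$ maximal dissipative and densely defined, part (2) gives $-S^{*}$ closed, hence $S^{**}=\overline{S}$ exists; the forward direction applied to $-S^{*}$ then yields that $S^{**}$ is maximal dissipative, and since a maximal dissipative operator has no proper dissipative extension while $S\subset S^{**}$ with $S$ dissipative, one still needs the Phillips/range criterion (or closedness of $S$ from the start) to conclude $S=S^{**}$. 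You flagged this bookkeeping yourself, so no gap---just fill it in.
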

\begin{proof}
See \cite[Section 2.2]{Azi}.
\end{proof}

\begin{proposition}\label{prop4.3}
Let $H$ be a nonnegative Hamiltonian operator matrix.
Then $iH$ is maximal $\sigma_1$-dissipative if and only if $iH$ is $\sigma_2$-self-adjoint or, equivalently, $H$ is $J$-self-adjoint.
\end{proposition}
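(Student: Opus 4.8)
The plan is to move everything to a standard setting by conjugating with Pauli matrices. Set $G:=\sigma_1 H$, so that $\mathcal{D}(G)=\mathcal{D}(H)$ is dense. Using $J=i\sigma_2$, $\sigma_1^2=I$ and $\sigma_3\sigma_1=i\sigma_2$, one checks the identities $\sigma_1(iH)=iG$ and $\sigma_2(iH)=i\sigma_2 H=\sigma_3 G=JH$. Since $\sigma_1$ and $\sigma_3$ are everywhere defined, bounded and boundedly invertible, the maps $T\mapsto\sigma_1 T$ and $T\mapsto -iT$ are inclusion-preserving bijections on the operators of $X\times X$; carried over the class of $\sigma_1$-dissipative extensions of $iH$ they show that ``$iH$ is maximal $\sigma_1$-dissipative'' is equivalent to ``$G$ is maximal accretive''. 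Likewise ``$iH$ is $\sigma_2$-symmetric'' becomes ``$\sigma_3 G\sigma_3\subseteq G^*$'' and ``$iH$ is $\sigma_2$-self-adjoint'' becomes ``$G^*=\sigma_3 G\sigma_3$'' -- and this last is plainly the same as ``$JH=(JH)^*$'', i.e.\ ``$H$ is $J$-self-adjoint''. Since $H$ is nonnegative, the case $k=0$ of Proposition~\ref{prop4.1} says $iH$ is $\sigma_1$-dissipative, i.e.\ $G$ is accretive, and the case $k=0$ of Proposition~\ref{prop4.2} says $iH$ is $\sigma_2$-symmetric, i.e.\ $\sigma_3 G\sigma_3\subseteq G^*$. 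So everything reduces to the following statement about $G$: \emph{for $G$ accretive with $\sigma_3 G\sigma_3\subseteq G^*$, $G$ is maximal accretive if and only if $G^*=\sigma_3 G\sigma_3$.}

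For the ``if'' part I would argue as follows. Assume $G^*=\sigma_3 G\sigma_3$. Then $G=\sigma_3 G^*\sigma_3$ is closed, and since $\sigma_3$ is unitary self-adjoint, conjugation by it leaves the numerical range unchanged, so $G^*$ is also accretive. Hence $G+I$ and $(G+I)^*=G^*+I$ are both bounded below (by $\|\cdot\|$), so $G+I$ is injective with closed range and $\mathcal{R}(G+I)^\perp=\mathcal{N}(G^*+I)=\{0\}$; thus $G+I$ is bijective. Now any accretive extension $\widetilde G\supseteq G$ gives $\widetilde G+I\supseteq G+I$ with $\widetilde G+I$ injective and $G+I$ already onto, forcing $\widetilde G=G$; hence $G$ is maximal accretive.

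For the ``only if'' part, assume $G$ is maximal accretive. The key observation will be that $G^{c}:=\sigma_3 G^*\sigma_3$ is an accretive extension of $G$: from $\sigma_3 G\sigma_3\subseteq G^*$ we get $G\subseteq\sigma_3 G^*\sigma_3=G^{c}$, while $G^*$ is accretive (this is classical for the adjoint of a maximal accretive operator, and may also be recovered from Lemma~\ref{lem4.1}(3) applied to $iH$ with $\mathcal{J}=\sigma_1$, translating $-(iH)^{c}$ back to $G^*$), whence $G^{c}$ is accretive by the same $\sigma_3$-conjugation. Since $G$ is maximal accretive and $G\subseteq G^{c}$ with $G^{c}$ accretive, we must have $G=G^{c}$, that is $G^*=\sigma_3 G\sigma_3$. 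Reading this back through the dictionary of the first paragraph finishes the proof.

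The work here is more organizational than mathematical: the one thing to watch is that ``maximal'' genuinely transfers along the chain of bijections between the $\sigma_1$-dissipative extensions of $iH$, the dissipative extensions of $iG$, and the accretive extensions of $G$. After the reduction to $G$, the argument is routine unbounded-operator theory -- closedness, bounded below, orthogonal complement of a closed range -- together with the single remark that $\sigma_3(\cdot)\sigma_3$ preserves accretivity.
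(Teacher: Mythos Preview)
Your proof is correct and, at its core, is the paper's argument after the change of variables $G=\sigma_1 H$. Both proofs hinge on the same two ingredients: the $\sigma_2$-symmetry of $iH$ from Proposition~\ref{prop4.2} (your $\sigma_3 G\sigma_3\subseteq G^*$) and the fact that the adjoint of a maximal dissipative/accretive operator is again dissipative/accretive (Lemma~\ref{lem4.1}(3)). The ``only if'' directions are then line-by-line the same: the paper's $\sigma_1$-dissipative extension $\sigma_3(iH)^c\sigma_3\supseteq iH$ is exactly your accretive extension $G^c=\sigma_3 G^*\sigma_3\supseteq G$, and maximality forces equality. For the ``if'' direction the paper picks an abstract maximal $\sigma_1$-dissipative extension of $iH$ and uses all three parts of Lemma~\ref{lem4.1} to collapse it back to $iH$; you instead exploit $G^*=\sigma_3 G\sigma_3$ directly to see that $G$ is closed with accretive adjoint, whence $G+I$ is bijective and no proper accretive extension exists. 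Your route is marginally more self-contained (it avoids invoking existence of maximal extensions), while the paper's route keeps everything phrased in the $\mathcal{J}$-dissipative language of \cite{Azi}; but the underlying mathematics is the same.
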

\begin{proof}
Assume $iH$ is maximal $\sigma_1$-dissipative.
Then $-(iH)^c$ is $\sigma_1$-dissipative by Lemma \ref{lem4.1},
and so $\sigma_3(iH)^c\sigma_3$ is $\sigma_1$-dissipative by Proposition \ref{prop4.1}.
Noting that $iH\subset\sigma_2(iH)^*\sigma_2$ by Proposition \ref{prop4.2} and that $(iH)^*=\sigma_1(iH)^c\sigma_1$,
we have $iH\subset\sigma_3(iH)^c\sigma_3$, and therefore
$iH=\sigma_3(iH)^c\sigma_3$ since $iH$ is maximal $\sigma_1$-dissipative.
Hence $iH=\sigma_2(iH)^*\sigma_2$.
Conversely, if $iH=\sigma_2(iH)^*\sigma_2$,
then $iH=\sigma_3(iH)^c\sigma_3$,
and so $-(iH)^c=-\sigma_3(iH)\sigma_3$ is $\sigma_1$-dissipative by Proposition \ref{prop4.1}.
Let $G$ be a maximal $\sigma_1$-dissipative extension of $iH$.
We get $-G^c\subset -(iH)^c$ since $iH\subset G$.
But $G^c$ is maximal $\sigma_1$-dissipative by Lemma \ref{lem4.1},
and so $-G^c=-(iH)^c$. Thus $G=iH$ since $G$ is closed by Lemma \ref{lem4.1}.
\end{proof}

\begin{proposition}\label{prop4.4}
Let $H$ be a nonnegative Hamiltonian operator matrix.
Then $H$ is $J$-self-adjoint if and only if $\mathcal{R}(H+\sigma_1)=X\times X$.
\end{proposition}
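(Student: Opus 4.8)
The plan is to reduce the statement, via Proposition \ref{prop4.3} (which gives, for nonnegative $H$, that ``$H$ is $J$-self-adjoint'' is equivalent to ``$iH$ is maximal $\sigma_1$-dissipative''), to the equivalence ``$iH$ is maximal $\sigma_1$-dissipative'' $\iff$ ``$\mathcal{R}(H+\sigma_1)=X\times X$'', and then to exploit how $\sigma_1$-dissipative operators behave under the bounded perturbation $i\sigma_1$. Since $H$ is nonnegative, the identity ${\rm Im}(\sigma_1(iH)x,x)=(Cx_1,x_1)+(Bx_2,x_2)\ge 0$ from the proof of Proposition \ref{prop4.1} shows that $iH$ is $\sigma_1$-dissipative. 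For \emph{any} $\sigma_1$-dissipative operator $T$ on $X\times X$ and every $x\in\mathcal{D}(T)$ one then gets, using that $\sigma_1$ is unitary and $\sigma_1^2=\sigma_0$,
$$\|(T+i\sigma_1)x\|\,\|x\|\ge|(\sigma_1(T+i\sigma_1)x,x)|\ge{\rm Im}((\sigma_1Tx,x)+i\|x\|^2)\ge\|x\|^2,$$
whence $\|(T+i\sigma_1)x\|\ge\|x\|$; thus $T+i\sigma_1$ is injective, and has closed range when $T$ is closed. In particular $\|(iH+i\sigma_1)x\|\ge\|x\|$ on $\mathcal{D}(H)$; moreover $iH+i\sigma_1=i(H+\sigma_1)$, so the conditions $\mathcal{R}(iH+i\sigma_1)=X\times X$ and $\mathcal{R}(H+\sigma_1)=X\times X$ are equivalent, and I will work with the former.

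For the ``if'' part I would assume $\mathcal{R}(H+\sigma_1)=X\times X$, use Lemma \ref{lem4.1}(1) to extend $iH$ to a maximal $\sigma_1$-dissipative operator $G$, and compare. Then $iH+i\sigma_1\subset G+i\sigma_1$, the operator $G+i\sigma_1$ is injective by the estimate above, and $\mathcal{R}(iH+i\sigma_1)=X\times X$. Hence, given $x\in\mathcal{D}(G)$, there is $x_0\in\mathcal{D}(H)$ with $(iH+i\sigma_1)x_0=(G+i\sigma_1)x$, and $(G+i\sigma_1)(x-x_0)=0$ forces $x=x_0\in\mathcal{D}(H)$. Thus $G=iH$, so $iH$ is maximal $\sigma_1$-dissipative, and Proposition \ref{prop4.3} yields that $H$ is $J$-self-adjoint.

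For the ``only if'' part I would assume $H$ is $J$-self-adjoint, so by Proposition \ref{prop4.3} $iH=\sigma_2(iH)^*\sigma_2$; the right-hand side is closed, hence $iH$ is closed, and conjugating this identity by $\sigma_2$ gives $(iH)^*=\sigma_2(iH)\sigma_2$. By the estimate above $\mathcal{R}(iH+i\sigma_1)$ is then closed, so it remains to show it is dense, i.e. that $\mathcal{N}((iH+i\sigma_1)^*)=\mathcal{N}((iH)^*-i\sigma_1)=\{0\}$, using $(i\sigma_1)^*=-i\sigma_1$. If $(iH)^*u=i\sigma_1u$, substituting $(iH)^*=\sigma_2(iH)\sigma_2$ and applying $\sigma_2$ gives $(iH)(\sigma_2u)=i\sigma_2\sigma_1u=\sigma_3u$ by $\sigma_2\sigma_1=-i\sigma_3$; writing $v:=\sigma_2u$ and using $\sigma_3\sigma_2=-i\sigma_1$ this becomes $(iH)v=-i\sigma_1v$, i.e. $(iH+i\sigma_1)v=0$, so $v=0$ by injectivity and $u=\sigma_2v=0$. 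Hence $\mathcal{R}(iH+i\sigma_1)=X\times X$, equivalently $\mathcal{R}(H+\sigma_1)=X\times X$.

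The step I expect to demand the most care is the bookkeeping in the ``only if'' direction: extracting both the closedness of $iH$ and the adjoint identity $(iH)^*=\sigma_2(iH)\sigma_2$ from Proposition \ref{prop4.3}, and correctly tracking the signs coming from $\sigma_2\sigma_1=-i\sigma_3$, $\sigma_3\sigma_2=-i\sigma_1$ and $(i\sigma_1)^*=-i\sigma_1$; one must also make sure the imaginary-part inequality in the displayed estimate is applied with the right shift $+i\|x\|^2$ (equivalently, that $\mu=-i$ is the spectral parameter matching the perturbation $i\sigma_1$). Everything else is routine once the bound $\|(T+i\sigma_1)x\|\ge\|x\|$ for $\sigma_1$-dissipative $T$ is in hand.
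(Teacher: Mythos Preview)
Your proof is correct and follows essentially the same route as the paper: both reduce the statement, via Proposition~\ref{prop4.3}, to the equivalence ``$iH$ is maximal $\sigma_1$-dissipative'' $\iff$ ``$\mathcal{R}(H+\sigma_1)=X\times X$''. The paper simply quotes this last equivalence from \cite[Section 2.2, Lemma~2.2]{Azi} (written there as $\mathcal{R}(\sigma_1(iH)+i)=X\times X$), whereas you supply a self-contained proof of it using the lower bound $\|(T+i\sigma_1)x\|\ge\|x\|$ for $\sigma_1$-dissipative $T$, Lemma~\ref{lem4.1}(1) for the ``if'' direction, and the Pauli-matrix adjoint computation for the ``only if'' direction; your argument thus reproves, in this special setting, the cited Azizov--Iokhvidov lemma.
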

\begin{proof}
By Lemma 2.2 in \cite[Section 2.2]{Azi},
$iH$ is maximal $\sigma_1$-dissipative if and only if $\mathcal{R}(\sigma_1(iH)+i)=X\times X$ or, equivalently,
$\mathcal{R}(H+\sigma_1)=X\times X$.
Thus, the assertion follows from Proposition \ref{prop4.3}.
\end{proof}

\begin{proposition}\label{prop4.5}
Let $H$ be a nonnegative Hamiltonian operator matrix.
Then
$$\mathcal{N}(H)=(\mathcal{N}(A)\cap\mathcal{N}(C))\times(\mathcal{N}(B)\cap\mathcal{N}(A^*)).$$
\end{proposition}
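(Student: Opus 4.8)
The plan is to prove the two set inclusions separately; the inclusion $\supseteq$ is a one-line verification, while $\subseteq$ carries the content. For $\supseteq$: if $x_1\in\mathcal N(A)\cap\mathcal N(C)$ and $x_2\in\mathcal N(B)\cap\mathcal N(A^*)$, then $x=(x_1\ x_2)^t$ lies in the domain of the matrix operator $H$ and $Hx=(Ax_1+Bx_2\ \ Cx_1-A^*x_2)^t=(0\ 0)^t$, so $x\in\mathcal N(H)$.

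For $\subseteq$, take $x=(x_1\ x_2)^t\in\mathcal N(H)$; since $H$ acts by the matrix formula, membership in $\mathcal D(H)$ forces $x_1\in\mathcal D(A)\cap\mathcal D(C)$ and $x_2\in\mathcal D(B)\cap\mathcal D(A^*)$, and
$$Ax_1+Bx_2=0,\qquad Cx_1-A^*x_2=0 .$$
The key step is the static analogue of the limiting computation already used in the proof of Theorem \ref{th3.2} (and going back to Denisov): pair the first equation with $x_2$ and the second with $x_1$, use $(Ax_1,x_2)=(x_1,A^*x_2)$ to cancel the off-diagonal terms, and add the resulting identities to obtain
$$(Cx_1,x_1)+(Bx_2,x_2)=0 .$$

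Now nonnegativity of $B$ and $C$ enters: both summands are $\geq 0$, hence both vanish, so $(Cx_1,x_1)=0$ and $(Bx_2,x_2)=0$. From $\|C^{1/2}x_1\|^2=(Cx_1,x_1)=0$ we get $C^{1/2}x_1=0$ and therefore $Cx_1=C^{1/2}(C^{1/2}x_1)=0$; similarly $Bx_2=0$. Substituting back into the two displayed equations yields $Ax_1=-Bx_2=0$ and $A^*x_2=Cx_1=0$, so $x_1\in\mathcal N(A)\cap\mathcal N(C)$ and $x_2\in\mathcal N(B)\cap\mathcal N(A^*)$, that is, $x\in(\mathcal N(A)\cap\mathcal N(C))\times(\mathcal N(B)\cap\mathcal N(A^*))$.

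I expect no genuine obstacle. The only point deserving a word is the implication $(Bx_2,x_2)=0\Rightarrow Bx_2=0$ for a nonnegative self-adjoint operator, which follows either from the square-root factorization used above or from Cauchy--Schwarz applied to the nonnegative sesquilinear form $(x,y)\mapsto(Bx,y)$ together with the density of $\mathcal D(B)$; everything else is the two-line inner-product manipulation that already appears in this paper.
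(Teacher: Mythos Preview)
Your proof is correct and follows essentially the same route as the paper: pair the two null-space equations with $x_2$ and $x_1$ respectively, add to obtain $(Cx_1,x_1)+(Bx_2,x_2)=0$, use nonnegativity to force $Cx_1=0$, $Bx_2=0$, and substitute back. The only cosmetic difference is that you spell out the square-root justification for $(Bx_2,x_2)=0\Rightarrow Bx_2=0$, which the paper leaves as a one-line remark.
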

\begin{proof}
Let $x=(x_1\ x_2)^t\in\mathcal{N}(H)$.
Then
\begin{align}\label{eq2.2}
Ax_1+Bx_2=0, \\
\nonumber Cx_1-A^*x_2=0,
\end{align}
and so
\begin{align*}
(Ax_1,x_2)+(Bx_2,x_2)=0,\\
(x_1,Cx_1)-(x_1,A^*x_2)=0,
\end{align*}
so that $(x_1,Cx_1)+(Bx_2,x_2)=0$.
Thus $Cx_1=0, Bx_2=0$ since both $B$ and $C$ are nonnegative self-adjoint operators.
Consequently $Ax_1=0, A^*x_2=0$ by \eqref{eq2.2}, and therefore
$x\in(\mathcal{N}(A)\cap\mathcal{N}(C))\times(\mathcal{N}(B)\cap\mathcal{N}(A^*))$.
Hence
$$\mathcal{N}(H)\subset(\mathcal{N}(A)\cap\mathcal{N}(C))\times(\mathcal{N}(B)\cap\mathcal{N}(A^*)).$$
The reverse inclusion is obvious.
\end{proof}

\emph{Proof of Theorem \ref{th3.1}.}
If $\mathcal{R}(H)=X\times X$, then $\mathcal{R}(JH)=X\times X$,
and so $JH$ is a self-adjoint operator with $0\in\rho(JH)$.
Thus (1), (2), and (3) are equivalent.
By Proposition \ref{prop4.3}, (3) is equivalent to (5), and (4) is equivalent to (6).
Finally, a closed operator is bounded from below if and only if it is injective and has a closed range
(see Theorem I.3.7 and Lemma IV.1.1 in \cite{Gold}),
and therefore (3) is equivalent to (4) by Proposition \ref{prop4.5}. \qed

{\textbf{Acknowledgment.}}~{\ This work was supported by
the Natural Science Foundation of China (Grant Nos. 11361034, 11371185, 11101200),
the Specialized Research Fund for the Doctoral Program of Higher Education of China (Grant No. 20111501110001),
the Major Subject of Natural Science Foundation of Inner Mongolia of China (Grant No. 2013ZD01),
and the Natural Science Foundation of Inner Mongolia of China (Grant No. 2012MS0105).}

\end{document}